\documentclass[12 pt]{article}
\usepackage{graphicx,mathtools,amsmath,amssymb,amsthm,enumerate, hyperref, xcolor,mathrsfs}
\usepackage{marvosym, fontawesome}
\usepackage[indent = 20 pt]{parskip}

\usepackage{url}
\usepackage{enumitem}

\newtheorem{theorem}{Theorem}[section]

\newtheorem{lemma}[theorem]{Lemma}
\newtheorem{proposition}[theorem]{Proposition}

\newtheorem{letterthm}{Theorem}
\newtheorem{lettercor}[letterthm]{Corollary}
\newtheorem{letterques}[letterthm]{Question}

\theoremstyle{definition}
\newtheorem{definition}[theorem]{Definition}

\newtheorem{example}[theorem]{Example}

\theoremstyle{remark}
\newtheorem{remark}[theorem]{Remark}

\usepackage{etoolbox}
\preto\theorem{\medskip}

\DeclareMathOperator{\rad}{rad}
\newcommand{\md}{\rm mod}

\DeclareMathOperator{\ad}{ad}

\newcommand{\Flow}{\text{Flow}_{\text{cts}}}
\newcommand{\cc}{\text{cc}}

\newcommand{\mr}{\mathfrak{r}}

\newcommand{\mg}{\mathfrak{g}}
\newcommand{\mv}{\mathfrak{v}}

\newcommand{\mh}{\mathfrak{h}}
\newcommand{\ms}{\mathfrak{s}}
\newcommand{\ma}{\mathfrak{a}}
\newcommand{\mk}{\mathfrak{k}}

\newcommand{\cK}{\mathcal{K}}
\newcommand{\cB}{\mathcal{B}}

\newcommand{\cF}{\mathcal{F}}

\newcommand{\cH}{\mathcal{H}}

\newcommand{\cR}{\mathcal{R}}

\newcommand{\cS}{\mathcal{S}}
\newcommand{\cU}{\mathcal{U}}

\newcommand{\C}{\ensuremath{\mathbb{C}}}
\newcommand{\N}{\ensuremath{\mathbb{N}}}
\newcommand{\F}{\ensuremath{\mathbb{F}}}
\newcommand{\R}{\ensuremath{\mathbb{R}}}
\newcommand{\T}{\ensuremath{\mathbb{T}}}
\newcommand{\Z}{\ensuremath{\mathbb{Z}}}

\newcommand{\Img}{\text{Im}}

\DeclareMathOperator{\MP}{mp}

\newcommand{\actson}{\curvearrowright}

\DeclareMathOperator{\id}{id}
\DeclareMathOperator{\Ad}{Ad}
\DeclareMathOperator{\Aut}{Aut}
\DeclareMathOperator{\Inn}{Inn}

\DeclareMathOperator{\Tr}{Tr}
\DeclareMathOperator{\at}{at}
\DeclareMathOperator{\ct}{ct}

\DeclareMathOperator{\Out}{Out}

\DeclareMathOperator{\SL}{SL}

\DeclareMathOperator{\GL}{GL}
\DeclareMathOperator{\reg}{reg}

\DeclareMathOperator{\Stab}{Stab}

\DeclareMathOperator{\Obs}{Obs}

\DeclareMathOperator{\supp}{supp}

\DeclareMathOperator{\cts}{cts}
\DeclareMathOperator{\ns}{ns}

\usepackage[doi=false, url =false , isbn = false,style=alphabetic,sorting=nyt, backend = biber, maxcitenames=50, maxalphanames = 5, maxnames=50]{biblatex}
\makeatletter
\def\thanks#1{\protected@xdef\@thanks{\@thanks
        \protect\footnotetext{#1}}}
\makeatother

\begin{document}

\title{Non-uniqueness of continuous trace-scaling flows on certain full factors}
\author{
  Soham Chakraborty\textsuperscript{1} \and
  Sergiu Novac\textsuperscript{2}
}

\date{\today}

\setlength{\parindent}{0em}

\maketitle 

\begingroup
\renewcommand\thefootnote{\arabic{footnote}}
\footnotetext[1]{\hspace{0 em} \faMapMarker : Départment de Mathématiques et Applications, École Normale Supérieure, 45 Rue d'Ulm, 75005 Paris, France. \Letter: \texttt{soham.chakraborty@ens.psl.eu}}

\footnotetext[2]{\hspace{0 em} \faMapMarker : Department Wiskunde, KU Leuven, Celestijnenlaan 200B, 3001 Leuven, Belgium.  \Letter: \texttt{sergiu.novac@kuleuven.be}}
\endgroup

\begin{abstract}\noindent
    We show that if $M$ is a full II$_\infty$ factor such that the fundamental group of a finite corner is $\R^*_+$ and $\Out(M)$ is a locally compact second countable group, then $M$ admits either a unique continuous trace scaling flow or a continuum of pairwise distinct ones, up to cocycle conjugacy. Equivalently, there is either a unique III$_1$ factor or a continuum of pairwise non-isomorphic III$_1$ factors with continuous core $M$. We apply this dichotomy result to factors constructed in \cite{Deprez12} and \cite{Chakraborty25} where $\Out(M)$ is a Lie group. We also give two independent constructions of full factors admitting a continuum of pairwise distinct trace-scaling flows where the outer automorphism group is unknown.
\end{abstract}

\section{Introduction}

After the development of Tomita-Takesaki theory, the works of Connes and Takesaki (\cite{Connes73}, \cite{Takesaki73}, \cite{Cones-Takesaki}) associated to every type III$_1$ factor $M$ its canonical \textit{non-commutative flow of weights}. This consists of the continuous core $c(M)$ which is a II$_\infty$ factor, together with a strongly continuous trace-scaling flow $\alpha: \R \actson c(M)$ such that the crossed product is again isomorphic to $M$. They show that the non-commutative flow of weights is in fact an isomorphism of categories between III$_1$ factors and II$_\infty$ factors with continuous trace-scaling flows. In particular, a given II$_\infty$ factor $B$ arises as a continuous core if and only if it admits a continuous trace-scaling flow. Moreover, two type III$_1$ factors $M$ and $N$ are isomorphic if and only if $c(M) = c(N)$ and the corresponding trace-scaling flows are cocycle conjugate. 

Existence and uniqueness (up to cocycle conjugacy) of a continuous trace-scaling flow on a II$_\infty$ factor are difficult problems in general. The first obstruction is the fundamental group of a corner. If $p$ is a finite trace projection in $B$, then existence of such a flow forces the fundamental group $\cF(pBp)$ to be $\R^*_+$. Conversely, $\cF(pBp) = \R^*_+$ implies that for all $t\in \R$, there is an automorphism of $B$ that scales the trace by $e^{-t}$. However this pointwise realization does not automatically provide a homomorphism $\R \rightarrow \Aut(B)$, and a fortiori does not provide a continuous trace-scaling flow. In fact it was shown in \cite{PopaVaes10} that there is a II$_1$ factor $N$ with $\cF(N) = \R^*_+$ but $N \otimes \cB(\cH)$ does not admit a continuous trace-scaling flow.

The first interesting case that depicts uniqueness is of the injective II$_\infty$ factor $R_{0,1}$ (which is unique by \cite{Connes76}). In the seminal work of Haagerup (\cite{Haagerup87}), it was shown that there is a unique injective type III$_1$ factor. Equivalently, there is a unique continuous trace-scaling flow on $R_{0,1}$. Of course one can use this to construct continuous trace-scaling flows on any II$_\infty$ McDuff factor. In fact it follows from \cite{Houdayer07} that certain McDuff II$_\infty$ factors even admit a continuum of pairwise non-cocycle conjugate continuous trace-scaling flows. This depicts a stark contrast from the uniqueness result of Haagerup.   

On the other hand for full factors, even existence is subtle. The first major development happened in \cite{Radulescu} and \cite{Radulescu92b}, where it was shown that $\cF(L(\F_\infty)) = \R^*_+$ and in fact $L(\F_\infty) \otimes \cB(\cH)$ admits a continuous trace-scaling flow. It is still open if this trace-scaling flow of R\u adulescu is the unique one in $L(\F_\infty) \otimes \cB(\cH)$. Dykema and R\u adulescu subsequently used free-product constructions with an unbounded semicircular element to produce new examples of II$_\infty$ factors admitting continuous trace-scaling flows in \cite{DykemaRadulescu}. In particular this applies to stabilizations of full II$_1$ factors without the Haagerup approximation property. We also mention here that the case of $L(\F_2) \otimes \cB(\cH)$ remains tied to the free group factor problem. A trace-scaling flow here would imply that $\cF(L(\F_2)) = \R^*_+$, which would imply that the free group factors are all isomorphic by the results of Dykema \cite{Dykema94} and R\u adulescu \cite{Radulescu94}.

For convenience of notation, let us denote the set of continuous trace-scaling flows on $M$ by $\Flow(M)$ and the equivalence relation given by cocycle conjugacy, by $\sim_{\cc}$. In this article, we investigate if there exists a full II$_\infty$ factor $M$ with prescribed cardinality of the set $\Flow(M)/\sim_{\cc}$. In particular, we construct full factors satisfying $|\Flow(M)/\sim_{\cc}| = 2^{\aleph_0}$. As usual we denote the outer automorphism group of a full factor $M$ by $\Out(M)$ and the fundamental group  of a II$_1$ factor $N$ by $\cF(N)$. The main result of this article is: 
\medskip
\begin{letterthm}
\label{Thm: main thm dichotomy}
 Suppose that $M$ is a full II$_\infty$ factor with $\cF(pMp) = \R^*_+$ for a finite trace projection $p \in M$. If $\Out(M)$ is a locally compact second countable group, then either $|\Flow(M)/ \sim_{\cc}| = 1$ or $|\Flow(M)/ \sim_{\cc}| = 2^{\aleph_0}$.
\end{letterthm}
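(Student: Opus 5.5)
Fix one flow $\alpha\in\Flow(M)$ (assuming $\Flow(M)\neq\emptyset$; if it is empty the statement is either vacuous or must be read as excluding this case). I will parametrize all continuous trace-scaling flows in terms of $\alpha$ and data in $\Out(M)$. Since $M$ is full, $\Out(M)$ is a Polish group. By hypothesis it is locally compact second countable. The trace-scaling character $\mathrm{mod}\colon \Out(M)\to\R^*_+$, sending $[\theta]$ to the scalar with $\tau\circ\theta=\mathrm{mod}(\theta)\tau$, is a continuous homomorphism. The hypothesis $\cF(pMp)=\R^*_+$ makes it surjective. Let $K=\ker(\mathrm{mod})=\Out(M)_{\tau}$, the trace-preserving outer automorphisms, which is a closed normal subgroup. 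Every continuous trace-scaling flow $\beta$ gives a continuous homomorphism $\bar\beta\colon\R\to\Out(M)$ with $\mathrm{mod}\circ\bar\beta(t)=e^{-t}$, that is, a continuous section of the extension $1\to K\to\Out(M)\to\R\to 1$.

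The first step is to show that cocycle conjugacy of flows is detected at the level of $\Out(M)$. If $\beta\sim_{\cc}\gamma$, then $\bar\gamma=\Ad([\theta])\circ\bar\beta$ for some $\theta\in\Aut(M)$. Conversely, if $\bar\gamma$ and $\bar\beta$ are conjugate in $\Out(M)$, I would aim to show the flows are cocycle conjugate. Writing $\gamma_t=\Ad(u_t)\circ\theta\beta_t\theta^{-1}$, I need to correct $u_t$ into a continuous cocycle. Fullness makes $\Inn(M)$ closed, so $\mathcal U(M)/\T\to\Inn(M)$ is a homeomorphism. A measurable-selection argument (standard for second countable groups) gives a Borel $u$ satisfying the cocycle identity up to scalars. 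The obstruction is then a Borel $2$-cocycle $\R\times\R\to\T$. Since $H^2(\R,\T)=0$ in Moore cohomology, it can be removed, and a measurable cocycle for $\R$ is automatically equal a.e. to a continuous one. I would also check the converse: every continuous section of the extension lifts to an actual flow, by the same $H^2$/$H^3$-vanishing argument for $\R$, or avoid this by only considering sections of the form $\bar\alpha$ twisted by $K$. This yields a bijection between $\Flow(M)/\sim_{\cc}$ and the set of continuous sections $\sigma\colon\R\to\Out(M)$ of $\mathrm{mod}$, up to conjugation by $\Out(M)$.

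Next I parametrize the sections. Any other section has the form $\sigma(t)=k_t\bar\alpha(t)$, where $t\mapsto k_t\in K$ is a continuous $1$-cocycle for the $\R$-action $\Ad\bar\alpha$ on $K$. Since $\Out(M)=K\rtimes_{\bar\alpha}\R$, this is a semidirect product of lcsc groups, and the space $\mathcal S$ of such sections with the topology of uniform convergence on compacts is Polish. The group $\Out(M)$ acts continuously on $\mathcal S$ by conjugation, so the equivalence relation is the orbit equivalence relation $E$ of a Polish (indeed lcsc) group action. Orbit equivalence relations of lcsc group actions are smooth or not smooth. In the non-smooth case there are $2^{\aleph_0}$ classes, by the Glimm--Effros dichotomy for lcsc groups. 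In the smooth case the count can be anything up to $2^{\aleph_0}$, so this alone does not give the dichotomy.

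The real content, and the step I expect to be the main obstacle, is to show that if there is more than one class then there are continuum many. The plan is to exploit the additional $\R$-structure. Given a section $\sigma=k\cdot\bar\alpha$ not conjugate to $\bar\alpha$, I would produce a one-parameter family of sections, for instance $\sigma_s(t)=\sigma(st)\bar\alpha((1-s)t)$ after a suitable normalization, or sections built by reparametrizing through the one-parameter subgroups $\bar\alpha$ and $\sigma$, when these commute modulo $K$. I would then show these are pairwise non-conjugate for $s$ in a continuum, using an invariant. A candidate invariant is the conjugacy class of the closed subgroup $\overline{\sigma(\R)}$ relative to $K$, or an orbit-closure/Connes-type invariant such as the set of $t$ for which $\sigma(t)$ is conjugate into $\bar\alpha(\R)$. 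If this direct construction fails, the fallback is the structure theory of lcsc groups. Passing to an open subgroup via van Dantzig or Gleason--Yamabe, I would reduce to the case where $K$ is, up to a compact normal subgroup, a Lie group. Sections of $K\rtimes\R$ then correspond to elements of the Lie algebra $\mathfrak k\oplus\R$ with nonzero $\R$-component. Conjugacy classes of such elements are orbits of the adjoint action on an affine hyperplane, which form either a single orbit or continuum many, by an Ad-invariant-polynomial or dimension argument for orbits of a connected group on a connected manifold. Making this reduction rigorous, including handling totally disconnected pieces and compact kernels, is where I expect most of the work.
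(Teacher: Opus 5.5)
\textbf{Gap: the dichotomy for conjugacy classes of splittings is not proved.} Your first step is exactly the paper's reduction and is fine. It identifies cocycle conjugacy classes of flows with $\Out(M)$-conjugacy classes of continuous splittings of $-\log(\mathrm{mod})$, using Borel selection, $H^2(\R,\T)=H^3(\R,\T)=0$, Sutherland's lifting of $\R$-kernels and automatic continuity.

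The real content of the theorem is the dichotomy for the conjugation action on splittings, and you leave it open.
\begin{itemize}
\item Your direct construction $\sigma_s(t)=\sigma(st)\bar\alpha((1-s)t)$ is not a homomorphism unless $\sigma(\R)$ and $\bar\alpha(\R)$ commute. No invariant is shown to separate the resulting splittings.
\item In the fallback, you assert that the $\Ad$-orbits on the affine hyperplane $\ma=X+\mk$ (with $\mk=\ker d\delta$) are one or continuum many ``by an Ad-invariant-polynomial or dimension argument''. That assertion is precisely what must be proved.
\item A dimension/Baire argument cannot prove it. Countably many orbits only yields an open orbit, and a connected group acting on a connected manifold can have finitely many orbits of different dimensions (e.g.\ $\SL(2,\R)$ on $\R^2$ has two).
\end{itemize}
You need the specific structure of this action. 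The paper argues as follows.
\begin{itemize}
\item If $\mk$ is not solvable, an invariant built from the Killing form of $\mk/\rad(\mk)$ separates a family $X+cH$, $c>0$.
\item If $\mk$ is solvable, let $\overline D$ be the derivation induced by $\ad X$ on $\mv=\mk/[\mk,\mk]$.
\item If $\overline D$ is not onto, the map $X+Y\mapsto [Y]+\overline D(\mv)$ is $\Ad(G)$-invariant. It takes continuum many values on $\{X+cY_0\}$.
\item If $\overline D$ is onto, every orbit meets $X+[\mk,\mk]$. Two points there are $G$-conjugate iff they are conjugate under the stabilizer of $X \bmod [\mk,\mk]$, whose Lie algebra is $\R X\oplus[\mk,\mk]$. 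One then inducts on the derived length of $\mk$; passing to identity components costs only a countable index.
\end{itemize}

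\textbf{Gap: the passage from lcsc groups to Lie groups.} A priori, two non-conjugate splittings could become conjugate in every Lie quotient $H/K_m$ given by Gleason--Yamabe. The paper handles this as follows.
\begin{itemize}
\item Assume fewer than $2^{\aleph_0}$ classes. Glimm--Effros then gives smoothness, so orbits are $G_\delta$.
\item Baire category gives $s'\notin\overline{H\cdot s}$. This forces the images of $s,s'$ to be non-conjugate in some $H/K_{m_0}$, since otherwise $h_m\cdot s\to s'$.
\item The Lie case then gives $2^{\aleph_0}$ orbits in $H/K_{m_0}$.
\item These pull back because the map $\mathfrak{S}(\delta|_H)\to\mathfrak{S}(\delta_{m_0})$ is equivariant and surjective (one-parameter subgroups lift through compact kernels), and $G/H$ is countable.
\end{itemize}

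\textbf{Gap: existence of a flow.} You may not assume $\Flow(M)\neq\emptyset$: the conclusion excludes the value $0$, so existence is part of the claim. It follows from $\mathfrak{S}(\delta)\neq\emptyset$ for every surjective $\delta$ on an lcsc group (the paper cites Hofmann--Wu--Yang), combined with the lifting argument you already sketched.
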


We prove Theorem \ref{Thm: main thm dichotomy} in three main steps. The first step is Proposition \ref{Prop: flows and splits} where we transform the problem into an abstract group theoretic problem. Let $\md: \Out(M) \rightarrow \R^*_+$ be the usual module homomorphism and $\delta = - \log(\md): \Out(M) \rightarrow \R$, we notice that every trace-scaling flow gives a continuous splitting of the map $\delta$. Abstractly, for a continuous surjective group homomorphism $\delta: G \rightarrow \R$, we denote the Polish space of all continuous splittings of $\delta$ by $\mathfrak{S}(\delta)$. Then $G$ acts continuously on $\mathfrak{S}(\delta)$ by conjugation. We show that when $G = \Out(M)$ for a full factor $M$ and $\delta$ is as above then there is a bijection between $\mathfrak{S}(\delta)/G$ and $\Flow(M)/\sim_{\cc}$. The main ingredient is Sutherland's results on Borel lifts of $\R$-kernels with trivial obstructions, as in \cite{Sutherland_cohomology}. Theorem \ref{Thm: main thm dichotomy} then follows from the following result, which is the main technical effort of this article: 
\medskip
\begin{letterthm}
\label{Main theorem: general lc group case}
    Let $G$ be a locally compact second countable group and $\delta: G \rightarrow \R$ be a continuous surjective homomorphism. Let $\mathfrak{S}(\delta)$ be the Polish space of all continuous splittings of $\delta$ and consider the continuous action $G \actson \mathfrak{S}(\delta)$ by conjugation (see Lemma \ref{Lemma: S(delta) Polish space and G action continuous} for continuity). Then $|\mathfrak{S}(\delta)/G| = 1$ or $2^{\aleph_0}$. 
\end{letterthm}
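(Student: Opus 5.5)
The plan is to reduce the count of $G$-orbits to a descriptive set theoretic dichotomy, and then rule out the intermediate case of countably many (but at least two) orbits by a separate, more structural argument.

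\textbf{Step 1: cocycle reformulation.} If $\mathfrak{S}(\delta)=\emptyset$ there is nothing to prove, so fix a splitting $s_0\in\mathfrak{S}(\delta)$. Put $K=\ker\delta$, a closed normal subgroup, so that $G=K\rtimes_{\alpha}\R$ with $\alpha_t=\Ad s_0(t)|_K$. Every other splitting can be written uniquely as $s(t)=c(t)s_0(t)$, where $c:\R\to K$ is a continuous $1$-cocycle: $c(t+r)=c(t)\alpha_t(c(r))$. Conjugation by $k\in K$ replaces $c$ by a cohomologous cocycle. Conjugation by $s_0(r)$ replaces $c$ by $\alpha_r\circ c$. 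Hence
\[
\mathfrak{S}(\delta)/G\cong H^1_{\alpha}(\R,K)/\R .
\]
This picture will be used to produce deformations in Step 3.

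\textbf{Step 2: countable or continuum.} The orbit equivalence relation of a continuous action of a locally compact second countable group on the Polish space $\mathfrak{S}(\delta)$ (Lemma \ref{Lemma: S(delta) Polish space and G action continuous}) is Borel; in fact it is essentially countable, by the Kechris theorem via a countable complete section. Silver's dichotomy then gives that $\mathfrak{S}(\delta)/G$ is either countable or of size $2^{\aleph_0}$. It remains to exclude $2\le|\mathfrak{S}(\delta)/G|\le\aleph_0$.

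\textbf{Step 3: excluding countably many orbits.} Suppose there are only countably many orbits. Each orbit $G\cdot s$ is the continuous image of the $\sigma$-compact group $G$, hence $\sigma$-compact and in particular $F_\sigma$. By the Baire category theorem some orbit has nonempty interior. Since the orbit is $G$-invariant and $G$ acts by homeomorphisms, its interior is a nonempty open $G$-invariant set. An Effros-type argument (the orbit map $G/\Stab(s)\to G\cdot s$ is a homeomorphism onto a $G_\delta$ once the orbit is non-meager in itself) then shows this orbit is open. Applying the same reasoning to the countably many remaining orbits, after removing open ones and using that the complement of the union of open orbits is closed and again a countable union of orbits, one gets that every orbit is open, hence also closed.

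So it suffices to show that $\mathfrak{S}(\delta)$ is connected, or at least that every orbit meets the closure of every other one. For this I would try to deform a cocycle $c$ to the trivial cocycle through continuous cocycles. The first candidate is the rescaled family $c_u(t)=c(ut)$, which is a cocycle for the rescaled action $t\mapsto\alpha_{ut}$ rather than for $\alpha$. To correct this, I would compose with the $G$-action: use that $s(ut)$ is a one-parameter subgroup with $\delta(s(ut))=ut$, and multiply by $s_0((1-u)t)$ after conjugating into a common one-parameter family. Concretely, I would look for a continuous path $u\mapsto s_u\in\mathfrak{S}(\delta)$, $u\in[0,1]$, with $s_1=s$ and $s_0$ in the orbit of the base point, built from the local structure of $G$ near $s(\R)s_0(\R)$.

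The main obstacle is this connectivity step. The obvious homotopies are not homomorphisms unless $s$ and $s_0$ commute. The fallback is a different argument: show that an open orbit forces the stabiliser $Z_G(s(\R))$ to be cocompact modulo $s(\R)$-data, and derive a contradiction from a second open orbit using the continuous surjection $\delta$. This would apply Baire category in $G$ to the open set $\{g: gsg^{-1}\in U\}$ for a neighbourhood $U$ of $s$.
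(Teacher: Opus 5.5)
\textbf{There is a genuine gap.} It sits where the content of the statement lies: Step~3 does not exclude $2\le|\mathfrak{S}(\delta)/G|\le\aleph_0$.

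Your inference ``countably many orbits $\Rightarrow$ every orbit is open'' is invalid. After removing the open orbits, Baire category in the closed remainder $F$ only gives an orbit that is open \emph{relative to} $F$, not open in $\mathfrak{S}(\delta)$. The multiplication action $\R^*_+\actson\R$ shows this: it has three orbits, and $\{0\}$ is not open. Yet it has every feature your argument uses: a locally compact second countable group, a continuous action, a connected Polish space, and $\sigma$-compact orbits.

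Your alternative target, that every orbit meets the closure of every other one, would suffice, since an open orbit meeting $\overline{G\cdot s'}$ must meet $G\cdot s'$. But it also fails in that example. So it has to come from the specific structure of splittings, and this is exactly what the proposal leaves open. You flag the deformation as the main obstacle yourself, and the stabiliser fallback is only a sketch.

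A smaller point: $\mathfrak{S}(\delta)=\emptyset$ is not ``nothing to prove'', since $0$ orbits would contradict the statement. The paper uses a lemma of Hofmann--Wu--Yang for non-emptiness; for Lie groups, $t\mapsto\exp(tX)$ with $d\delta(X)=1$ suffices. Your Step~2 is fine and plays the role of the paper's Glimm--Effros step.

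\textbf{The missing idea is a reduction to Lie groups followed by a Lie-algebraic computation.}

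First, the paper assumes fewer than $2^{\aleph_0}$ orbits, so orbits are $G_\delta$. Gleason--Yamabe gives an open subgroup $H$ and compact normal subgroups $K_m$ of $H$ with $\bigcap_m K_m=\{e\}$ and each $H/K_m$ a Lie group. If $H$ has two orbits, Baire gives $s,s'$ with $s'\notin\overline{H\cdot s}$. This non-density forces some Lie quotient $H/K_{m_0}$ to fail to conjugate the images of $s$ and $s'$.

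Second, in the Lie case $\mathfrak{S}(\delta)$ is $G$-equivariantly the affine hyperplane $\ma=X+\mk$ with the $\Ad$-action. Its orbits are controlled by explicit invariants:
\begin{itemize}
\item the Killing form on the semisimple quotient of $\mk$, when $\mk$ is not solvable;
\item the class in $\mv/\overline{D}(\mv)$, when $\overline{D}$ is not onto. This is invariant because $\Ad(\exp W)$ translates by $-D(W)$ modulo $\mk^{(2)}$, and $\exp(t\overline{D})$ acts trivially modulo $\overline{D}(\mv)$;
\item induction on the derived length of $\mk$, when $\overline{D}$ is onto.
\end{itemize}
These translations by elements of $\mk$ are what rule out scaling-type orbit pictures like $\R^*_+\actson\R$.

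Your cocycle picture can see the first instance of this. For $K$ a vector group $V$ with $\alpha_t=e^{tD}$, differentiating cocycles at $0$ gives $H^1_\alpha(\R,V)/\R\cong V/DV$, which is either a point or a continuum. But you need such a computation in general, together with a reduction from locally compact to Lie groups, and both are absent from the proposal.
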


We first prove Theorem \ref{Main theorem: general lc group case} when $G$ is a Lie group. In Proposition \ref{Prop: number of orbits of a} we transform this into a Lie algebraic problem. Indeed let $\delta: G \rightarrow \R$ as before and let $\mg$ be the Lie algebra of $G$. Let $\mk$ be the kernel of the differential $d\delta$ and $\ma$ be $d\delta^{-1}(\{1\})$, which is an affine subspace of $\mg$. The affine hyperplane $\ma$ is invariant under the $\Ad$-action of $G$ and we show that there is a bijection between $\mathfrak{S}(\delta)$ and $\ma$ that is $G$-equivariant with respect to the conjugation action and $\Ad$ respectively. The next step is to use several tedious but standard Lie algebraic techniques to show that if $|\ma/G| \geq 1$, then either $|\ma/G| = 1$ or $|\ma/G| = 2^{\aleph_0}$. Finally we deduce the theorem for all locally compact groups from the Lie group case by an application of the Gleason-Yamabe theorem. We apply Theorem \ref{Thm: main thm dichotomy} to the full II$_\infty$ factors constructed in \cite{Deprez12} and \cite{Chakraborty25} where the outer automorphism groups are in fact Lie groups. In Propositions \ref{Prop: Chakraborty construction} and \ref{Prop: Deprez construction}, we show: 
\medskip
\begin{lettercor}
\label{Corr: main corollary}
    Let $M_C = L(\cR_C)$ be the full II$_\infty$ factor constructed in \cite[Definition 4.1]{Chakraborty25} and for $n \geq 1$, let $M_n$ be the full II$_\infty$ factors which are amplifications of the II$_1$ factors constructed in \cite[Theorem 8.4]{Deprez12}. Then we get: 
    \begin{enumerate}
        \item $\Out(M_C) = \R$ and $|\Flow(M_C)/\sim_{\cc}| = 1$,
        \item $\Out(M_n) = \GL(n,\R)$ and $|\Flow(M_n)/\sim_{\cc}| = 1$ for $n =1$ and $2^{\aleph_0}$ for $n \geq 2$. 
    \end{enumerate}
\end{lettercor}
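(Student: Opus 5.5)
The plan is to reduce, for each family, to the dichotomy machinery: by Proposition \ref{Prop: flows and splits}, $\Flow(M)/\sim_{\cc}$ is in bijection with $\mathfrak{S}(\delta)/\Out(M)$. When $\Out(M)$ is a Lie group, Proposition \ref{Prop: number of orbits of a} further identifies this with the orbit space $\ma/G$ of the affine hyperplane $\ma = d\delta^{-1}(\{1\})$ under $\Ad$. So in each case the work is (i) identify $\Out(M)$ as a Lie group together with the module map $\md$, and (ii) count the $\Ad$-orbits on $\ma$.

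For $M_C$, I would quote from \cite{Chakraborty25} that $\Out(M_C)\cong\R$, and check that under this identification $\delta$ is a linear isomorphism $\R\to\R$. It is nonzero because $\cF$ of a corner is $\R^*_+$, so $\delta$ is surjective. Then $\mg=\R$ and $\ma$ is a single point, and since $G$ is abelian $\Ad$ is trivial. Hence $|\ma/G|=1$, and the splitting is just $\delta^{-1}$. Existence is automatic here, and one flow up to cocycle conjugacy follows.

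For $M_n$, I would take from \cite[Theorem 8.4]{Deprez12} that $\Out$ of the II$_1$ factor is $\GL(n,\R)$ realized through the amplification picture. The module is then $\md(g)=|\det g|^{\pm1}$, so $\delta(g)=\mp\log|\det g|$. I expect this to be the main obstacle: extracting from Deprez's construction exactly how $\GL(n,\R)$ acts on the stabilized factor and why the trace scaling is a power of $|\det|$ and not something degenerate. I would first try to use that the factor comes from a $\GL(n,\Z)$- or $\Z^n$-type construction in which $g$ scales a Lebesgue-measure-type trace by $|\det g|$.

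Granting this, $\mg=\mathfrak{gl}(n,\R)$, $d\delta(X)=\mp\Tr X$, and $\ma=\{X:\Tr X=c\}$ for a fixed $c\neq 0$, with $\GL(n,\R)$ acting by conjugation.
\begin{itemize}
\item For $n=1$, $\ma$ is a point, so there is exactly one orbit.
\item For $n\geq2$, conjugation preserves the characteristic polynomial. Hence the matrices $\mathrm{diag}(c/n+t,c/n-t,c/n,\dots,c/n)$ for $t\geq0$ lie in $\ma$ and are pairwise non-conjugate because their eigenvalues differ.
\end{itemize}
This gives $2^{\aleph_0}$ orbits. Since $\mathfrak{S}(\delta)$ is Polish, there are at most $2^{\aleph_0}$ orbits, so the count is exactly $2^{\aleph_0}$, consistent with Theorem \ref{Main theorem: general lc group case}. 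Translating back through Proposition \ref{Prop: flows and splits} gives the stated cardinalities of $\Flow(M_n)/\sim_{\cc}$.
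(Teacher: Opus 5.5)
\textbf{The gap: you assume the outer automorphism groups instead of computing them.} Your reduction and your orbit count are fine. Once $\Out(M)$ is known as a Lie group with $\delta$ surjective, Propositions \ref{Prop: flows and splits} and \ref{Prop: number of orbits of a} reduce everything to $\Ad$-orbits on $\ma$. For $n\geq 2$, your eigenvalue argument is a valid, more hands-on substitute for the paper's appeal to Proposition \ref{Prop: solvable radical nontrivial} (non-solvability of $\mathfrak{sl}(n,\R)$ and the Killing form). The module is also less of an obstacle than you think. Any continuous homomorphism $\GL(n,\R)\to\R$ kills the perfect subgroup $\SL(n,\R)$ and all torsion, so it equals $c\log|\det|$, with $c\neq 0$ by surjectivity. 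The real problem is elsewhere. The statement asserts $\Out(M_C)\cong\R$ and $\Out(M_n)\cong\GL(n,\R)$, and you do not prove these. You quote them from sources that do not state them in that form. As written, you only prove the conditional ``if $\Out$ is as claimed, then the flow counts are as claimed''.

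\textbf{The case $M_n$.} \cite[Theorem 8.4]{Deprez12} is about the II$_1$ factor $B_n=pM_np$. It gives $\Out(B_n)=\{[R_A] : A\in\SL^{\pm}(n,\R)\}$, not $\GL(n,\R)$. No nontrivial module can live on $\Out(B_n)$, since every automorphism of a II$_1$ factor preserves the trace. The construction is $\SL(m,\Z)\actson M_{m,n}(\R)$ by left multiplication, with $m=4n+1$. The map $R_A(y)=yA$ commutes with this action and scales Lebesgue measure by $|\det A|^m$. Via \cite[Theorem C]{Deprez12} this gives $\cF(B_n)=\R^*_+$. The missing step is to assemble $\Out(M_n)$ from the exact sequence $1\to\Out(B_n)\to\Out(M_n)\to\R^*_+\to 1$. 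This sequence is split by $t\mapsto[R_{t^{1/mn}I}]$, and the image of this section commutes with $\Out(B_n)$. Hence $\Out(M_n)\cong\SL^{\pm}(n,\R)\times\R^*_+\cong\GL(n,\R)$.

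\textbf{The case $M_C$.} \cite{Chakraborty25} computes the outer automorphism group of the III$_1$ factor $L(\cR_1)$, not of its core, so $\Out(M_C)$ has to be computed separately.
\begin{enumerate}
\item Uniqueness of the Cartan subalgebra (via Vaes) gives the reduction to the equivalence relation. Together with $H^1(\cR,\T)=\{e\}$, which comes from cocycle superrigidity and perfectness of $\Gamma$, this yields $\Out(M_C)=\Out(\cR_C)$.
\item One then shows that the kernel of the module map is exactly $[\cR_C]$. Modulo $[\cR]$, a measure-preserving automorphism commutes with $\Gamma$, so it is a dilation $D_s$ with $s=\pm1$, and $D_{-1}\in[\cR_C]$.
\item The dilation $D_s$ has module $s^{-6}$, so the module map $\Out(M_C)\to\R^*_+$ is onto and hence an isomorphism.
\end{enumerate}
Only after these computations does your uniqueness argument, with $\ma$ a single point, apply.
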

 
In the next part of the article, we give two different constructions of full II$_\infty$ factors with a continuum of pairwise distinct continuous trace-scaling flows, in cases where the outer automorphism group is unknown and difficult to compute. Recall that a non-amenable countable ICC group is in class $\mathcal{C}$ of Ozawa and Popa (\cite{Ozawa_Popa}) if it is either a subgroup of a hyperbolic group or a discrete subgroup of a simple rank 1 Lie group. In particular, all free groups are in class $\mathcal{C}$ and hence the following result applies to $G = \F_\infty$.  
\medskip
\begin{letterthm}
\label{Thm: main thm tensor case}
    Let $G$ be a group in class $\mathcal{C}$ such that there is a continuous trace-scaling flow on $L(G) \otimes \cB(\cH)$. Let $M = L(G \times G) \otimes \cB(\cH)$, then $|\Flow(M)/ \sim_{\cc}| = 2^{\aleph_0}$. 
\end{letterthm}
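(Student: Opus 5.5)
Fix a continuous trace-scaling flow $\alpha:\R\actson N:=L(G)\otimes\cB(\cH)$, which exists by hypothesis. Write $M=L(G\times G)\otimes\cB(\cH)\cong N\bar\otimes L(G)\cong L(G)\bar\otimes N$. For each $s\in[0,1]$ we build a flow on $M\cong N\bar\otimes N\otimes \cB(\ell^2)$, or more economically on $N\bar\otimes_{\tau} N$ after a fixed identification, by letting $\R$ scale the trace on the two tensor factors with prescribed relative speeds. Concretely, put $\beta^{(s)}_t=\alpha_{st}\otimes\alpha_{(1-s)t}$ on $N\bar\otimes N$. Each factor is trace-scaling, so the tensor product scales $\Tr\otimes\Tr$ by $e^{-st}e^{-(1-s)t}=e^{-t}$, and $\beta^{(s)}$ is continuous. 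Moreover $N\bar\otimes N\cong L(G\times G)\otimes\cB(\cH)\otimes\cB(\cH)\cong M$. Transporting $\beta^{(s)}$ to $M$ through this isomorphism gives a family $\{\beta^{(s)}\}_{s\in[0,1]}\subset\Flow(M)$. The goal is to show that $\beta^{(s)}\sim_{\cc}\beta^{(s')}$ forces $s=s'$, or at worst $s'\in\{s,1-s\}$ via the flip. That already yields $2^{\aleph_0}$ classes, and the upper bound $2^{\aleph_0}$ is automatic for a separable factor.

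For the invariant we use Ozawa–Popa unique prime factorization for products of class $\mathcal{C}$ groups. By Ozawa–Popa (\cite{Ozawa_Popa}), any automorphism $\theta$ of $L(G\times G)$, or more generally any isomorphism between amplifications, decomposes up to unitary conjugacy and a possible flip as $\theta_1\otimes\theta_2$ after adjusting by amplifications. Here $\theta_i:L(G)\to L(G)^{t_i}$ and $t_1t_2=\md(\theta)$. In the II$_\infty$ setting this says that every automorphism $\theta$ of $M$ preserves the product decomposition up to inner automorphisms and the flip, and so determines a pair of scaling factors $(\lambda_1(\theta),\lambda_2(\theta))\in(\R^*_+)^2$ with product $\md(\theta)$. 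The pair is defined as scaling data on each tensor factor, well defined because a corner of $L(G)$ with $\cF=\R^*_+$ still only fixes amplification up to the total. The key point to prove is that, on the index-at-most-two subgroup $\Out_0(M)$ not exchanging the factors, the map $\Out_0(M)\to(\R^*_+)^2$, $[\theta]\mapsto(\lambda_1,\lambda_2)$, is a well-defined group homomorphism refining $\md$. Making $\lambda_i$ well defined is the delicate part. An automorphism $\theta_1\otimes\theta_2$ with $\theta_1\otimes\theta_2$ inner must have each $\theta_i$ trace-preserving up to a compensating scalar, since for a factor $A\bar\otimes B$ the only inner automorphisms of product form are $\Ad u\otimes\Ad v$. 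The subtlety is that $\theta_1$ can be rescaled by $c$ and $\theta_2$ by $c^{-1}$ only through the tensor identification $L(G)^{c}\otimes L(G)^{c^{-1}}\cong L(G)\otimes L(G)$. To fix this ambiguity we normalize via the finite corner $p=p_1\otimes p_2$ with $p_i$ fixed finite projections, and define $\lambda_i$ by how $\theta$ moves $L(G)p_i$ in the decomposition $p M p\cong L(G)\bar\otimes L(G)$ given by unique prime factorization.

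With this homomorphism in hand, the rest is formal. A cocycle conjugacy $\beta^{(s)}\sim_{\cc}\beta^{(s')}$ means there is $\theta\in\Aut(M)$ with $[\theta\beta^{(s)}_t\theta^{-1}]=[\beta^{(s')}_t]$ in $\Out(M)$ for all $t$. Composing with $\lambda=(\lambda_1,\lambda_2)$ gives $\lambda(\beta^{(s)}_t)=(e^{-st},e^{-(1-s)t})$. Conjugation by $\theta$ acts on $(\R^*_+)^2$ trivially if $\theta\in\Out_0$, and by swapping coordinates otherwise. Hence $s'=s$ or $s'=1-s$, giving continuum many classes for $s\in[0,1/2]$. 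The main obstacle is the second step: rigorously extracting from Ozawa–Popa, which is stated for II$_1$ factors and isomorphisms of tensor products up to amplification, a canonical homomorphism $\Out_0(M)\to(\R^*_+)^2$ for the II$_\infty$ factor $M$. In particular one must check independence of the choice of unitaries and scalings. I would first treat $pMp$ with $p$ a product projection, reducing automorphisms of $M$ to isomorphisms $pMp\to qMq$, and use that the relative amplification in Ozawa–Popa is determined by the Jones-type index data of the image of $L(G)p_1\otimes 1$. Failing that, I would fall back to the Lie-algebraic picture of Theorem \ref{Main theorem: general lc group case}: show that the images of $\beta^{(s)}$ are non-conjugate splittings of $\delta$ inside the subgroup of product-form outer automorphisms.
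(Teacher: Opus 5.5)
\textbf{The overall strategy matches the paper's.} You use the same flows $\alpha_{st}\otimes\alpha_{(1-s)t}$ and the same invariant: how the flow scales the trace on each tensor leg, which the paper calls the partial module. You reach the same conclusion $s'\in\{s,1-s\}$. Packaging the invariant as a homomorphism $\lambda:\Out_0(M)\to(\R^*_+)^2$ is a legitimate repackaging of the paper's partial module together with Lemma \ref{Lemma:unitaries preserve the module}. It even has a small advantage: $\lambda$ only sees images in $\Out(M)$ and has abelian target, so the cocycle in $\sim_{\cc}$ is absorbed automatically. The paper instead works with a $\Phi$ intertwining the flows exactly.

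\textbf{The gap is the step you flag as the main obstacle, and you have misplaced its difficulty.} What you need is an existence statement in the II$_\infty$ factor $M=N_1\bar\otimes N_2$ itself. For every $\theta\in\Aut(M)$ there must be a unitary $u\in\cU(M)$ and a permutation $\pi$ with $u\theta(N_i)u^*=N_{\pi(i)}$, with no amplification anywhere.

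Once this is known, well-definedness of $\lambda$ is easy. Two admissible choices of $u$ differ by an inner automorphism of product form $\gamma_1\otimes\gamma_2$. Its legs are inner by \cite{Kallman}, so they preserve the traces exactly; there is no "compensating scalar" in the II$_\infty$ picture. The homomorphism property is then a one-line computation.

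The existence statement is the real content, and it is not a formal consequence of Ozawa--Popa. In a finite corner, \cite{Ozawa_Popa} only gives conjugacy of the decompositions after amplifications $t,1/t$. The corner decomposition you compare against depends on non-canonical choices: the target corner $q_1\otimes q_2$ with $\tau(q_1)\tau(q_2)=\tau(\theta(p))$, and the partial isometry. Since $\cF(L(G))=\R^*_+$, isomorphism classes of the corner factors carry no information about $t$. So "normalize via the index data of the image of $L(G)p_1\otimes 1$" is an unproved assertion that is essentially equivalent to what you need.

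The paper supplies exactly this missing input as Proposition \ref{Prop: unique prime factors for semifinite factors}. It compresses both decompositions to a common trace-one corner and applies Ozawa--Popa there, which yields stable unitary conjugacy. It then uses the corner criterion \cite[Proposition 6.3(iv)]{HoudayerMarrakchiVerraedt} to lift stable unitary conjugacy back to $M$. For II$_\infty$ factors, stable unitary conjugacy is genuine unitary conjugacy.

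\textbf{Your fallback is not available either.} $\Out(M)$ is not known to be locally compact here; this is precisely why the paper treats these examples separately. So Theorem \ref{Main theorem: general lc group case} does not apply, and in any case it only yields a dichotomy. Restricting to product-form outer automorphisms says nothing about conjugacy by arbitrary elements of $\Out(M)$ without the same unique prime factorization input.
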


The main ingredient in the proof of Theorem \ref{Thm: main thm tensor case} is a semifinite version of the unique prime factorization result of \cite{Ozawa_Popa} for group von Neumann algebras of products of groups from class $\mathcal{C}$ (see Proposition \ref{Prop: unique prime factors for semifinite factors}). Recall that a II$_\infty$ factor $M$ is called \textit{prime} if it cannot be decomposed as a tensor product of two II$_\infty$ factors. In the next result we construct a full prime II$_\infty$ factor with a continuum of pairwise distinct trace-scaling flows. 
\medskip
\begin{letterthm}
\label{Thm: main thm prime case}
    Let $P_0 = (L(\F_\infty) \otimes L^\infty[0,1]) \ast L(\F_\infty)$ and let $P = P_0 \otimes \cB(\cH)$. Then $P$ is a full II$_\infty$ prime factor with $|\Flow(P)/\sim_{\cc}| = 2^{\aleph_0}$. 
\end{letterthm}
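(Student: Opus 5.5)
The plan is to first establish the structural properties of $P$ (fullness and primeness), and then produce a continuum of continuous trace-scaling flows indexed by an auxiliary parameter living on the diffuse abelian tensor factor $L^\infty[0,1]$. After that, we separate the flows up to cocycle conjugacy by an invariant of the associated III$_1$ factors.

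\emph{Fullness and primeness.} The algebra $P_0$ is a tracial free product $A\ast B$ with $A = L(\F_\infty)\otimes L^\infty[0,1]$ diffuse and $B = L(\F_\infty)$ a non-amenable factor.
\begin{itemize}
\item Fullness: I would invoke the known results of Ueda and Houdayer--Ueda on fullness of free products, which say that a free product of a diffuse algebra with a non-amenable factor is a full II$_1$ factor. Fullness of a II$_1$ factor passes to $P = P_0\otimes\cB(\cH)$.
\item Primeness: I would invoke the primeness results for free products, namely Peterson's $L^2$-rigidity (or Ioana--Peterson--Popa, or Ozawa's relative solidity for free products). These show that $P_0$ is prime. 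Since a tensor decomposition of $P$ into two II$_\infty$ factors would compress to a tensor decomposition of a corner $pPp\cong P_0^t$ into II$_1$ factors, $P$ is prime as well.
\end{itemize}

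\emph{Constructing the flows.} Let $\theta$ be R\u adulescu's continuous trace-scaling flow on $L(\F_\infty)\otimes\cB(\cH)$. For every measure-preserving (or more generally nonsingular, properly rescaled) flow $\sigma$ on $[0,1]$, the flow $\theta\otimes\sigma$ is a continuous trace-scaling flow on the first amplified free factor $L(\F_\infty)\otimes L^\infty[0,1]\otimes\cB(\cH)$. The second factor carries a copy of $\theta$.

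I would then use the Dykema--R\u adulescu description of amplified free products. This identifies $P$ with a free product, amalgamated over a common copy of $L^\infty(\R_+)$ or $\cB(\cH)$-type subalgebra on which both flows act in the same way, of the two amplified factors. After conjugating $\theta$ so that the two flows agree on the amalgam, the free product flow $\alpha_\sigma = (\theta\otimes\sigma)\ast\theta$ is a continuous trace-scaling flow on $P$. The trace-scaling property holds because the trace on the amalgamated free product is the free product of the traces.

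\emph{Distinguishing the flows (main obstacle).} By the Connes--Takesaki correspondence recalled in the introduction, $\alpha_\sigma\sim_{\cc}\alpha_{\sigma'}$ if and only if the III$_1$ factors $M_\sigma = P\rtimes_{\alpha_\sigma}\R$ and $M_{\sigma'}$ are isomorphic. Each $M_\sigma$ decomposes as an amalgamated free product of $(L(\F_\infty)\otimes\cB(\cH)\otimes L^\infty[0,1])\rtimes_{\theta\otimes\sigma}\R$ and the free Araki--Woods-type factor $(L(\F_\infty)\otimes\cB(\cH))\rtimes_\theta\R$.

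I would separate these by Connes' $\tau$-invariant (or $\mathrm{Sd}$), which is available since $M_\sigma$ is full. Using the Houdayer--Ueda computations for free products, I expect $\tau(M_\sigma)$ to be governed by the spectral data of $\sigma$ on $L^2[0,1]\ominus\C$. For example, if $\sigma$ is chosen as a periodic or almost periodic flow with prescribed point spectrum $\Lambda\subset\R$, then $\tau(M_\sigma)$ should be the weakest topology making $t\mapsto e^{i\lambda t}$ continuous for $\lambda\in\Lambda$, together with the contribution from $\theta$. Varying $\Lambda$ over $2^{\aleph_0}$ pairwise distinct countable subgroups of $\R$ would then yield $2^{\aleph_0}$ non-isomorphic $M_\sigma$.

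This last step is where I expect the real difficulty. Rigorously computing $\tau$, or finding a substitute rigidity argument (for instance Popa-style intertwining $L^\infty[0,1]\rtimes_\sigma\R\prec$ to recover $\sigma$ up to conjugacy from $M_\sigma$), requires controlling the contribution of R\u adulescu's flow $\theta$, which is not explicitly understood. If the $\tau$-invariant approach fails, I would try to recover the flow $\sigma$ from the relative commutant or normalizer structure of the canonical abelian subalgebra $L^\infty[0,1]$ inside $M_\sigma$, using deformation/rigidity for amalgamated free products.
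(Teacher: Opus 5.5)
You handle fullness and primeness correctly, essentially as the paper does: Ueda's results for fullness, a primeness theorem for free products such as \cite[Corollary 1.3]{ChifanHoudayer} for $P_0$, then compression to a finite corner.

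\textbf{The gap: distinguishing the flows.} You flag this step yourself, and the problem is not just technical: the invariant you propose cannot separate the flows. Grant your heuristic that $\tau(M_\sigma)$ is the join of the topology defined by the characters $t\mapsto e^{i\lambda t}$, $\lambda\in\Lambda$, and the contribution of $\theta$. The contribution of $\theta$ is already the usual topology of $\R$. Indeed, R\u adulescu's flow is the dual flow of $R_{\reg}$, whose modular flow is governed by the regular representation. The weakest topology making the regular representation strongly continuous is the usual one, by Riemann--Lebesgue. The usual topology is finer than every character topology, so the join is the usual topology for every $\Lambda$.

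Concretely, take the paper's factors $M_a\cong T_\lambda\ast R_{\reg}=\Gamma(\nu+\delta_a+\delta_{-a},1)''$, the natural realization of your periodic case. Their $\tau$-invariant is the weakest topology making $t\mapsto U_t$ strongly continuous. Because of the Lebesgue-equivalent part $\nu$, this is the usual topology for every $a$. $\mathrm{Sd}$ is not available either, since these factors are not almost periodic.

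What is missing is a genuinely finer invariant. The paper realizes the III$_1$ factors as free Araki--Woods factors $\Gamma(\mu[a],1)''$ with $\mu[a]\in\cS(\R)$. It then applies the Houdayer--Shlyakhtenko--Vaes classification \cite[Corollary B]{HouShlyaVaes}. That invariant, the measure class of $\bigvee_{k\geq 1}\mu^{\ast k}$, recovers the group $a\Z$ generated by the atoms, so $M_a\cong M_b$ iff $a=b$ (Lemma~\ref{Lemma: non isomorphic factors}). This classification rests on a deformation/rigidity criterion for unitary conjugacy of states. It is the rigorous version of your intertwining fallback, and it cannot be supplied in passing.

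\textbf{The construction of the flows also needs repair,} though this part is fixable. A trace-scaling flow can never leave invariant a copy of $\cB(\cH)$ on which the trace is semifinite. The restricted trace would be a multiple of the canonical trace, which every automorphism of $\cB(\cH)$ preserves, contradicting the scaling. So the amalgam must be $L(\R)\cong L^\infty(\R_+)$ with the translation action. Identifying the resulting amalgamated free product with $P$ is then exactly Shlyakhtenko's $L(\R)$-valued semicircular machinery \cite{Shlyakhtenko04}:
$c(N_1)\ast_{L(\R)}c(R_{\reg})\cong\Phi(c(N_1),\Tr)\cong(p\,c(N_1)p\ast L(\F_\infty))\otimes\cB(\cH)$.
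Here $p$ is a trace-one projection of full central support with $p\,c(N_1)p\cong L(\F_\infty)\otimes L^\infty[0,1]$, cf.\ Lemma~\ref{Lemma: lemma for connes embeddibility}.

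The paper runs this in the opposite direction: it starts from explicit III$_1$ factors $M_a$ and computes their cores. By Connes--Takesaki this is equivalent, but it has the decisive advantage that the III$_1$ factors are explicit free Araki--Woods factors, to which a classification theorem applies. In your parametrization, the first free component $N_1=(L(\F_\infty)\otimes\cB(\cH)\otimes L^\infty[0,1])\rtimes_{\theta\otimes\sigma}\R$ is only known abstractly. For periodic $\sigma$ it is a III$_\lambda$ factor whose core has the same shape as $c(T_\lambda)$, but it is not known to be $T_\lambda$. To close the argument, replace it by $T_\lambda$ and use \cite{HouShlyaVaes}.
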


To prove Theorem \ref{Thm: main thm prime case}, we explicitly construct an uncountable family of free Araki-Woods factors. Indeed we observe that letting $R_{\reg}$ be the free Araki-Woods factor corresponding to the left regular representation, and $T_\lambda$ be the free Araki-Woods factors of type III$_\lambda$, the free products $T_\lambda \ast R_{\reg}$ with respect to their usual quasi-free states are pairwise non-isomorphic as $\lambda$ varies. One way to see this is to consider their spectral measures and use the classification of Houdayer-Shlyakhtenko-Vaes as in \cite{HouShlyaVaes} (see Lemma \ref{Lemma: non isomorphic factors}). The rest of the technical effort is to use the results of \cite{Shlyakhtenko04} to show that their continuous cores are isomorphic to $P$. We conclude by posing the following natural question that stems out of this article: 
\medskip
\begin{letterques}
 For which Polish groups $G$ and surjective continuous homomorphisms $\delta: G \rightarrow \R$, can one find a full II$_\infty$ factor $M$ such that the pairs $(G,\delta)$ and $(\Out(M), -\log (\md))$ are isomorphic?
\end{letterques}

\textbf{Convention:} In this article every von Neumann algebra is assumed to have a separable predual and every Lie group is assumed to be a locally compact second countable finite dimensional real Lie group, unless otherwise stated. 

\textbf{Acknowledgements:} S.C. is supported by the ERC advanced grant no. 101141693 titled \textit{Noncommutative ergodic theory of higher rank lattices}. S.C. would like to thank Cyril Houdayer for several nice discussions on this topic. 
S.N. is supported by FWO PhD Fellowship 11A1A26N of the Research Foundation Flanders. The authors would like to thank Cyril Houdayer, Yusuke Isono, Sergio Gir\'{o}n Pacheco and Stefaan Vaes for having a look at the draft and for some helpful comments. The authors are grateful to Stefaan Vaes for suggesting a deduction of Theorem \ref{Main theorem: general lc group case} for locally compact groups from the Lie group case.

\section{Preliminaries}
 \subsection{Full factors and trace-scaling flows}
 \label{Subsec: full factors}
    Let $M$ be a factor with a separable predual.
    The automorphism group $\Aut(M)$ is a Polish group with the so called \textit{u-topology} (see \cite[Chapter IX, Page 152]{TakesakiII}). The factor $M$ is called \textit{full} if the subgroup of inner automorphisms $\Inn(M)$ is a closed subgroup of $\Aut(M)$. The quotient is called the outer automorphism group of $M$ and is a Polish group exactly when $M$ is full. By \cite[Corollary 3.6]{Connes74}, $M$ is full if and only if for any free ultrafilter $\omega$ on $\N$, the central sequence algebra $M_\omega$ is trivial. It is easy to check that any type I factor is full. If $M$ is a II$_1$ factor, then $M$ is full if and only if $M$ does not have property $\Gamma$ of Murray and von Neumann. In particular, the hyperfinite II$_1$ factor is not full. By \cite[Proposition 3.9]{Connes74}, a factor of type III$_0$ is never full and by \cite[Corollary 3.9]{Connes74}, there are full factors of all other types. The unitary group $\cU(M)$ of a von Neumann algebra will be used throughout the article, as a Polish topological group with the strong -* topology. 

    Now suppose that $M$ is a II$_\infty$ factor with semifinite trace $\tau$. For an automorphism $\alpha \in \Aut(M)$, the Connes-Takesaki module is a continuous group homomorphism $\md: \Aut(M) \rightarrow \R^*_+$ where $\md(\theta)$ is the unique positive number such that $\tau \circ \theta = \md(\theta) \cdot \tau$. Since $\tau(uxu^*) = \tau(x)$ for a unitary $u$, $\md$ is trivial on the subgroup $\Inn(M)$. Hence if $M$ is full, we get a continuous group homomorphism $\md: \Out(M) \rightarrow \R^*_+$. 

    For a II$_1$ factor $B$, the fundamental group $\cF(B)$ is the subgroup of $\R^*_+$ given by $\{t > 0 \; | \; B^t \cong B\}$ where $B^t$ denotes the $t$-amplification of $B$. If $M$ is a full II$_\infty$ factor and $p$ is a projection with finite trace, then the image of $\md$ lands precisely inside $\cF(pMp)$. This gives a natural exact sequence: 
    \begin{align*}
        1 \rightarrow \Out(pMp) \rightarrow \Out(M) \rightarrow \cF(pMp) \rightarrow 1
    \end{align*}
    In this article \textit{a flow} on a II$_\infty$ factor $M$ will always mean an action of $\R$ on $M$ which is continuous for the u-topology on $\Aut(M)$ when viewed as a group homomorphism $\R\to \Aut(M)$. A flow $\alpha$ is called trace-scaling if $\tau \circ \alpha_t = e^{-t} \cdot \tau$ for all $t \in \R$. We denote by $\Flow(M)$ the set of all such continuous trace-scaling flows on $M$. Two flows $\alpha, \beta \in \Flow(M)$ are called cocycle conjugate if there is a continuous $1$-cocycle $u: \R \rightarrow \cU(M)$ satisfying $u_{s+t} = u_s \alpha_s(u_t)$ for all $s,t \in \R$ and an automorphism $\theta \in \Aut(M)$ satisfying: 
    \begin{align*}
        \theta^{-1} \circ \beta_t \circ \theta = \Ad(u_t) \circ \alpha_t \text{ for all } t \in \R
    \end{align*}
    If $\alpha$ and $\beta$ in $\Flow(M)$ are cocycle conjugate we shall denote this by $\alpha \sim_{\cc} \beta$. One can check that $\sim_{\cc}$ is indeed an equivalence relation on $\Flow(M)$. Notice that if $M$ admits a continuous trace-scaling flow, i.e., if $\Flow(M)$ is non-empty, then the image of $\md: \Out(M) \rightarrow \R^*_+$ is $\R^*_+$ and hence $\cF(pMp) = \R^*_+$.

    Finally note the map $\Flow(M)\to \text{Fun}(\mathbb{Q},\Aut(M))$ given by sending a flow $\alpha$ on $M$ to the function $\mathbb{Q}\ni q \mapsto \alpha_q$ is injective. Since Polish groups have cardinality at most that of the continuum, this shows also $|\Flow(M)| \leq2^{\aleph_0}$ - under our standing assumption that $M$ has separable predual. As such, whenever we claim a II$_\infty$ factor $M$ satisfies $|{\Flow(M)}/{\sim_{\cc}}|=2^{\aleph_0}$ we will only prove $|\Flow(M)/\sim_{\cc}|\geq 2^{\aleph_0}.$

    \subsection{Free Araki-Woods factors} 

    We recall some basic notions from the theory of free Araki-Woods factors. Let $K$ be a complex Hilbert space and recall that the Fock space $\cF(K)$ over $K$ is given by: 
    \begin{align*}
        \cF(K) = \C\Omega \oplus \bigoplus_{n \geq 1} K^{\otimes n}
    \end{align*}
    The vector $\Omega$ is called the \textit{vacuum} vector. For a vector $\xi \in K$, one has a \textit{left creation} operator  $\ell(\xi)$ given by: 
    \begin{align*}
        \ell(\xi)\Omega = \xi \text{ and } \ell(\xi) (\eta_1,...,\eta_n) = (\xi,\eta_1,...,\eta_n)
    \end{align*}
    One checks that $\ell(\xi)^* \ell(\eta) = \langle \eta,\xi \rangle$. If $K_{\R} \subset K$ is a real subspace, then for any $\xi \in K_{\R}$, we denote the operator $s(\xi) = \ell(\xi) + \ell(\xi)^*$. 

    Now suppose that $\cH_\R$ is a real separable Hilbert space and consider an orthogonal representation $U: \R \actson \cH_\R$. Let $\cH = \cH_\R \otimes _\R \C$ be the complexification of $\cH_\R$ and by abusing notation let us still denote the unique extension to a unitary  representation on $\cH$ by $U$. Then by Stone's theorem there is a self adjoint operator $A$ on $\cH$ such that $U_t = A^{it}$. Now consider the operator $j$ on $\cH$ given by: 
    \begin{align*}
        j(\xi) = \left( \frac{2}{1 + A^{-1}} \right)^{1/2} \xi
    \end{align*}
    Letting $\cK_\R = j(\cH_\R)$, it turns out that $\cK_\R \bigcap \iota \cK_\R = \{0\}$ and $\overline{\cK_\R + \iota \cK_\R} = \cH$. 

    The \textit{free Araki-Woods factor} associated to $(\cH_\R,U)$, as introduced in \cite{Shlyakhtenko97}  is the von Neumann algebra generated by $\{s(\xi) \; | \; \xi \in \cK_\R\}$ in $\cF(\cH) \subset \cB(\cF(\cH))$ and is denoted by $\Gamma(\cH_\R,U)''$. The vector state $\phi_U$ associated to the vacuum vector $\Omega$ given by $\phi_U(x) = \langle x\Omega,\Omega \rangle$ is called the \textit{free quasi-free state}. It is shown in \cite{Shlyakhtenko97} that for any $\xi \in \cK_\R$, the modular automorphism group is given by: 
    \begin{align*}
        \sigma^{\phi_U}_t(s(\xi)) = s(U_t(\xi))
    \end{align*}

    Thanks to the spectral theorem, given a pair $(\cH_\R,U)$, we get a Borel measure $\mu$ on $\R$ and a multiplicity function $m: \R \rightarrow \{0,1,2,...,\infty\}$ such that: 
    \begin{align*}
        \cH = \int^{\oplus}_\R \C^{m(x)} \; d\mu(x) \text{ and } U_t(\xi)(x) = e^{itx}\xi(x)
    \end{align*}
    The free Araki-Woods factor $\Gamma(\cH_\R,U)''$ is also denoted by $\Gamma(\mu,m)''$. It can be checked that $\mu$ is a symmetric measure and $m$ is a symmetric multiplicity function. Conversely given a symmetric Borel measure on $\R$ and a symmetric multiplicity function, one can reconstruct the pair $(\cH_\R,U)$. 
    \medskip
    \begin{example}
    \label{Example: Free araki woods examples}
        \begin{enumerate}
            \item If $U$ is the trivial representation on a Hilbert space $\cH_\R$ of dimension $n$, then the modular group is trivial and one gets that  $\Gamma(\id, \cH_\R)'' = L(\F_n)$. 
            
            \item If $U$ is almost periodic, the spectral measure is purely atomic. For $0<\lambda<1$, let $a = |\log \lambda|$. Let $T_\lambda$ denote the free Araki-Woods factor given by $\Gamma(\delta_a + \delta_{-a},1)''$ where $1$ denotes the constant multiplicity function. Then $T_\lambda$ is a factor of type III$_\lambda$. Its quasi-free state $\psi_\lambda$ has a periodic modular automorphism group with period $\frac{2\pi}{a}$. In fact the discrete core is isomorphic to $L(\F_\infty) \otimes \cB(\cH)$ (see \cite[Corollary 6.8]{Shlyakhtenko97}) 

            \item If $U: \R \actson L^2(\R)$ is the regular representation, then the free Araki-Woods factor $\Gamma(L^2(\R),U)''$ is of type III$_1$ by \cite[Theorem 6.10]{Shlyakhtenko97} and its continuous core is known to be $L(\F_\infty) \otimes \cB(\cH)$. Equivalently the spectral measure is equivalent to the Lebesgue measure. We shall denote this free Araki-Woods factor by $(R_{\reg}, \phi_{\reg})$. In this case the dual action is precisely the trace-scaling action constructed in \cite{Radulescu}. 
        \end{enumerate}
    \end{example}

    \subsection{Lie theory}
Most of the material below can be found in \cite{Hall_book}, \cite{Knapp_book}, or in a standard course on Lie groups and Lie algebras. We summarise a few notions which will become useful in Section \ref{Sec: Dichotomy for Lie groups}.

    Recall that a Lie group is a group $G$ endowed with the structure of a differential manifold such that both the multiplication $G\times G\to G$ and the inversion map $G\to G$ are smooth. We will only work with real Lie groups, meaning the differential structure is that of a real manifold. The right notion of map in the category of Lie groups is that of a smooth group homomorphism but fortunately a continuous homomorphism of Lie groups is automatically smooth.  
    
    If $G$ is such a group then the tangent space at the identity $T_eG$ has the structure of a finite dimensional real Lie algebra, usually denoted $\mathfrak{g}.$ That is to say $\mathfrak{g}$ is a finite dimensional real vector space endowed with a skew-symmetric bilinear map $[\cdot,\cdot]:\mathfrak{g}\times \mathfrak{g}\to \mathfrak{g}$ which satisfies the Jacobi identity: $$[X,[Y,Z]]+[Y,[Z,X]]+[Z,[X,Y]]=0 \text{ for all } X,Y,Z\in \mathfrak{g}.$$ 

    The assignment $G\to \mathfrak{g}$ is functorial meaning for every continuous homomorphism $\varphi:G\to H$ of Lie groups, the differential satisfies $$(D_e\varphi)[x,y]=[D_e\varphi(x),D_e\varphi(y)].$$ 

    Moreover, one can consider the \textit{ exponential map} $\exp:\mathfrak{g}\to G.$ Recalling by definition $\mathfrak{g}=T_eG,$ the exponential is formally given by $\exp(X_e)=\gamma(1)$ where $\gamma(t)$ is the one-parameter subgroup with tangent vector $X_e$ at the identity. This map is a local diffeomorphism but it is not in general bijective. It is also not a homomorphism unless $\mathfrak{g}$ is abelian, but rather its governed by the Baker-Campbell-Hausdorff formula: for any $X,Y\in \mathfrak{g}$ close enough to $0$ one has 
    \begin{multline*}
      \exp(X)\exp(Y)=\exp(X+Y+ \frac{1}{2}[X,Y]+\frac{1}{12}[X,[X,Y]] \\  
      +\frac{1}{12}[Y,[Y,X]]+...) 
    \end{multline*}
    where higher order terms involve higher order nested commutators. An explicit formula, due to Dynkin can be found for example in \cite{Jacobson_book} but we will not need it. Similarly, one has the Zassenhaus formula which gives 
    \begin{multline*}
    \exp(X+Y)=\exp(X)\exp(Y)\exp(-\frac{1}{2}[X,Y]) \\ \exp(\frac{1}{6}(2[Y,[X,Y]]+[X,[X,Y]]))...    
    \end{multline*}
    for sufficiently small $X,Y\in \mathfrak{g},$ where for all $n\geq 3$ the $n^{\text{th}}$ factor in the product only involves commutators of degree $n-1.$

    As a particular instance of the functoriality $G\mapsto \mathfrak{g},$ one recovers the \textit{adjoint representation}: for any element $g\in G,$ conjugation by $g$ is an automorphism of $G$ whose differential at the identity induces an automorphism $\text{Ad}(g)$ of the Lie algebra $\mathfrak{g}=T_eG$ and indeed $G\ni g\mapsto \text{Ad}(g)\in \text{Aut}(\mathfrak{g})$ is a group homomorphism. If $g$ happens to be in the image of the exponential map $g=\exp(X)$ for some $X\in \mathfrak{g}$ then Campbell's Identity ensures
    \begin{align*}
        \Ad(\exp(X))(Y)&=\exp (\text{ad}(X))(Y)=\sum_{n\geq 0}\frac{1}{n!}\text{ad}^n(X)(Y) \\ &=\sum_{n\geq 0}\frac{1}{n!}[X,[X,...,[X,Y]]]    
    \end{align*}
    where the $n^{\text{th}}$ term above contains $n$ copies of $X.$  

    Here the map $\text{ad}(X):\mathfrak{g}\to \mathfrak{g}$ is the adjoint action of $X$ on $\mathfrak{g}.$ It is a derivation, meaning it satisfies $$\text{ad}(X)\left([Y,Z] \right)=[\text{ad}(X)(Y),Z]+[Y,\text{ad}(X)(Z)] \text{ for all } X,Y,Z\in \mathfrak{g}.$$

    The map $\mathfrak{g}\ni X\to \text{ad}(X)\in \text{End}_{\mathbb{R}}(\mathfrak{g})$ provides a bilinear form $B$ -called the Killing form on $\mathfrak{g}$ given by $$B(X,Y)=\text{Tr}(\text{ad}(X)\text{ad}(Y))$$ where $\text{Tr}$ is the unnormalised trace on $\text{End}_\mathbb{R}(\mathfrak{g})\cong M_{dim_\mathbb{R}(\mathfrak{g})}(\mathbb{R}).$  

    Cartan's Criterion then says $B$ is non-degenerate if and only if $\mathfrak{g}$ is semi-simple, which is equivalent to saying the maximal solvable ideal (called radical) of $\mathfrak{g}$ is trivial. In this context an ideal $\mathfrak{i}$ of a Lie algebra $\mathfrak{g}$ is a linear subspace satisfying $[\mathfrak{i},\mathfrak{g}]\subseteq \mathfrak{i}$ and a Lie algebra is called solvable if the derived series given by $\mathfrak{g}^{(1)}=\mathfrak{g}$ and $\mathfrak{g}^{(k+1)}=[\mathfrak{g}^{(k)},\mathfrak{g}^{(k)}]$ eventually becomes trivial. Sums of solvable ideals remain solvable ensuring every finite dimensional Lie algebra has a maximal solvable ideal.   
    
    \subsection{$\R$-kernels and group cohomologies}

     We need some standard results on measurable and continuous group cohomology and for all basic definitions, we refer the reader to the seminal works of Moore (\cite{Moore_cohomology12}, \cite{Moore_cohomology3} and \cite{Moore_cohomology4}) and the work of Austin and Moore \cite{Austin_Moore}. For a locally compact second countable group $G$ and a locally compact abelian group $A$ there are two cohomology theories we will be interested in. On the one hand, we have the continuous cohomology defined by working with continuous $A$-valued $n$-cocycles/coboundaries on $G,$ denoted $Z^n_{\cts}(G,A)$ and $B^n_{\cts}(G,A)$. On the other hand, endowing $G$ with a left Haar measure $\mu_G,$ one can consider equivalence classes, modulo equality $\mu_G$-everywhere, of measurable $A$-valued $n$-cocycles/coboundaries - denoted $Z^n_m(G,A)$ and $B_m^n(G,A).$ The respective quotients yield the continuous and respectively measurable $n$-cohomology groups that we denote by $H^n_{\cts}(G,A)$ and $H^n_m(G,A)$. For what follows we will be interested in the specific case when $G = \R$ and $A = \T, \R, \Z$.   

    Let $M$ be a full factor. A continuous group homomorphism $\phi: \R \rightarrow \Out(M)$ is an $\R$-\textit{kernel}. If $\phi_0$ is a Borel lift $\R \rightarrow \Aut(M)$, then $\phi_0(s)\phi_0(t)$ and $\phi_0(s+t)$ have the same image in $\Out(M)$ and hence differ by an inner automorphism. One can choose a Borel map $\R\times \R \ni (s,t)\mapsto u_{s,t}\in \mathcal{U}(M)$ such that $\phi_0(s)\phi_0(t) = \Ad(u_{s,t}) \phi_0(s+t)$. Now we get: 
    \begin{align*}
        &\phi_0(r)(\phi_0(s)\phi_0(t)) = \phi_0(r) \Ad(u_{s,t})\phi_0(s+t) = \Ad(\phi_0(r)(u_{s,t}))\phi_0(r) \phi_0(s+t) \\ &= \Ad(\phi_0(r)(u_{s,t})) \Ad_{u_{r,s+t}}\phi_0(r+s+t) \text{ and }(\phi_0(r)\phi_0(s)) \phi_0(t)  \\ &= \Ad(u_{r,s})\phi_0(r+s)\phi_0(t) = \Ad(u_{r,s})\Ad(u_{r+s,t})\phi_0(r+s+t)
    \end{align*}
    Thus $\Ad(u_{r,s}u_{r+s,t}) = \Ad(\phi_0(r)(u_{s,t})) \Ad_{u_{r,s+t}}$ and hence there is a scalar $\Omega(r,s,t) \in \T$ such that the unitaries differ by:
    \begin{align*}
        u_{r,s}u_{r+s,t} = \Omega(r,s,t)\phi_0(r)(u_{s,t})u_{r,s+t}
    \end{align*}
    One can check that $\Omega \in Z^3_m(\R,\T)$ and the cohomology class $[\Omega] \in H^3_m(\R,\T)$ does not depend on the choice of the unitaries or the lift. This cohomology class is called the \textit{obstruction} of the kernel $\phi$ and denoted by $\Obs(\phi)$.
    We need the following lemma for our results. We suspect this will be well known to the experts but finding a reference proves to be a nontrivial task, we therefore include a proof. 
    
    \medskip
    \begin{lemma}
    \label{Lemma: trivial cohomology of R}
        For $n \geq 2$, the cohomology groups $H^n_m(\R,\T)$ are trivial.  
    \end{lemma}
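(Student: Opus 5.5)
The plan is to pass from $\T$-coefficients to $\R$- and $\Z$-coefficients via the exponential sequence, and then to use known computations of the measurable cohomology of $\R$ with these coefficients. The short exact sequence of locally compact abelian groups
\begin{align*}
0 \rightarrow \Z \rightarrow \R \xrightarrow{\;e^{2\pi i\,\cdot}\;} \T \rightarrow 0
\end{align*}
admits a Borel section $\T \rightarrow \R$ (take $[0,1)$). By Moore's results (\cite{Moore_cohomology3}), it therefore induces a long exact sequence in measurable cohomology:
\begin{align*}
\cdots \rightarrow H^n_m(\R,\R) \rightarrow H^n_m(\R,\T) \rightarrow H^{n+1}_m(\R,\Z) \rightarrow H^{n+1}_m(\R,\R) \rightarrow \cdots
\end{align*}
Hence it suffices to show that $H^n_m(\R,\R) = 0$ and $H^{n+1}_m(\R,\Z)=0$ for $n\ge 2$.

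For the $\R$-coefficients, I would invoke Moore's comparison theorem \cite{Moore_cohomology3}, \cite{Austin_Moore}. For $A$ a Fréchet (in particular finite-dimensional) vector space, measurable cohomology agrees with continuous cohomology, so $H^n_m(\R,\R)=H^n_{\cts}(\R,\R)$. By van Est's theorem, continuous cohomology of a connected Lie group with coefficients in a real vector space is relative Lie algebra cohomology with respect to a maximal compact subgroup. For $\R$ the maximal compact subgroup is trivial, so we get $H^n(\mathfrak{r},\R)$ for the one-dimensional abelian Lie algebra $\mathfrak{r}$. Its Chevalley--Eilenberg complex is $\Lambda^\bullet\R$, which vanishes in degrees $\ge 2$. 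Alternatively, one can cite directly that $H^n_{\cts}(\R^k,\R)\cong\Lambda^n(\R^k)^*$.

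For the $\Z$-coefficients, I would use Wigner's theorem (as recorded in \cite{Austin_Moore}): for a locally compact group $G$ and a discrete module $A$, measurable cohomology $H^n_m(G,A)$ coincides with the cohomology of the classifying space $BG$ (equivalently, with Segal/Wigner cohomology). Since $\R$ is contractible, $B\R$ is contractible too, and so $H^{k}_m(\R,\Z)=0$ for all $k\ge 1$. Plugging both vanishings into the long exact sequence gives $H^n_m(\R,\T)=0$ for $n\ge 2$. The case $n=1$ is excluded because $H^1_m(\R,\T)=\hat\R\neq 0$; this is consistent with the sequence, since $H^1(\R,\R)=\R$ surjects onto it.

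The main obstacle is correctly citing and justifying the identification of measurable cohomology with discrete coefficients with $H^*(B\R,\Z)$. This is the subtle part of Moore's theory: it uses the fact that $\R$ is a Lie group, so the Wigner/Moore comparison applies. A more hands-on fallback argument also works. Measurable cohomology of $\R$ restricted to the cocompact lattice $\Z$ is controlled by Shapiro's lemma, which gives $H^n_m(\R,\mathrm{Ind}) = H^n(\Z,\cdot)$, and $\Z$ has cohomological dimension $1$. Concretely, write $\T$-valued Borel functions on $\R$ as the induced module $L^0(\R/\Z\to\ldots)$, and use Shapiro's lemma together with the vanishing of $H^{\ge2}(\Z,\cdot)$ for any coefficients to kill classes in degrees $\ge 2$. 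I would present the long exact sequence argument as the main proof and mention this lattice argument as a check.
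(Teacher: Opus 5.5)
Your main argument is essentially the paper's: the same long exact sequence from $0\to\Z\to\R\to\T\to 0$, the identification $H^n_m(\R,\R)=H^n_{\cts}(\R,\R)$ (Austin--Moore) followed by the Lie algebra cohomology of the one-dimensional abelian Lie algebra, and the vanishing of $H^k_m(\R,\Z)$ for $k\ge 1$. For that last step the paper uses Austin--Moore's reduction to the (trivial) maximal compact subgroup of $\R$, while you use contractibility of $B\R$; these rest on the same underlying fact. Drop the Shapiro-lemma ``check'', though. $\T$ is not an induced module, so Shapiro's lemma only computes the cohomology of $L^0(\R/\Z,\T)$. Restriction to $\Z$ is also not injective for $\T$-coefficients: already in degree $1$ the map $\widehat{\R}\to\widehat{\Z}$ has kernel $\Z$. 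That sketch therefore proves nothing without substantial extra work.
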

    \begin{proof}
        The short exact sequence $1 \rightarrow \Z \rightarrow \R \rightarrow \T \rightarrow 1$ gives a long exact sequence of cohomology groups of the form:
        \begin{align*}
            H^i_m(\R,\R) \rightarrow H^i_m(\R,\T) \rightarrow H^{i+1}_m(\R,\Z) \rightarrow H^{i+1}_m(\R,\R)
        \end{align*}
        Since the maximal compact subgroup of $\R$ is $\{0\}$, as in the proof of \cite[Proposition 72]{Austin_Moore}, one has $H^i_m(\R,\Z) = H^i_m(\{0\},\Z)$ for all $i \geq 1$ as an application of \cite[Proposition 62]{Austin_Moore}. Thus $H^i_m(\R,\Z) = \{0\}$ for all $i \geq 1$. By \cite[Theorem A]{Austin_Moore}, $H^n_m(\R,\R) = H^n_{\cts}(\R,\R)$ for all $n \geq 2$. By the discussion in Remark \ref{Remark: Lie algebra cohomology} below, $H^n_{\cts}(\R,\R) = \{0\}$ for $n \geq 2$. Thus from the long exact sequence we have that $H^n_m(\R,\T) = \{0\}$ for all $n \geq 2$. 
    \end{proof}

    \begin{remark}
    \label{Remark: Lie algebra cohomology}
        We briefly recall the notion of Lie algebra cohomology and refer the reader to \cite{Borel_Wallach} for a more elaborate treatment. Let $\mathfrak{g}$ be a $\R$-Lie algebra and $V$ be a $\mathfrak{g}$-module. The $n$-cochains are defined by the group of alternating multilinear maps $C^n(\mg,V) = \hom_\R(\wedge^n \mg, V)$. This cochain complex comes equipped with the usual differential $d: C^n(\mg,V) \rightarrow C^{n+1}(\mg,V)$ given by: 
    \begin{align*}
        d\omega(X_1,...,X_n) = \sum_{i=0}^{n} (-1)^i X_i \cdot & \omega(X_0,...,\widehat{X_i},...X_n) \\ & +\sum_{0 \leq i < j \leq n} (-1)^{i+j} \omega([X_i,X_j],X_0,...,\widehat{X_i},...X_n)
    \end{align*}
    This sequence is usually called the Chevalley-Eilenberg sequence and the cocycle and coboundary groups are given as usual by $Z^n(\mg,V) = \ker(d)$ and $B^n(\mg,V) = \Img(d)$. The corresponding cohomology groups are denoted $H^n(\mg,V) = Z^n(\mg,V)/ B^n(\mg,V)$. One can check that when $\mg$ is abelian and $V$ has the trivial action of $\mg$, then both the terms in the definition of $d$ vanish and hence $d = 0$, and $H^n(\mg,V) = \hom(\wedge^n \mg, V)$. When $\mg = \R$ and $V = \R$ with the trivial $\R$-action, in fact $\wedge^n \mg = 0$ for $n \geq 2$ and therefore $H^n(\mg, \R) = \{0\}$ for $n \geq 2$. By the results of \cite{Hochschild_Mostow}, since $\R$ has no compact subgroups, $H^n_{\cts}(\R,\R) = H^n(\mg = \R, \R) = \{0\}$ for $n \geq 2$. 
    \end{remark}

\subsection{Prime factorization for semifinite factors}

    As in \cite[Definition 6.2]{HoudayerMarrakchiVerraedt}, we call a pair of tensor product decompositions $M = A_1 \otimes A_2 = B_1 \otimes B_2$ \textit{unitarily conjugate} and denote it by $(A_1,A_2) \sim (B_1,B_2)$ if there is a unitary $u \in \cU(M)$ such that $uA_iu^* = B_i$ for $i \in \{1,2\}$. Similarly we say that the pairs are \textit{stably unitarily conjugate} and denote by $(A_1,A_2) \sim_\infty (B_1,B_2)$ if there are type I factors $F_1$ and $F_2$ such that $(A_1 \otimes F_1,A_2 \otimes F_2)$ is unitarily conjugate with $(B_1 \otimes F_1,B_2 \otimes F_2)$ in $M \otimes F_1 \otimes F_2$. Recall from \cite[Proposition 6.3]{HoudayerMarrakchiVerraedt} that $\sim$ and $\sim_\infty$ are both equivalence relations.

    Recall from \cite{Ozawa_Popa} that a countable non-amenable ICC group $G$ is said to be in class $\mathcal{C}$ if $G$ is either a subgroup of a hyperbolic group or a discrete subgroup of a connected simple Lie group of rank 1. The following proposition will be useful for our results in Section \ref{Subsec: tensor products}.

    \medskip     
    \begin{proposition}
    \label{Prop: unique prime factors for semifinite factors}
        Let $G$ be a group in class $\mathcal{C}$ and let $N_0 = L(G) \otimes \cB(\cH)$. Let $N = N_0 \otimes N_0$ and let $B_1 = N_0 \otimes 1 \subset N$ and $B_2 = 1 \otimes N_0 \subset N$. If $N = D_1 \otimes D_2$ for II$_\infty$ factors $D_1$ and $D_2$, then there is a permutation $\pi$ of the set $\{1,2\}$ such that $(D_1,D_2) \sim (B_{\pi(1)}, B_{\pi(2)})$.
    \end{proposition}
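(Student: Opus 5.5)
The plan is to reduce to the II$_1$ unique prime factorization theorem of Ozawa--Popa for $L(G\times G) = L(G)\otimes L(G)$, which says: if $L(G)\otimes L(G) = Q_1\otimes Q_2$ with $Q_i$ II$_1$ factors, then there are $t>0$, a permutation $\pi$ and a unitary $u$ such that $uQ_1u^* = L(G)^{t}$ and $uQ_2u^* = L(G)^{1/t}$ in the appropriate positions, i.e. the decompositions agree up to unitary conjugacy and amplification. We can then transport this to the semifinite setting by cutting down with finite projections.

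Concretely, write $N = (L(G)\otimes L(G))\otimes \cB(\cH)\otimes\cB(\cH)$. Choose finite trace projections $p_i\in D_i$ and set $q = p_1\otimes p_2 \in D_1\otimes D_2 = N$. Then $qNq = p_1D_1p_1 \otimes p_2D_2p_2$ is a tensor decomposition of a corner of $N$ into II$_1$ factors. Since $N$ is a II$_\infty$ factor, $qNq \cong (L(G)\otimes L(G))^s$ for some $s>0$. After rescaling $p_1$ (which we may choose freely since $D_1$ is II$_\infty$), we may arrange $s=1$, and choose a partial isometry identifying $qNq$ with $(1\otimes e)N(1\otimes e)=L(G)\otimes L(G)\otimes e$, where $e = e_1\otimes e_2$ is a product of rank one projections in $\cB(\cH)\otimes\cB(\cH)$. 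Applying Ozawa--Popa's theorem (valid for products of class $\mathcal{C}$ groups), we get a permutation $\pi$ and, after a unitary conjugation inside the corner, $p_1D_1p_1 = (L(G)\otimes 1)^{t}$-type corner and $p_2D_2p_2$ the complementary corner of $1\otimes L(G)$. More precisely, we get that $p_iD_ip_i$ is unitarily conjugate to a corner $f_i B_{\pi(i)} f_i$ with $f_1\otimes f_2$-type compatibility, where $f_i$ are finite projections in $B_{\pi(i)}$.

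Next we upgrade from corners to the whole II$_\infty$ factors. Since $D_i$ is generated by $p_iD_ip_i$ together with partial isometries in $D_i$ from subprojections of $p_i$ onto an orthogonal family summing to $1$ (each $D_i\cong p_iD_ip_i\otimes\cB(\ell^2)$), and similarly for $B_{\pi(i)}$, the idea is to extend the partial isometry $v$ implementing the corner conjugacy to a unitary of $N$: because $D_1' \cap N = D_2$ and $B_{\pi(1)}'\cap N = B_{\pi(2)}$, the conjugation of $p_1D_1p_1\otimes p_2D_2p_2$ onto $f_1B_{\pi(1)}f_1\otimes f_2B_{\pi(2)}f_2$ respecting tensor factors gives isomorphisms $D_i\cong B_{\pi(i)}$ (by amplification) that preserve the trace scaling compatibly; the product isomorphism $D_1\otimes D_2\to B_{\pi(1)}\otimes B_{\pi(2)}$ is a trace-preserving automorphism of $N$ agreeing with $\Ad(v)$ on the corner, hence (since trace-preserving automorphisms of a II$_\infty$ factor that are inner on a corner are inner, by the exact sequence $\Out(qNq)\to\Out(N)$ of Section~\ref{Subsec: full factors} restricted to the kernel of $\md$) it is $\Ad(u)$ for a unitary $u\in N$. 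This $u$ gives $uD_iu^* = B_{\pi(i)}$.

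The main obstacle is the middle step: making the amplification bookkeeping consistent so that the resulting automorphism of $N$ both respects the two tensor factors and is trace-preserving (the scalings $t$ and $1/t$ from Ozawa--Popa must be absorbed by choosing the projections $f_i$ of traces $\tau(p_1)t$ and $\tau(p_2)/t$). I would handle it by first fixing $\tau(p_1)$, reading off $t$, and then choosing $f_1,f_2$ with the matching traces so the product map is trace-preserving; the alternative route, citing the stable version \cite[Section 6]{HoudayerMarrakchiVerraedt} to get $\sim_\infty$ and then removing the type I factors using that all factors involved are II$_\infty$, serves as a backup.
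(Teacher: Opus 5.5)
Your proposal is correct. Its first half matches the paper: cut down by a trace-one projection $q=p_1p_2$ with $p_i\in D_i$, move $qNq$ by a partial isometry onto a corner of $N$ that is canonically $L(G)\otimes L(G)$, and apply \cite[Theorem 1]{Ozawa_Popa}.

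The two routes differ in how you get from the corner back to $N$. The paper never builds an automorphism of $N$. It rephrases the Ozawa--Popa conclusion as $(qB_{\pi(1)}q,qB_{\pi(2)}q)\sim_\infty(M_1,M_2)$ inside $qNq$. It then uses \cite[Proposition 6.3(iv)]{HoudayerMarrakchiVerraedt} to extract a partial isometry conjugating corners of the tensor factors, and transports it into $N$ as a partial isometry $V$. A second application of \cite[Proposition 6.3(iv)]{HoudayerMarrakchiVerraedt}, including its final clause for II$_\infty$ factors, gives $\sim_\infty$ and then $\sim$.

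You instead absorb the constant $t$ directly. You choose finite projections $g_i\in B_{\pi(i)}$ with traces $t$ and $1/t$ in your normalization. Ozawa--Popa then gives a partial isometry $v$ with $v^*v=q$, $vv^*=g_1g_2$, $v\,(p_1D_1p_1)p_2\,v^*=(g_1B_{\pi(1)}g_1)g_2$, and similarly for the second factor. Amplifying the two induced corner isomorphisms to $\Theta_i\colon D_i\to B_{\pi(i)}$ and tensoring them gives $\Theta\in\Aut(N)$ with $\Theta=\Ad(v)$ on $qNq$.

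For the innerness step, I would not rely on the exact sequence $1\to\Out(qNq)\to\Out(N)\to\cF(qNq)\to 1$. It only gives what you need after unwinding how the kernel of the module is identified with $\Out(qNq)$. It is cleaner to write the unitary down. Take partial isometries $w_n$ with $w_n^*w_n=q$ and $\sum_n w_nw_n^*=1$, and set $u=\sum_n\Theta(w_n)vw_n^*$. A direct computation shows $u$ is unitary and $\Ad(u)=\Theta$ on every $w_nyw_m^*$ with $y\in qNq$.

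Trace preservation is automatic once $\Theta(q)=vv^*\sim q$. The only real bookkeeping is $\tau(g_1g_2)=\tau(q)$, which you already identified.

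What each route buys: yours is self-contained and essentially reproves by hand the special case of \cite[Proposition 6.3(iv)]{HoudayerMarrakchiVerraedt} that the paper cites. It also avoids the paper's intermediate statement $(M_1,M_2)\sim_\infty(M_1^t,M_2^{1/t})$ in $qNq$, which needs interpretation since $M_1^t$ is not literally a subalgebra of $qNq$. The paper's version is shorter on the page precisely because it outsources this step.
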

    \begin{proof}
        Let $\tau_0$ be the semifinite trace on $N_0$ and let $\tau = \tau_0 \otimes \tau_0$ on $N$. Pick traces $\tau_1$ and $\tau_2$ on $D_1$ and $D_2$ respectively such that $\tau = \tau_0 \otimes \tau_0 = \tau_1 \otimes \tau_2$. Let $e_1$ and $e_2$ be finite projections in $D_1 \otimes 1$ and $1 \otimes D_2$ respectively such that $\tau_1(e_1) = \tau_2(e_2) = 1$. Let $p = e_1 e_2$ and notice that: 
        \begin{align*}
            \tau(p) = 1 \text{ and } pNp = pD_1p \otimes pD_2p 
        \end{align*}
        Similarly pick trace 1 projections $f_1$ and $f_2$ in $B_1$ and $B_2$ respectively such that letting $q = f_1f_2$, we have $\tau(q) = 1$ and $qNq = qB_1q \otimes qB_2q \cong L(G) \otimes L(G)$. Now choose a partial isometry $w_0$ such that $w_0^*w_0 = p$ and $w_0w_0^* = q$ as they have the same trace in $N$. Let $M_1 = w_0(pD_1p)w_0^*$ and $M_2 = w_0(pD_2p)w_0^*$. Notice that: 
        \begin{align*}
            M_1 \otimes M_2 = w_0(pD_1p \otimes pD_2p)w_0^* = w_0(pNp)w_0^*= qNq  
        \end{align*}
        By \cite[Theorem 1]{Ozawa_Popa}, there is a permutation $\pi$ of the set $\{1,2\}$ and $t>0$ such that $(M_1^t,M_2^{1/t} ) \sim ( qB_{\pi(1)}q, qB_{\pi(2)}q)$ in $qNq$. As observed in the paragraph before \cite[Theorem 1]{Ozawa_Popa}, one can check that $(M_1,M_2) \sim_\infty (M_i^t,M_2^{1/t})$ in $qNq$. Combining this with the previous unitary conjugacy, we get: 
        \begin{align*}
            ( qB_{\pi(1)}q, qB_{\pi(2)}q) \sim_\infty (M_1,M_2) \text{ in } qNq   
        \end{align*}
        From \cite[Proposition 6.3(iv)]{HoudayerMarrakchiVerraedt} applied to the two decompositions of $qNq$,  we get non-zero projection $r_i \in M_i$ and $s_i \in qB_{\pi(i)}q$, and a partial isometry $z \in qNq$ with $z^*z = r_1r_2$ and $zz^* = s_1s_2$ such that:
        \begin{align*}
            z((r_1M_1r_1)r_2)z^* &= (s_1 qB_{\pi(1)}qs_1)s_2 \\ z(r_1(r_2M_2r_2))z^* &= s_1(s_2 qB_{\pi(2)}qs_2)
        \end{align*}
        Notice that the maps $e_iD_ie_i \rightarrow M_i$ for $i \in \{1,2\}$ given by $x \mapsto w_0(xe_2)w_0^*$ and $y \mapsto w_0(e_1y)w_0^*$ are unital isomorphisms. Thus there are nonzero projections $d_i \in e_iD_ie_i$ such that $w_0(d_1e_2)w_0^* = r_1$ and $w_0(e_1d_2)w_0^* = r_2$. Since $e_2pe_1 = p$, we get: 
        \begin{align*}
            r_1r_2 = (w_0(d_1e_2)w_0^*)(w_0(e_1d_2)w_0^*) = w_0d_1pd_2w_0^* = w_0d_1d_2w_0^*  
        \end{align*}
        Similarly there are nonzero projections $g_i \in f_{\pi(i)}B_{\pi(i)}f_{\pi(i)}$ such that $s_1 = g_1f_{\pi(2)}$ and $s_2 = f_{\pi(1)}g_2$, and consequently $s_1s_2 = g_1g_2$. Now define the partial isometry $V = zw_0 \in N$ and calculate:
        \begin{align*}
            V^*V &= w_0^*z^*zw_0 = w_0^*r_1r_2w_0 = d_1d_2 \\
            VV^* &= zw_0w_0^*z^* = zqz^* = zz^* = g_1g_2
        \end{align*}
        Now we immediately get the following:
        \begin{align*}
            w_0((d_1D_1d_1)d_2)w_0^* &= (r_1M_1r_1)r_2 \implies V((d_1D_1d_1)d_2)V^* = (g_{1}B_{\pi(1)}g_1)g_2 \\ w_0 (d_1(d_2D_2d_2))w_0^* &= r_1(r_2M_2r_2) \implies V(d_1(d_2D_2d_2))V^* = g_1(g_2B_{\pi(2)}g_2)
        \end{align*}
        Since all the hypotheses of \cite[Proposition 6.3(iv)]{HoudayerMarrakchiVerraedt} are satisfied, for $N = D_1 \otimes D_2 = B_{\pi(1)} \otimes B_{\pi(2)}$, we get that $(D_1,D_2) \sim_\infty (B_{\pi(1)},B_{\pi(2)})$. As in the last sentence of \cite[Proposition 6.3(iv)]{HoudayerMarrakchiVerraedt}, since all the factors are type II$_\infty$, this automatically gives $(D_1,D_2) \sim (B_{\pi(1)}, B_{\pi(2)})$ as required.  
    \end{proof}

\section{Dichotomy for certain outer automorphism groups}\label{Sec: Dichotomy for Lie groups}

\subsection{Trace-scaling flows and $\R$-kernels}

Let $G$ be a second countable locally compact group and let $\delta: G \rightarrow \R$ be a continuous surjective homomorphism. Let us define a \textit{continuous split of $\delta$} as a continuous group homomorphism $s: \R \rightarrow G$ such that $\delta(s(t)) = t$ for all $t \in \R$. Two continuous splits $s_1$ and $s_2$ are \textit{conjugate} if there is a $g \in G$ such that $s_2(t) = gs_1(t)g^{-1}$ for all $t\in \R$. Let us denote by $\mathfrak{S}(\delta)$ the set of all continuous splits and $\mathfrak{S}(\delta)/G$ the equivalence classes of splittings under conjugacy. The following lemma follows from standard literature on Polish spaces, and we sketch a proof for completeness. 
\medskip
\begin{lemma}
    \label{Lemma: S(delta) Polish space and G action continuous}
    Let $G$ be locally compact and $\delta: G \rightarrow \R$ be a continuous surjective homomorphism. The set $\mathfrak{S}(\delta)$ is a Polish space with the topology of uniform convergence on compact sets and the conjugation action $G \actson \mathfrak{S}$ is continuous.
\end{lemma}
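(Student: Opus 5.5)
We realise $\mathfrak{S}(\delta)$ as a closed subset of the space $C(\R,G)$ of continuous maps $\R \to G$, equipped with the compact-open topology, which here coincides with uniform convergence on compact sets.

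First, $C(\R,G)$ is Polish. Since $G$ is locally compact second countable, it is Polish, so we may fix a compatible complete metric $d$ on $G$. The topology of uniform convergence on the compacta $[-n,n]$ is then induced by the complete metric
\begin{align*}
D(f,g)=\sum_{n\geq1}2^{-n}\min\Bigl(1,\sup_{|t|\leq n} d(f(t),g(t))\Bigr).
\end{align*}
Separability holds because $\R$ is locally compact and second countable and $G$ is second countable, so $C(\R,G)$ with the compact-open topology is second countable; alternatively, this is standard (Kechris). Hence $C(\R,G)$ is Polish.

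Second, $\mathfrak{S}(\delta)$ is closed in $C(\R,G)$. It is cut out by the conditions
\begin{align*}
s(t+u)=s(t)s(u) \quad\text{and}\quad \delta(s(t))=t \qquad \text{for all } t,u\in\R.
\end{align*}
Each of these is a closed condition on $s$. Evaluation $s\mapsto s(t)$ is continuous for the compact-open topology, multiplication on $G$ and $\delta$ are continuous, and $G$, $\R$ are Hausdorff, so each single condition defines a closed set. Intersecting over all $t,u$ gives a closed subset. A closed subset of a Polish space is Polish.

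Third, the action $G\times\mathfrak{S}(\delta)\to\mathfrak{S}(\delta)$, $(g,s)\mapsto gs(\cdot)g^{-1}$, is continuous. It lands in $\mathfrak{S}(\delta)$ because $\R$ is abelian, so $\delta(gs(t)g^{-1})=\delta(s(t))=t$. For continuity, suppose $g_k\to g$ and $s_k\to s$ uniformly on a compact $K\subset\R$. The set
\begin{align*}
L=\{g_k\}_k\cup\{g\}\cup s(K)
\end{align*}
is compact. Since $s_k\to s$ uniformly on $K$, the union $s(K)\cup\bigcup_k s_k(K)$ is relatively compact: for a compact neighbourhood $V$ of $e$, eventually $s_k(K)\subset s(K)V$. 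Conjugation $c:G\times G\to G$, $c(h,x)=hxh^{-1}$, is uniformly continuous on compact subsets of $G\times G$. From this, $\sup_{t\in K} d\bigl(g_ks_k(t)g_k^{-1},\, gs(t)g^{-1}\bigr)\to0$.

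Since $G\times\mathfrak{S}(\delta)$ is metrizable, sequential continuity suffices. The only point needing care is this uniform-continuity-on-compacta step, which is where local compactness of $G$ enters.
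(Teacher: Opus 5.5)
Your proof is correct and follows essentially the same route as the paper: you realise $\mathfrak{S}(\delta)$ as a closed subset of $C(\R,G)$ with the metric $D$, and you get continuity of conjugation from uniform continuity on a compact set containing the $g_k$ and all $s_k(K)$. You also fill in details the paper only sketches. Your choice of a complete metric $d$ on $G$ is what makes $D$ complete, and your argument that $s_k(K)\subset s(K)V$ eventually justifies the paper's ``routine calculation'' that the $s_k(K)$ lie in a fixed compact set.
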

\begin{proof}
    Since $G$ is locally compact second countable, let $d$ be a compatible left invariant metric on $G$. Let $C(\R,G)$ be the space of continuous maps with the topology of uniform convergence of compact sets. An explicit metric is given by: 
    \begin{align*}
        D(f,h) = \sum_{n=1}^{\infty} 2^{-n} \sup_{|t| \leq n} \min \{1, d(f(t),h(t))\} 
    \end{align*}
    One checks that $C(\R,G)$ is Polish with this topology. Completeness follows from taking uniform limits on the intervals $[-n,n]$ as $n \rightarrow \infty$ and separability follows from the second countability of $\R$ and $G$. Now notice that: 
    \begin{align*}
        \mathfrak{S}(\delta) = \{s \in C(\R,G) \; | \; s(t + u) = s(t) + s(u), \delta(s(t)) = t \text{ for all } t,u \in \R\}
    \end{align*}
    Thus $\mathfrak{S}(\delta)$ is an intersection of closed sets and is closed. Since closed subsets of Polish spaces are closed, $\mathfrak{S}(\delta)$ is Polish. 

    Now we check that $G \actson \mathfrak{S}(\delta)$ is continuous. Let $g_n \rightarrow g$ in $G$ and $s_n \rightarrow s$ in $\mathfrak{S}(\delta)$, we want to show that $g_n \cdot s_n \rightarrow g \cdot s$ uniformly on compact sets. Let $K \subset \R$ be a compact subset. By local compactness, pick an open neighbourhood $V$ of $g$ such that the closure $\overline{V}$ is compact and $g_n \in V$ for sufficiently large $n$. Also $s(K)$ is compact and by local compactness there is an open set $U \subseteq G$ such that $s(U) \subseteq U$ and $\overline{U}$ is compact. Since $s_n \rightarrow s$ uniformly on $K$, a routine calculation gives that for sufficiently large $n$, $s_n(K) \subseteq U$. Thus $(g_n, s_n(t))$ and $(g,s(t))$ are both in the compact set $C = \overline{V} \times \overline{U}$ for sufficiently large $n$. 

    Let $d^{(2)}((g_1,h_1),(g_2,h_2)) = \max\{d(g_1,g_2), d(h_1,h_2)\}$ be the metric on $G \times G$. Since the conjugation map $G \times G \rightarrow G$ is uniformly continuous on the compact set $C$, for each $\epsilon > 0$, we can pick $\eta > 0$ such that: 
    \begin{align*}
        ((g_1,g_2), &(h_1,h_2)) \in C \text{ and } d^{(2)}((g_1,h_1),(g_2,h_2)) < \delta \\ & \implies d(g_1h_1g_1^{-1}, g_2h_2g_2^{-1}) < \epsilon
    \end{align*}
     Now pick $n$ sufficiently large such that we have: 
     \begin{align*}
      d(g_n,g) < \eta \text{ and } \sup_{t \in K} d(s_n(t),s(t)) < \eta   
     \end{align*}
     Then for each $t \in K$ we have $d^{(2)}((g_n,s_n(t)),(g,s(t))) < \eta$. Therefore: 
    \begin{align*}
        \sup_{t \in K} (g_ns_n(t)g_n^{-1}, gs(t)g^{-1}) \leq \epsilon
    \end{align*}
    Since $\epsilon$ is arbitrary, we get that $g_n \cdot s_n \rightarrow g \cdot s$ uniformly on $K$, as required. 
\end{proof}

Now let $M$ be a full type II$_\infty$ factor and let $\Out(M)$ be the outer automorphism group. Since each automorphism $\alpha$ satisfies $\tau \circ \alpha = \md(\alpha) \cdot \tau$, and the trace is preserved by inner automorphisms, one gets a canonical continuous group homomorphism:
\begin{align*}
  \delta = -\log (\md): \Out(M) \rightarrow \R 
\end{align*}
If $\theta: \R \actson M$ is a continuous trace-scaling flow then by defining $s_\theta: \R \rightarrow \Out(M)$ by $s_\theta(t) = [\theta_t]$, one gets: 
    \begin{align*}
        \delta (s_\theta (t)) = \delta([\theta_t]) = - \log (e^{-t}) = t
    \end{align*}
Thus $s_\theta \in \mathfrak{S}(\delta)$. We want to understand what exactly are the conjugacy classes of this set.  We remark here that continuous group homomorphisms $\R \rightarrow \Out(M)$ are also called $\R$-kernels in the literature. We shall use the terminology of $\R$-kernels and their obstructions in the proof below and we refer the interested reader to \cite{Sutherland_cohomology}.

\medskip
\begin{proposition}
\label{Prop: flows and splits}
    Let $\delta = -\log (\md): \Out(M) \rightarrow \R$ be the continuous homomorphism as above for a full II$_\infty$ factor $M$ and assume that $\cF(pMp) = \R^*_+$ for a finite corner of $M$. Then the map $\theta \mapsto s_\theta$ is a bijection between $\Flow(M)/ \sim_\cc$ and $\mathfrak{S}(\delta)/\Out(M)$. In particular when $\Flow(M)$ is non-empty, we get the bijection.  
\end{proposition}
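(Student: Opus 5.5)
We will prove that $\theta \mapsto s_\theta$ is well defined on cocycle conjugacy classes, injective, and surjective, in that order.

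\emph{Well defined.} Suppose $\theta^{-1}\circ\beta_t\circ\theta = \Ad(u_t)\circ\alpha_t$. Passing to $\Out(M)$ gives $[\theta]^{-1}s_\beta(t)[\theta] = s_\alpha(t)$ for all $t$, since $\Ad(u_t)$ is inner. Hence $s_\alpha$ and $s_\beta$ are conjugate under $\Out(M)$. Continuity of $s_\theta$ is automatic: it is the composition of the continuous flow $\R\to\Aut(M)$ with the quotient map $\Aut(M)\to\Out(M)$.

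\emph{Injective.} Suppose $s_\beta = g\, s_\alpha\, g^{-1}$ with $g=[\theta]$. Replacing $\beta$ by $\theta^{-1}\beta\theta$, we reduce to the case $[\alpha_t]=[\beta_t]$ for all $t$. Then $\beta_t\alpha_t^{-1}\in\Inn(M)$, and we need unitaries $u_t$ with $\beta_t = \Ad(u_t)\alpha_t$ such that $t\mapsto u_t$ is a continuous $\alpha$-cocycle. The plan is as follows.
\begin{enumerate}
\item Since $M$ is full, $\Inn(M)\cong \cU(M)/\T$ is closed and $\cU(M)\to\Inn(M)$ is a continuous surjection of Polish groups. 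We therefore choose a Borel lift $t\mapsto u_t$ (Borel selection for closed-subgroup quotients).
\item From $\beta_{s+t}=\beta_s\beta_t$ we get $u_s\alpha_s(u_t) = c(s,t)\,u_{s+t}$ with $c$ a Borel $\T$-valued $2$-cocycle.
\item By Lemma \ref{Lemma: trivial cohomology of R}, $H^2_m(\R,\T)=0$, so $c$ is a coboundary almost everywhere. After multiplying $u_t$ by a Borel scalar function, $u$ becomes a Borel $\alpha$-cocycle almost everywhere.
\item A measurable cocycle agreeing a.e.\ with the cocycle identity can be corrected to a genuine continuous cocycle; this is the standard automatic continuity argument for measurable cocycles of continuous actions, as in Sutherland \cite{Sutherland_cohomology}.
\end{enumerate}
This shows $\alpha\sim_{\cc}\beta$. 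Steps 3 and 4 are one of the two delicate points.

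\emph{Surjective.} This is the main obstacle. Given a continuous split $s$, it is an $\R$-kernel $\R\to\Out(M)$, and we must lift it to a genuine continuous flow. Take a Borel lift $\phi_0:\R\to\Aut(M)$ of $s$, which exists because $\Aut(M)\to\Out(M)$ is a quotient of Polish groups. By the discussion preceding Lemma \ref{Lemma: trivial cohomology of R}, $s$ has an obstruction $\Obs(s)\in H^3_m(\R,\T)$, and this group vanishes by Lemma \ref{Lemma: trivial cohomology of R}. We would then invoke Sutherland's theorem \cite{Sutherland_cohomology}: an $\R$-kernel with trivial obstruction on a factor with separable predual lifts to a continuous action $\theta:\R\to\Aut(M)$ with $[\theta_t]=s(t)$. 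One first adjusts the unitaries $u_{s,t}$ by scalars to kill $\Omega$, obtaining a Borel cocycle crossed action, and then uses vanishing of $H^2$ to perturb to a genuine Borel action, which is automatically continuous.

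Since $\delta(s(t))=t$, we get $\tau\circ\theta_t = e^{-t}\tau$, so $\theta\in\Flow(M)$ and $s_\theta=s$. For the final sentence of the statement: if $\Flow(M)\neq\emptyset$, the image of $\md$ is all of $\R^*_+$, so $\cF(pMp)=\R^*_+$ and the hypothesis holds automatically. The main care throughout will go into checking that Sutherland's measurable-lifting framework applies in our setting, namely to $\R$-kernels of a full II$_\infty$ factor and with the topologies used here.
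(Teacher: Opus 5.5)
Your proposal follows the paper's proof essentially step for step: well-definedness by passing to $\Out(M)$; injectivity via a Borel lift to $\cU(M)$, killing the resulting scalar $2$-cocycle with $H^2_m(\R,\T)=0$, and Pettis-type automatic continuity; surjectivity via $H^3_m(\R,\T)=0$, Sutherland's lifting theorem \cite[Theorem 4.1.3]{Sutherland_cohomology}, and automatic continuity. One precision: untwisting the unitary-valued $2$-cocycle of the cocycle crossed action does not follow from the scalar vanishing $H^2_m(\R,\T)=0$; it comes from Sutherland's $2$-cohomology vanishing for properly infinite algebras, which applies because $M$ is of type II$_\infty$, so ``a factor with separable predual'' alone is not the right hypothesis to quote.
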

\begin{proof}
    First, note that from the discussion in Section \ref{Subsec: full factors}, $\cF(pMp) = \R^*_+$ if and only if $\delta: \Out(M) \rightarrow \R$ is surjective, so in this case every element of $\mathfrak{S}(\delta)$ is an $\R$-kernel. In particular when $\Flow(M)$ is non-empty then any element $\theta \in \Flow(M)$ satisfies that $\delta(\theta_t) = t$ and hence $\delta$ is surjective. 
    
    Now we begin the proof: notice the map is well defined for if $\theta$ and $\eta$ are cocycle conjugate, then there exists $\rho \in \Aut(M)$ and a one parameter group of unitaries $u_t$ such that:
    \begin{align*}
        \rho \circ \theta_t \circ \rho^{-1} = \Ad(u_t) \circ \eta_t
    \end{align*}
    Taking the image in $\Out(M)$, we get $[\rho] \circ s_\theta(t) \circ [\rho]^{-1} = s_\eta(t)$ and hence $s_\eta$ and $s_\theta$ are $\Out(M)$-conjugate. For injectivity, suppose that $s_\theta$ and $s_\eta$ are $\Out(M)$-conjugate, by an element $[\rho] \in \Out(M)$. Let $\beta_t = \rho \eta_t \rho^{-1} \in \Aut(M)$. Then $[\theta_t] = [\beta_t]$ and there exists a unitary $u_t \in \cU(M)$ such that $\theta_t = \Ad(u_t) \circ \beta_t$. 
    
    Now the map $\cU(M) \rightarrow \Inn(M)$ given by $u \mapsto \Ad(u)$ is a continuous surjective Polish group homomorphism and let $\sigma: \Inn(M) \rightarrow \cU(M)$ be a Borel inverse. Since $\theta$ and $\beta$ are continuous, the map $\R \rightarrow \Inn(M)$ given by $t \mapsto \theta_t \beta_t^{-1}$ is continuous and composing it with the Borel section $\sigma$, we can in fact assume that the map $t \mapsto u_t$ is Borel. It can and will be assumed that $u_0 = 1$. Now define a Borel map $w: \R \times \R \rightarrow \T$ by $w(s,t) = u_s \beta_s(u_t)u_{s + t}^{*}$ for all $s,t \in \R$. One checks that the image of $w$ is indeed in $\T < \cU(M)$ as: 
    \begin{align*}
       \Ad(u_{s+t}) \circ \beta_{s +t} &=  \theta_{s+t} = \theta_s \circ \theta_t \\  = \Ad(u_s) \circ \beta_s \circ \Ad(u_t) \circ \beta_t &= \Ad(u_s\beta_s(u_t)) \circ \beta_{s + t}
    \end{align*}
    Thus $\Ad(u_{s+t}) = \Ad(u_s \beta_s(u_t))$ and the unitaries differ by a scalar, as required. Now we show that this Borel map is indeed a 2-cocycle. Note that: 
    \begin{align*}
        (u_r \beta_r(u_s))\beta_{r+s}(u_t) &= w(r,s)u_{r+s} \beta_{r +s}(u_t) = w(r,s)w(r+s,t)u_{r+s+t} \text{ and } \\
        u_r\beta_r(u_s\beta_s(u_t)) &= u_r\beta_r(w(s,t)u_{s+t}) = w(s,t)u_r\beta_r(u_{s+t}) \\  &= w(s,t)w(r,s+t) u_{r+s+t}
    \end{align*}
    Therefore, $w(r,s)w(r+s,t) = w(s,t)w(r,s+t)$ for all $r,s,t \in \R$ and $w$ is a 2-cocycle as required. By Lemma \ref{Lemma: trivial cohomology of R}, we have $H^2(\R,\T) = \{0\}$, and hence $w$ is a coboundary. So there is a Borel map $b: \R \rightarrow \T$ satisfying $w(r,s) = b(s)b(t)\overline{b(s+t)}$ for a.e. $s,t \in \R$. Defining $v_t \coloneqq \overline{b(t)}u_t$, one checks that $\Ad(u_t) = \Ad(v_t)$ and $v: \R \rightarrow \cU(M)$ satisfies the $\beta$-cocycle identity, i.e., $v_{s+t} = v_s \beta_{s}(v_{t})$ for a.e. $s,t \in \R$. We can, and will pick an equivalent genuine cocycle and get that $\theta_t = \Ad(v_t) \circ \beta_t = \Ad(v_t) \circ \rho \circ \eta_t \circ \rho^{-1}$ for a genuine unitary $\beta$-cocycle $v$. To deduce injectivity of the map we are only left to prove $t\mapsto v_t$ is strongly continuous. This follows from a standard Polish-space argument: Consider the crossed product $\mathcal{U}(M)\rtimes_\beta \R$ which is a Polish group and note the cocycle identity precisely shows the map $\R\ni t\mapsto (v_t,t)\in \mathcal{U}(M)\rtimes_\beta \R$ is a group homomorphism. Since $(v_t)_t$ was measurable to begin with, the automatic continuity theorem of Pettis (see for example \cite[Proposition 5]{Moore_cohomology3}) ensures the assingment $t\to (v_t,t)$ is continuous, so in particular $t\to v_t$ is itself continuous. 

    We now claim that the map is surjective. Consider an $\R$-kernel $s_0 \in \mathfrak{S}(\delta)$. Since $H^3(\R,\T)$ is trivial by Lemma \ref{Lemma: trivial cohomology of R}, the obstruction $\Obs(s_0)$ is trivial. By \cite[Theorem 4.1.3]{Sutherland_cohomology}, $s_0$ admits a Borel lift $\theta: \R \rightarrow \Aut(M)$ such that $\theta$ is a Borel homomorphism. Recall that $\Aut(M)$ is a Polish group with the u-topology. Then the Borel homomorphism $\theta$ is continuous by appealing again to the automatic continuity theorem of Pettis. Notice that $\delta([\theta_t]) = t$ and hence $\theta$ is a trace-scaling continuous flow. So $\theta \in \Flow(M)$ and $s_\theta = s_0$ and that completes the proof.
\end{proof}

We end this subsection with the following lemma that ensures that we can assume that $G$ is connected for the rest of the article. We shall denote the connected component of the identity in $G$ by $G^\circ$.  
\medskip
\begin{lemma}
\label{Lemma: connected component enough}
    Let $G$ be a Lie group and $\delta: G \rightarrow \R$ is a continuous homomorphism as above and denote by $\delta_0$ the restriction of $\delta$ to $G^\circ$. If $|\mathfrak{S}(\delta_0)/G^\circ| = \epsilon$  for $\epsilon \in \{1, 2^{\aleph_0}\}$, then  $|\mathfrak{S}(\delta)/G| = \epsilon$. 
\end{lemma}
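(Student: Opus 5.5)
The plan is to show that $\mathfrak{S}(\delta)$ and $\mathfrak{S}(\delta_0)$ are literally the same set, and that $G$-orbits are countable unions of $G^\circ$-orbits. The cardinality statement then follows by elementary counting.

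\emph{Identifying the two spaces.} Let $s\in\mathfrak{S}(\delta)$. Then $s(\R)$ is a connected subset of $G$ containing the identity, so $s$ takes values in $G^\circ$. Hence $s$ is a continuous split of $\delta_0$, and the converse inclusion is obvious. So $\mathfrak{S}(\delta)=\mathfrak{S}(\delta_0)$ as sets, with the same topology of uniform convergence on compacts. In both cases $\epsilon\ge 1$, so $\mathfrak{S}(\delta_0)\neq\emptyset$. In particular $\delta_0$ is surjective, since $\delta_0(G^\circ)$ contains $\delta(s(\R))=\R$. Thus the hypothesis on $\delta_0$ makes sense, and the conjugation action of $G^\circ$ on this set is the restriction of the action of $G$.

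\emph{Comparing orbits.} $G^\circ$ is normal in $G$, so for $g\in G$ and $s\in\mathfrak{S}(\delta)$ we have $g\cdot(G^\circ\cdot s)=G^\circ\cdot(g\cdot s)$. This means $G$ permutes the $G^\circ$-orbits, and $G\cdot s=\bigcup_{gG^\circ\in G/G^\circ} G^\circ\cdot(g\cdot s)$. Since $G$ is a second countable Lie group, $G^\circ$ is open and $G/G^\circ$ is countable. Therefore every $G$-orbit is a union of at most countably many $G^\circ$-orbits. The map $\mathfrak{S}(\delta_0)/G^\circ\to\mathfrak{S}(\delta)/G$ is surjective, with fibres of cardinality at most $\aleph_0$.

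\emph{Concluding the two cases.} If $\epsilon=1$, the $G^\circ$-action is transitive, so the $G$-action is transitive and $|\mathfrak{S}(\delta)/G|=1$. If $\epsilon=2^{\aleph_0}$, let $N=|\mathfrak{S}(\delta)/G|$. Counting fibres gives $2^{\aleph_0}\le N\cdot\aleph_0$, so $N\ge 2^{\aleph_0}$. Conversely, $N\le|\mathfrak{S}(\delta)|\le 2^{\aleph_0}$, because $\mathfrak{S}(\delta)$ is a Polish space by Lemma \ref{Lemma: S(delta) Polish space and G action continuous}. Hence $N=2^{\aleph_0}$.

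There is no real obstacle here. The only points needing care are the observation that splits automatically land in $G^\circ$, and the countability of $G/G^\circ$, which uses second countability of $G$.
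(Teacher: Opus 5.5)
Your proposal is correct and follows essentially the same route as the paper: both observe that $\mathfrak{S}(\delta)=\mathfrak{S}(\delta_0)$ because continuous splits land in $G^\circ$, carry transitivity over directly, and use countability of $G/G^\circ$ to get $|\mathfrak{S}(\delta_0)/G^\circ|\le|\mathfrak{S}(\delta)/G|\cdot|G/G^\circ|$. You also state explicitly the upper bound $|\mathfrak{S}(\delta)/G|\le 2^{\aleph_0}$, which follows from Polishness and which the paper leaves implicit.
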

\begin{proof}
    Let $s \in \mathfrak{S}(\delta)$, and since $s$ is continuous the image of $s$ is in $G^\circ$, so as sets $\mathfrak{S}(\delta) = \mathfrak{S}(\delta_0)$. Clearly if the $G^\circ$-orbit of $s$ is $\mathfrak{S}(\delta_0)$ then the $G$-orbit of $s$ is $\mathfrak{S}(\delta)$. Since $G$ is a second countable Lie group, $G/G^{\circ}$ is countable. Since $|\mathfrak{S}(\delta_0)/G^\circ| \leq |\mathfrak{S}(\delta)/G| \cdot |G/G^\circ|$, we get that if $|\mathfrak{S}(\delta_0)/G^\circ| = 2^{\aleph_0}$, then $|\mathfrak{S}(\delta)/G| = 2^{\aleph_0}$, as required. 
\end{proof}

\subsection{The Lie algebra computations}
\label{Subsec: lie algebra computations}

Suppose that $G$ is a real Lie group and $\delta: G \rightarrow \R$ is a continuous Lie group homomorphism. Then $\delta$ is smooth by an application of Cartan's closed subgroup theorem (see for example, \cite[Corollary 3.50]{Hall_book}). Taking the differential gives a Lie algebra homomorphism $d\delta: \mg \rightarrow \R$ that is non-trivial as $\delta$ is surjective. We will use the following notation throughout this section: 
\begin{align*}
    \mk = \ker (d \delta) \text{ and } \ma = \{Z \in \mg \; | \; d\delta(Z) = 1\}
\end{align*}
Take an element $X \in \ma$ and notice that $\mg = \mk \oplus \R \cdot X$ as $\mk$ is a codimension-1 hyperplane. The affine hyperplane $\ma$ can be written as $X + \mk$. 
\medskip
\begin{proposition}
\label{Prop: number of orbits of a}
    Let $\ma = X + \mk \subset \mg$ as above and let $\Omega: \ma \rightarrow \mathfrak{S}(\delta)$ be the map $Z \mapsto s_Z$ where $s_Z(t) = \exp(tZ)$. Then:
    \begin{enumerate}
        \item The affine hyperplane $\ma$ is $\Ad$-invariant and hence $G \actson \ma$ by $\Ad$. 
        \item The map $\Omega$ is a $G$-equivariant bijection where $G \actson \mathfrak{S}(\delta)$ by conjugation. 
    \end{enumerate}
\end{proposition}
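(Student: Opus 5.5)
The plan is to derive both parts from the naturality of the differential $d\delta$ together with the standard fact that continuous one-parameter subgroups of a Lie group are exactly the maps $t\mapsto\exp(tZ)$.

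\textbf{Part (1).} Because $\R$ is abelian, $\delta(ghg^{-1})=\delta(h)$ for all $g,h\in G$; that is, $\delta\circ C_g=\delta$, where $C_g$ denotes conjugation by $g$. Differentiating at the identity gives $d\delta\circ\Ad(g)=d\delta$. Consequently $\Ad(g)$ preserves every level set of $d\delta$. In particular $\Ad(g)(\mk)=\mk$ and $\Ad(g)(\ma)=\ma$. Since $\Ad$ is a homomorphism, $G\actson\ma$ by $\Ad$ is a group action.

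\textbf{Part (2).} I would proceed in four steps.

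\emph{Well-definedness.} For $Z\in\ma$, the map $s_Z(t)=\exp(tZ)$ is a continuous homomorphism $\R\to G$. Using $\delta\circ\exp=\exp_{\R}\circ d\delta$ (naturality of $\exp$ applied to $\delta$), we get $\delta(\exp(tZ))=t\,d\delta(Z)=t$. Hence $s_Z\in\mathfrak{S}(\delta)$.

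\emph{Injectivity.} We have $Z=\frac{d}{dt}\big|_{t=0}s_Z(t)$, so $Z$ is recovered from $s_Z$.

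\emph{Surjectivity.} Let $s\in\mathfrak{S}(\delta)$. As a continuous homomorphism of Lie groups, $s$ is smooth. It is a one-parameter subgroup, so $s(t)=\exp(tZ)$ with $Z=s'(0)\in\mg$. Differentiating $\delta(s(t))=t$ at $t=0$ gives $d\delta(Z)=1$, so $Z\in\ma$.

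\emph{Equivariance.} We need $\Omega(\Ad(g)Z)=g\,s_Z\,g^{-1}$. This follows from the identity $g\exp(tZ)g^{-1}=\exp(t\Ad(g)Z)$, which in turn comes from naturality of $\exp$ applied to the automorphism $C_g$, whose differential is $\Ad(g)$ by definition.

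Every step is standard Lie theory. The only point requiring care is that continuous homomorphisms $\R\to G$ are smooth one-parameter subgroups, and hence of exponential form. This is the automatic smoothness fact already recalled in the preliminaries, so I expect no real obstacle.
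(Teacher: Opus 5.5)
Your proposal is correct and follows essentially the same route as the paper. You differentiate $\delta\circ C_g=\delta$ to get $\Ad$-invariance of $\ma$, use naturality of $\exp$ under $\delta$ and under $C_g$ for well-definedness and equivariance, and use the exponential form of continuous one-parameter subgroups for surjectivity; your explicit injectivity argument via $Z=s_Z'(0)$ simply spells out the uniqueness of $Z$ that the paper cites from the standard reference.
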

\begin{proof}
    Let $C_g \in \Aut(G)$ denote the homomorphism $h \mapsto ghg^{-1}$. Then notice that $\delta \circ C_g = \delta$. Differentiating this at the identity, we get $d\delta \circ \Ad(g) = d\delta$. If $Y \in \ma$, then $d\delta(Y) = 1$ and hence $d \delta(\Ad(g) Y) = 1$, so $\Ad(g)Y \in \ma$, proving 1. First let $Z \in \ma$ and consider the flow $s_Z$. We have:
    \begin{align*}
        \delta(s_Z(t)) = \delta(\exp_G(tZ)) = \exp_{\R}(d\delta(tZ)) = \exp_{\R}(t) = t
    \end{align*}
    Conversely, let $s \in \mathfrak{S}(\delta)$, then there is a unique element $Z \in \mg$ such that $s(t) = \exp(tZ)$ (for example, see \cite[Page 77]{Knapp_book}). Then we get $\delta(s(t)) = \delta(\exp(tZ)) = t$, and differentiating at 0, we get the Lie algebra homomorphism $d\delta \circ ds_0 =  \id_\R$. Evaluating both sides at $1$, we get $d\delta(ds_0(1)) = d\delta(Z)  = 1$ and hence $Z \in \ma$. Finally notice that: 
    \begin{align*}
        g\Omega(Z) & g^{-1} (t) = g\exp_G(tZ)g^{-1} = C_g(\exp_G(tZ)) \\  = &\exp_{\R}(t\Ad(g)(Z)) = \Omega(\Ad(g)Z)  
    \end{align*}
    Thus $\Omega$ is $G$-equivariant, completing the proof.
\end{proof}    

Recall than an ideal $\mathfrak{i} \subset \mg$ is called \textit{characteristic} if every derivation of $\mg$ preserves $\mathfrak{i}$. 
\medskip
\begin{proposition}
\label{Prop: solvable radical nontrivial}
    Let $\mk \subset \mg$ as before. If $\mk$ is not solvable, then $|\ma/G| = 2^{\aleph_0}$. 
\end{proposition}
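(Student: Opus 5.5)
The plan is to exhibit an $\Ad(G)$-invariant real-valued function on $\ma$ that takes a continuum of distinct values. Since $\Omega$ in Proposition \ref{Prop: number of orbits of a} is a $G$-equivariant bijection, distinct values then give distinct $G$-orbits in $\ma$. Combined with the upper bound $|\ma| \leq 2^{\aleph_0}$, this yields $|\ma/G| = 2^{\aleph_0}$. The function will be built from the Killing-type trace form on a suitable semisimple quotient.

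\textbf{Step 1: the radical of $\mk$ is a $G$-invariant ideal of $\mg$.} Let $\mr = \rad(\mk)$ be the solvable radical of $\mk$. Since $\mk = \ker(d\delta)$ is an ideal of $\mg$, each $\ad(Y)$ with $Y \in \mg$ restricts to a derivation of $\mk$. The radical is a characteristic ideal, so it is preserved by every derivation of $\mk$ and by every automorphism of $\mk$. Hence $\mr$ is an ideal of $\mg$. Moreover $\Ad(g)$ preserves $\mk$, because $d\delta \circ \Ad(g) = d\delta$ as in Proposition \ref{Prop: number of orbits of a}, and therefore $\Ad(g)$ preserves $\mr$.

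Consequently $G$ acts by $\Ad$ on $\bar\mg = \mg/\mr$, and for every $g \in G$ and $Z \in \mg$ we have $\ad(\Ad(g)Z)|_{\bar\mg} = \Ad(g)\,\ad(Z)|_{\bar\mg}\,\Ad(g)^{-1}$. So the function
\begin{align*}
F(Z) = \Tr\big( (\ad(Z)|_{\bar\mg})^2 \big)
\end{align*}
is $\Ad(G)$-invariant on $\ma$.

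\textbf{Step 2: compute $F$ on a good line in $\ma$.} By hypothesis $\ms = \mk/\mr \neq 0$ is semisimple, and it is an ideal of codimension one in $\bar\mg$. The map $\ad(X)$ induces a derivation of $\ms$. Every derivation of a semisimple Lie algebra is inner, so there is $Y \in \mk$ whose image $\bar Y \in \ms$ satisfies $\ad(X)|_\ms = \ad(\bar Y)|_\ms$.

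Set $X' = X - Y \in \ma$. Then $\bar X'$ centralizes $\ms$, so $\bar\mg = \ms \oplus \R\bar X'$ is a direct sum of Lie algebras. Choose $H \in \mk$ whose image $\bar H$ is a nonzero $\R$-split element of $\ms$, for instance a nonzero element of the $\mathfrak{a}$-part of an Iwasawa decomposition, or any $\bar H$ with $B_\ms(\bar H,\bar H) > 0$. For $t \in \R$ the element $Z_t = X' + tH$ lies in $\ma$, and $\ad(Z_t)$ acts on $\bar\mg$ as $t\,\ad(\bar H)$ on $\ms$ and as zero on $\R\bar X'$. Therefore
\begin{align*}
F(Z_t) = t^2\, B_\ms(\bar H,\bar H).
\end{align*}

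\textbf{Step 3: conclude.} Since $B_\ms$ is nondegenerate, it is not identically zero on the self-adjoint (split) part of $\ms$, so $\bar H$ can be chosen with $B_\ms(\bar H,\bar H) > 0$. Then $t \mapsto F(Z_t)$ takes every value in $[0,\infty)$. The elements $Z_t$ for $t \geq 0$ therefore lie in pairwise distinct $G$-orbits, giving $|\ma/G| \geq 2^{\aleph_0}$. The reverse inequality holds because $\ma$ is a finite-dimensional real vector space and so has cardinality $2^{\aleph_0}$.

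The main points requiring care are in Step 1 and Step 2: checking that $\mr$ is genuinely $\Ad(G)$-stable, including for non-connected $G$, where automorphism-invariance of the radical is what is needed, and the reduction that makes $\bar X'$ central modulo $\mr$. The latter rests on the inner-derivation property of semisimple algebras. The final trace computation is routine.
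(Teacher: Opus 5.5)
Your proof is correct once one inaccurate claim in Step 3 is repaired. You assert that $\bar H$ can be chosen $\R$-split, for instance in the abelian part of an Iwasawa decomposition, with $B_\ms(\bar H,\bar H)>0$. This fails when $\ms$ is compact. For $G=SU(2)\times\R$ with $\delta$ the second projection, $\ms=\mathfrak{su}(2)$ has negative definite Killing form, no nonzero split elements, and zero Iwasawa abelian part. Positivity is never actually used, though. Since $B_\ms$ is nondegenerate on $\ms\neq 0$, polarization gives some $\bar H$ with $B_\ms(\bar H,\bar H)\neq 0$. Then $t\mapsto t^2B_\ms(\bar H,\bar H)$ is still injective on $[0,\infty)$, and your conclusion stands.

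With that fix, you use the same invariant as the paper, the Killing form of $\ms=\mk/\mr$, but you organize the argument differently and close a step the paper leaves implicit. The paper shows that the elements $cH$ are pairwise non-conjugate under $\Aut(\ms)$. It then concludes that the elements $X_0+cH$ are pairwise non-conjugate under $\Ad(G)$. This does not follow for an arbitrary basepoint $X_0$: $\Ad(g)X_0-X_0$ lies in $\mk$ but need not lie in $\mr$, so $G$ acts on $\ma$ modulo $\mr$ by affine rather than linear maps. For example, take $\mg=\mathfrak{sl}_2(\R)\oplus\R$, $X_0=(-H_0,1)$ and $H=H_0=\mathrm{diag}(1,-1)$. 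Then $X_0+\tfrac12 H$ and $X_0+\tfrac32 H$ are conjugate by a Weyl element of $\SL(2,\R)$.

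Your Step 2 supplies exactly the normalization that makes the paper's sketch rigorous: you use that derivations of $\ms$ are inner to choose $X'=X-Y$ with $\bar X'$ central in $\mg/\mr$. Taking the Killing form of $\mg/\mr$ itself as the invariant is also a clean choice. It is $\Ad(G)$-invariant as soon as $\mr$ is $\Ad(G)$-stable, which your Step 1 establishes via the characteristic property, including for non-connected $G$. This spares you any analysis of how $\Ad(g)$ acts on $\ms$.

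Even the normalization is optional. $F(X+tH)$ is a quadratic polynomial in $t$ whose leading coefficient is $B_\ms(\bar H,\bar H)\neq 0$, since $\ms$ is an ideal of $\mg/\mr$. It is therefore injective on a suitable half-line, which already gives continuum many orbits.
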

\begin{proof}

    By Cartan's criterion (\cite[Proposition 1.46]{Knapp_book}), one checks that since $\mk$ is not solvable, there is an element $H \in [\mk,
    \mk]$ such that the Killing form satisfies $B(H,H) \neq 0$. Let $\mr = \rad(\mk)$ be the solvable radical and let $\ms = \mk/ \mr$ be the semisimple quotient. By abusing notation let $B$ also denote the Killing form in $\ms$ and without loss of generality, suppose that $H \in \ms$. Recall that the Killing form is preserved by any automorphism of $\ms$ (\cite[Proposition 1.119]{Knapp_book}). Now for each $c > 0$, consider the element $cH$ and notice that if $cH$ and $dH$ are conjugate by an automorphism of $\ms$, then
\begin{align*}
    B(cH,cH) = B(dH,dH) \implies c^2B(H,H) = d^2 B(H,H) \implies |c| = |d|  
\end{align*}
Therefore the set of elements $\{cH \; | \; c>0\}$ in $\ms$ are pairwise non-conjugate and consequently the set of elements $\{X_0 + cH \; | \; c> 0\}$ are not conjugate by the $\Ad$-action in $X_0 + \ms$, as required. 
\end{proof}

To complete the proof of Theorem \ref{Thm: main thm dichotomy}, we are essentially left to deal with the situation when $\mk$ is solvable. Let $\mk^{(n)}$ be the $n$-th term of the derived series. The commutator subalgebra $\mk^{(2)}$ is an ideal in $\mg$ and for what follows, we denote the quotient $\mv = \mk/ \mk^{(2)}$. Recall that the affine hyperplane $\ma =  X + \mk$ where $X \in \mg$ is an element with $d\delta(X) = 1$. Recall that $\ad(X)$ preserves the ideal $\mk$, in what follows $D$ is the derivation $\ad(X)|_{\mk}$. One checks that $D$ preserves the ideal $\mk^{(2)} = [\mk,\mk]$ and hence induces a derivation $\overline{D}$ on the quotient $\mv$.  
 \medskip 
\begin{lemma}
\label{Lemma: D is not onto}
    Let $\overline{D}$ be the derivation on $\mv$ as above. If $\overline{D}$ is not onto, then $\ma$ has $2^{\aleph_0}$ $G$-orbits under the $\Ad$-action. 
\end{lemma}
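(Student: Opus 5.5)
We will push the $\Ad$-action of $G$ on $\ma = X + \mk$ down to the quotient $\mv = \mk/\mk^{(2)}$ and show that this quotient action has only countably many orbits through each point of a suitable transversal. Since $\mk^{(2)}$ is an ideal of $\mg$ (characteristic in $\mk$, and $\mk$ is an ideal), the quotient map $\pi: \mg \to \mg/\mk^{(2)}$ is $\Ad(G)$-equivariant. It sends $\ma$ onto the affine hyperplane $\bar X + \mv$ in $\bar\mg = \mg/\mk^{(2)} = \mv \oplus \R\bar X$, and distinct $G$-orbits downstairs have distinct preimage orbits. It therefore suffices to show that $\bar X + \mv$ carries $2^{\aleph_0}$ orbits for the induced action of $G$, which factors through the connected group generated by $\exp(\bar\mg)$ by Lemma \ref{Lemma: connected component enough}. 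In $\bar\mg$ the ideal $\mv$ is abelian, and $\bar X$ acts on it by $\overline{D}$.

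Next we compute this action explicitly. For $v \in \mv$ we have $\ad(v)(\bar X + w) = -\overline{D}(v)$ and $\ad(v)^2 = 0$ on $\bar X + \mv$, since $\mv$ is abelian. Hence $\Ad(\exp v)(\bar X + w) = \bar X + w - \overline{D}v$. Also $\Ad(\exp t\bar X)(\bar X + w) = \bar X + e^{t\overline{D}}w$. Because $G^\circ$ is generated by $\exp(\mg)$, and every element of $\bar\mg$ is $v + t\bar X$, every element of the image group acts on $\bar X + \mv$ as an affine map $w \mapsto e^{t\overline{D}}w + u$ with $u \in \Img(\overline{D})$. Here we use that $\Img \overline{D}$ is $\overline{D}$-invariant, so composites stay in this form. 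Consequently the class of $w$ in $\mv/\Img(\overline{D})$ transforms by the linear map induced by $e^{t\overline{D}}$. On the cokernel $\mv/\Img\overline{D}$, however, $\overline{D}$ induces the zero map, because $\overline{D}(\mv) \subseteq \Img\overline{D}$. So $e^{t\overline{D}}$ induces the identity there, and the class $[w] \in \mv/\Img\overline{D}$ is a $G^\circ$-invariant of $\bar X + w$.

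If $\overline{D}$ is not onto, then $\mv/\Img\overline{D}$ is a nonzero real vector space, hence has cardinality $2^{\aleph_0}$. The elements $X + \tilde w$, where $\tilde w$ runs over lifts of distinct cokernel classes, lie in pairwise distinct $G^\circ$-orbits of $\ma$. By Lemma \ref{Lemma: connected component enough}, and the countability of $G/G^\circ$ used there, they give $2^{\aleph_0}$ distinct $G$-orbits. The upper bound $2^{\aleph_0}$ is automatic since $\ma$ has that cardinality.

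The main obstacle is the bookkeeping in the third step. We must check that the action of $\Ad(G^\circ)$ on $\bar\mg$ really is generated by the two families $\Ad(\exp v)$ and $\Ad(\exp t\bar X)$. We must also check that passing from $\mg$ to $\bar\mg$ loses no orbit distinctions in the direction we need. Both follow from the equivariance of $\pi$ and from $G^\circ = \langle \exp \mg \rangle$, but this is where care is needed.
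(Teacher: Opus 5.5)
Your proposal is correct and follows essentially the paper's route: the class of $Y$ in the cokernel $\mv/\overline{D}(\mv)$ is an $\Ad(G^\circ)$-invariant of $X+Y$, verified on $\exp(tX)$ and $\exp(W)$ with $W\in\mk$, and countability of $G/G^\circ$ then gives $2^{\aleph_0}$ $G$-orbits. The step you flag but only assert, that all of $G^\circ$ and not just those two families acts in this affine form, takes one line and is cleaner than the paper's appeal to the Zassenhaus formula: one has $\ad(t\bar{X}+v)(s\bar{X}+w)=t\overline{D}w-s\overline{D}v\in\Img\overline{D}$, and $\Img\overline{D}$ is $\ad(t\bar{X}+v)$-invariant, so every $\Ad(\exp Z)$ with $Z\in\mg$ induces the identity on $\bar{\mg}/\Img\overline{D}$.
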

\begin{proof}
    Since $\overline{D}$ is not surjective, the quotient $\mv/ \overline{D}(\mv)$ is non trivial. For each element $Y \in \mk$, let us denote by $[Y]$ its image in $\mv$. Consider now the following map: 
    \begin{align*}
        \pi: \ma \rightarrow \mv/\overline{D}(\mv) \text{ given by } \pi(X +Y) = [Y] + \overline{D}(\mv)
    \end{align*}
    We claim that $\pi$ is constant on all $G$-orbits. Suppose the claim is true, then consider $[Y_0] \notin \overline{D}(\mv)$. Now for arbitrary $c,d > 0$, if $X + cY_0$ and $X + d Y_0$ were in the same $G$-orbit, then $\pi(X+cY_0) = \pi(X + dY_0)$ implies that $(c-d) [Y_0] \in \overline{D}(\mv)$. This in turn implies $c = d$, thus we get a continuum of elements, no two of which are in the same $G$-orbit. So we are only left to prove the claim. 

    Recall the exponential map $\exp: \mg \rightarrow G$ is a local diffeomorphism and note since $G$ is connected $\exp(\mg)$ generates $G$. Let $H$ be the subgroup of $G$ generated by the sets $\{\exp(tX) \; | \; t \in \R\}$ and $\{\exp(W) \; | \; W \in \mk\}$. Recall that $\mg = \R X \oplus \mk$ and $\mk$ is an ideal. By a straightforward application of the Zassenhaus formula, we have $\exp(tX + Y) \in H$ for all $t \in \R$ and $Y \in \mk$. This implies that $H = G$. Thus it is enough to prove that $\pi$ is preserved by $\Ad(\exp(tX))$ for $t \in \R$ and by $\Ad(\exp(W))$ for $W \in \mk$. Notice that since $\mk$ is an ideal, for $W \in \mk$, we have that $\ad(W)^i (X + Y) \in \mk^{(2)}$ for all $i \geq 2$.   
    \begin{align*}
        &\Ad(\exp(W))(X+ Y) = \exp(\ad(W))(X + Y)  \\ = & \left( \id + \ad(W) +  \frac{1}{2!} \ad(W)^2 + \frac{1}{3!} \ad(W)^3 + ... \right) (X + Y) \\ = &  \left( X + Y + [W,X] \right) \; \md(\mk^{(2)}) \\
        = & \left( X + Y -D(W) \right) \; \md(\mk^{(2)})
    \end{align*}
    Thus $\pi(\Ad(\exp W)(X+Y)) = \pi(X+ Y - D(W)) = \pi(X+Y)$ as required. Similarly  for $t \in \R$ we have:
    \begin{align*}
        \exp(t\overline{D}) &= \id + t\overline{D} + \frac{t^2}{2!}\overline{D}^2 + \frac{t^3}{3!}\overline{D}^3 + ... \\ 
        \exp(t \overline{D}) - \id &= \overline{D}(t + \frac{t^2}{2!}\overline{D} + \frac{t^3}{3!}\overline{D}^2 ...)
    \end{align*}
    Thus $(\exp(t\overline{D}) - \id) ([X + Y]) = \exp(t\overline{D})([X+Y]) - [X + Y] \in \overline{D}(\mv)$. But since $\exp(t\overline{D})[X] = [X]$, in the quotient $\mv/\overline{D}(\mv)$ we in fact get $\exp(t\overline{D})([Y]) + \overline{D}(\mv) = [Y] + \overline{D}(\mv)$. Thus $\pi(\exp(\ad(tX))(X+ Y)) = \exp(\overline{tD})(\overline{Y}) + \overline{D}(\mv)$ and $\pi(X+Y) = [Y] + \overline{D}(\mv)$ as required. This completes the claim and the proof.
\end{proof}

For the next lemma, we denote by $\ma_1$ the linear subspace of $\ma$ given by $X + \mk^{(2)}$ and let $\mv = \mk/\mk^{(2)}$. Consider the quotient $q : \mg \rightarrow \mg/\mk^{(2)}$ and let $[X] = q(X)$. By abusing notation let $q$ still denote the quotient map $q: \ma \rightarrow [X] + \mv$. One checks that for $g \in G$, the adjoint map $\Ad(g)$ preserves $\ma_1$ and hence induces a map $\overline{\Ad}(g): [X] + \mv \rightarrow [X] + \mv$. Moreover $q$ is $G$-equivariant, i.e., $q \circ \Ad(g) = \overline{\Ad}(g) \circ q$. 
\medskip
\begin{lemma}
\label{Lemma: Lie algebra of H}
    Let $H = \{g \in G \; | \; \Ad(g)\ma_1 = \ma_1\} < G$, then $H$ is the stabilizer of $[X]$ under the $\overline{Ad}$ action. Assume that the derivation $\overline{D}$ on $\mv$ is onto. Then the Lie algebra $\mh$ of $H$ is equal to $\R X \oplus \mk^{(2)}$.
\end{lemma}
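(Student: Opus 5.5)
The plan is to handle the two assertions separately: first identify $H$ as a stabilizer, then compute $\mh$ as a kernel.

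\emph{$H$ is the stabilizer of $[X]$.} The set $\ma_1 = X + \mk^{(2)}$ is exactly the preimage $q^{-1}([X])$ under $q:\ma \rightarrow [X]+\mv$. Using the equivariance $q\circ \Ad(g) = \overline{\Ad}(g)\circ q$, I would show:
\begin{itemize}
\item if $\Ad(g)\ma_1 = \ma_1$, then $\overline{\Ad}(g)[X] = q(\Ad(g)X) = [X]$;
\item conversely, if $\overline{\Ad}(g)[X]=[X]$, then $q(\Ad(g)(X+W)) = [X]$ for every $W\in\mk^{(2)}$, so $\Ad(g)\ma_1\subseteq \ma_1$;
\item applying the same argument to $g^{-1}$ gives equality.
\end{itemize}
This also shows $H$ is a closed subgroup, being the stabilizer of a point under a continuous action, so it is a Lie subgroup by Cartan's closed subgroup theorem.

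\emph{Lie algebra of $H$.} For a closed subgroup, $\mh = \{Z\in\mg \,|\, \exp(tZ)\in H \text{ for all } t\}$. The condition $\exp(tZ)\in H$ means $\overline{\Ad}(\exp tZ)[X]=[X]$ for all $t$. Differentiating at $t=0$, and conversely integrating via $\overline{\Ad}(\exp tZ) = \exp(t\,\overline{\ad}(Z))$, reduces this to the linear condition $[Z,X]\in\mk^{(2)}$. So
\begin{align*}
\mh = \{Z\in\mg \,|\, [Z,X]\in \mk^{(2)}\}.
\end{align*}
Here $\overline{\ad}(Z)$ is the induced action on $\mg/\mk^{(2)}$, which is well defined since $\mk^{(2)}$ is an ideal of $\mg$. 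For the converse direction: if $\overline{\ad}(Z)[X]=0$, then every term $\overline{\ad}(Z)^n[X]$ with $n\geq 1$ vanishes, so the exponential fixes $[X]$.

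\emph{Computing the kernel.} Write $Z = aX + W$ with $W\in\mk$. Then $[Z,X] = [W,X] = -D(W)$, so $Z\in\mh$ iff $D(W)\in\mk^{(2)}$, i.e.\ $\overline{D}([W])=0$. Hence $\mh = \R X \oplus \{W\in\mk \,|\, [W]\in\ker\overline{D}\}$.

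\emph{Using surjectivity.} Since $\mv$ is finite dimensional and $\overline{D}$ is onto, $\overline{D}$ is injective. Thus $\ker\overline{D}=0$, giving $[W]=0$, i.e.\ $W\in\mk^{(2)}$, and therefore $\mh = \R X\oplus\mk^{(2)}$.

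The only mildly delicate point is justifying the passage between the group-level stabilizer condition and the Lie-algebra condition. I expect the standard identification of the Lie algebra of a closed subgroup as $\{Z \,|\, \exp(\R Z)\subseteq H\}$, combined with $\Ad\circ\exp = \exp\circ\ad$ passed to the quotient, to handle it directly.
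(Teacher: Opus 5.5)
Your proposal is correct and follows essentially the same route as the paper. Both arguments identify $H$ with the stabilizer of $[X]$ via $q^{-1}([X])=\ma_1$ and the equivariance of $q$, then describe $\mh$ through the condition $\exp(\R Z)\subseteq H$, which reduces to $[Z,X]\in\mk^{(2)}$, i.e.\ $\overline{D}([W])=0$, and conclude because a surjective endomorphism of the finite-dimensional space $\mv$ is injective. Your added remark that $H$ is closed, which justifies describing $\mh$ by one-parameter subgroups, is a small point the paper leaves implicit.
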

\begin{proof}
    First let us show that $H = \Stab([X])$. Suppose first that $\Ad(g) \ma_1 = \ma_1$. Since $X \in \ma_1$, then $\Ad(g) X \in \ma_1$ and therefore: 
    \begin{align*}
        \overline{\Ad}(g) ([X]) = q(\Ad(g)X) \in q(\ma_1) = [X] \text{ so } H \subseteq \Stab([X])
    \end{align*}
    Conversely let $g \in \Stab([X])$ and let $Z \in \ma_1$. Notice that $q(\Ad(g)Z) = \overline{\Ad}(g)(q(Z)) = \overline{\Ad}(g)([X]) = [X]$. Since $q^{-1}([X]) = \ma_1$ we have that $\Ad(g)(\ma_1) \subseteq \ma_1$. Taking $g^{-1}$ instead of $g$, we have $\Ad(g)\ma_1 = \ma_1$ as required, so $H = \Stab([X])$. 
    
    Now $\mh$ is the Lie algebra of $H$ (or equivalently the connected component at the identity $H^\circ$) and recall that:
    \begin{align*}
        \mh \coloneqq \{Z \in \mg \; | \; \exp_G(tZ) \in H \text{ for all } t\in \R\}
    \end{align*}
    Let $Z  = cX + W \in \mg$ where $c \in \R$ and $W \in \mk$, so that $[Z] = c[X] + [Y]$. Then for each $t \in \R$ and $Z \in \mg$ we have:  
    \begin{align*}
        \exp(t \ad([Z])) ([X]) = \Ad(\exp(tZ))([X]) = [X] \iff Z \in \mh
    \end{align*}
    Moreover $\exp(t \ad([Z]))([X]) = [X]$ if and only if $[[Z],[X]] = 0$. Indeed, notice that differentiating $\exp(t \ad([Z]))([X]) = [X]$ at $t = 0$ gives $[[Z],[X]] = 0$ and conversely if $\ad([X])([Z]) = 0$, then all positive powers of $\ad([Z])$ evaluated at $[X]$ is zero, implying that $\exp(t \ad([Z]))([X]) = [X]$. But we have: 
    \begin{align*}
        [[Z],[X]] = [c[X] + [Y], [X]] = [[Y],[X]] = -\overline{D}([Y])
    \end{align*}
    Therefore $Z \in \mh$ if and only if $\overline{D}([Y]) = 0$. Since $\overline{D}$ is onto, it is an invertible linear map and hence $\overline{D}([Y]) = 0 $ if and only if $[Y] = 0$. Thus $\mh = \R X \oplus \mk^{(2)}$ as required.  
\end{proof}

\subsection{Proof of the main theorem}

We now have all the necessary ingredients to prove Theorem \ref{Thm: main thm dichotomy}. Assume the notation on Lie algebras from Section \ref{Subsec: lie algebra computations}. We split the proof in two propositions: 
\medskip
\begin{proposition}
    \label{Main theorem: Lie group case}
    Let $G$ be a second countable finite dimensional real Lie group and $\delta: G \rightarrow \R$ be a continuous surjective homomorphism. Then $|\mathfrak{S}(\delta)/G| = 1$ or $2^{\aleph_0}$. 
\end{proposition}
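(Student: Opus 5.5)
We argue by induction on $\dim \mk$, reducing everything to the affine hyperplane picture. By Lemma \ref{Lemma: connected component enough} we may assume $G$ is connected. By Proposition \ref{Prop: number of orbits of a} it then suffices to show $|\ma/G| \in \{1, 2^{\aleph_0}\}$ for the $\Ad$-action on $\ma = X + \mk$.

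\textbf{The easy cases.} If $\mk$ is not solvable, Proposition \ref{Prop: solvable radical nontrivial} gives $2^{\aleph_0}$ orbits. If $\mk$ is solvable but the induced derivation $\overline{D}$ on $\mv = \mk/\mk^{(2)}$ is not onto, Lemma \ref{Lemma: D is not onto} again gives $2^{\aleph_0}$ orbits. So the remaining case is: $\mk$ solvable and $\overline{D}$ surjective, hence invertible. If $\mk = 0$ then $\ma = \{X\}$ is a single orbit. This is the base of the induction on $\dim\mk$, which we run over all pairs $(G,\delta)$ with $G$ a second countable Lie group.

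\textbf{Step 1: push every point into $\ma_1$.} Assume $\mk \neq 0$ is solvable and $\overline{D}$ is onto. For $W \in \mk$, the computation in the proof of Lemma \ref{Lemma: D is not onto} gives
\[
\Ad(\exp W)(X+Y) \equiv X + Y - D(W) \pmod{\mk^{(2)}}.
\]
Given $X+Y \in \ma$, choose $W$ with $\overline{D}([W]) = [Y]$. Then $\Ad(\exp W)(X+Y) \in \ma_1 = X + \mk^{(2)}$, so every $G$-orbit in $\ma$ meets $\ma_1$.

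\textbf{Step 2: $G$-orbits on $\ma$ match $H$-orbits on $\ma_1$.} Let $H$ be as in Lemma \ref{Lemma: Lie algebra of H}. Suppose $Z \in \ma_1$ and $\Ad(g)Z \in \ma_1$. Then $\overline{\Ad}(g)[X] = q(\Ad(g)Z) = [X]$, so $g \in \Stab([X]) = H$. Combined with Step 1, the $G$-orbits in $\ma$ are in bijection with the $H$-orbits in $\ma_1$.

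\textbf{Step 3: apply induction to $H$.} As a stabilizer, $H$ is a closed subgroup of $G$, hence a second countable Lie group with countably many components. By Lemma \ref{Lemma: Lie algebra of H}, its Lie algebra is $\mh = \R X \oplus \mk^{(2)}$. The restriction $\delta|_H$ is still surjective, since $\exp(tX) \in H$ and $\delta(\exp(tX)) = t$. Its differential has kernel $\mk^{(2)}$ and level-one affine hyperplane exactly $\ma_1$. Since $\mk$ is solvable and nonzero, $\dim \mk^{(2)} < \dim \mk$. The induction hypothesis, together with Lemma \ref{Lemma: connected component enough} to pass from $H^\circ$ to $H$ and Proposition \ref{Prop: number of orbits of a} applied to $(H,\delta|_H)$, gives $|\ma_1/H| \in \{1, 2^{\aleph_0}\}$. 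By Step 2, $|\ma/G| \in \{1, 2^{\aleph_0}\}$.

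\textbf{Main obstacle.} The delicate point is Step 3's identification. One must check that the Lie-algebraic data attached to $(H,\delta|_H)$ is precisely $(\mk^{(2)}, \ma_1)$, with the $\Ad_H$-action on $\ma_1$ being the restriction of the $\Ad_G$-action. One must also handle the passage between $H$ and $H^\circ$ via Lemma \ref{Lemma: connected component enough}, which requires countably many components. I expect both to follow directly from Lemma \ref{Lemma: Lie algebra of H} and closedness of $H$.
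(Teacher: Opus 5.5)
Your proof is correct and follows the paper's argument. You reduce to connected $G$ and to the $\Ad$-action on $\ma$, dispose of the non-solvable case and the case where $\overline{D}$ is not onto, push every orbit into $\ma_1$, identify $G$-orbits on $\ma$ with $H$-orbits on $\ma_1$, and then recurse on $(H,\delta|_H)$, whose kernel is $\mk^{(2)}$. The only difference is cosmetic: you induct on $\dim\mk$ with base case $\mk=0$, which absorbs the paper's separate abelian case into the general step, whereas the paper inducts on the derived length of $\mk$.
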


\begin{proof}
    By Lemma \ref{Lemma: connected component enough}, we can assume that $G$ is connected. Let $\mg$ be the Lie algebra of $G$ and $\mk = \ker(d\delta)$. let $\ma = X + \mk$ and by Proposition \ref{Prop: number of orbits of a}, $|\mathfrak{S}(\delta)/G| = |\ma/G|$ where $\ma/G$ denotes the orbits of $\ma$ under the $\Ad$-action. If $\mk$ is not solvable then $|\ma/G| = 2^{\aleph^0}$ from Proposition \ref{Prop: solvable radical nontrivial}. So we can assume that $\mk$ is solvable. 
    
    We use induction on the length of the derived series of $\mk$. Suppose first that the length is 1, i.e., $\mk$ is abelian and $\mk^{(2)} = [\mk,\mk] = 0$. Let $D = \ad(X)|_{\mk}$, and if $D$ is not onto then $|\ma/G| = 2^{\aleph^0}$ by Lemma $\ref{Lemma: D is not onto}$. If $D$ is onto, let $Y \in \mk$ be arbitrary and let $W \in \mk$ be such that $D(W) = Y$. Since $W,Y \in \mk$, therefore $[W,Y] = 0$ and $[W,X] = -D(W)$. Therefore $\ad(W)(X + Y) = -D(W)$. Notice that for $i \geq 2$, $\ad(W)^k (X + Y) = 0$ since $\mk$ is abelian. Therefore: 
    \begin{align*}
        &\Ad(\exp(W))(X+Y) = \exp(\ad(W))(X+Y) \\ = X &+ Y + \ad(W)(X+Y) = X + Y -D(W)  = X
    \end{align*}
    Thus the orbit of every element of $\ma = X + \mk$ contains $X$ and hence $|\ma/G| = 1$. 

    Now suppose the result holds if $\mk$  has a derived series of length less than $n$. Let $\mk^{(2)} = [\mk,\mk]$ as before. Let $\mv = \mk / \mk^{(2)}$ and let $\overline{D}$ be the induced derivation on $\mv$. If $\overline{D}$ is not onto, then again by Lemma \ref{Lemma: D is not onto} we have $|\ma/G| = 2^{\aleph_0}$. Suppose now that $\overline{D}$ is onto and let $\ma_1 = X + \mk^{(2)} \subset \ma$. Now let $X +Y \in \ma$, we claim that the $G$-orbit of $X + Y$ intersects $\ma_1$ non-trivially. To see this let $[Y] \in \mv$ be the image of $Y$. Since $\overline{D}$ is onto, we can pick an element $W \in \mk$ such that $\overline{D}([W]) = [Y]$. Similar to the abelian case, we get: 
    \begin{align*}
        \Ad(\exp(W))(X + Y) = X + Y - D(W) = X \; \; (\md \;  \mk^{(2)})
    \end{align*}
    Therefore $\Ad(\exp(W)) \in X + \mk^{(2)} = \ma_1$, proving the claim. Let $H = \{h \in G \; | \; \Ad(h)\ma_1 = \ma_1 \}$ and by Lemma \ref{Lemma: Lie algebra of H}, the Lie algebra $\mh$ is $\R X \oplus \mk^{(2)}$.
    
    Also by Lemma \ref{Lemma: Lie algebra of H}, two elements of $\ma_1$ are $G$-conjugate if and only if they are $H$-conjugate. Indeed notice that if $\Ad(g)Z_1 = Z_2$ for $g \in G$ and $Z_1,Z_2 \in \ma_1$, then applying the quotient map $q$, we get: 
    \begin{align*}
      \overline{\Ad}(g)([X]) =\overline{\Ad}(g)(q(Z_1)) =q(\Ad(g)Z_1) = q(Z_2) = [X]
    \end{align*}
    Thus $g \in H$ as required. By the previous claim, every $G$-orbit in $\ma$ intersects $\ma_1$ and this in fact implies that $|\ma/G| = |\ma_1/ H|$. Letting $H^\circ$ be the connected component of the identity in $H$, we have that $|\ma_1/H^\circ| = 1 $ or $2^{\aleph_0}$ by the induction hypothesis. Thus by Lemma \ref{Lemma: connected component enough}, we have $|\ma_1/H| = |\ma/G| = 1$ or $2^{\aleph_0}$, thus completing the proof.   
\end{proof}

\begin{proof}[Proof of Theorem \ref{Main theorem: general lc group case}]
    By Lemma \ref{Lemma: S(delta) Polish space and G action continuous}, $\mathfrak{S}(\delta)$ is a Polish space and $G \actson \mathfrak{S}(\delta)$ is continuous. By \cite[Lemma 1.3]{HofmannWuYang}, $\mathfrak{S}(\delta)$ is non-empty. For contradiction, assume that $|\mathfrak{S}(\delta)/G| < 2^{\aleph_0}$. By the usual Glimm-Effros dichotomy theorem (see \cite[Theorem 3.4.2]{Becker_Kechris}), the orbit equivalence relation on $\mathfrak{S}(\delta)$ is smooth and admits a Borel selector. 

    Now use the Gleason-Yamabe theorem to pick an open subgroup $H<G$ and a sequence of compact normal groups $K_m < H$ such that $\bigcup_m K_m = \{e\}$ and $H/K_m$ is a Lie group for all $m$. Since a continuous surjective homomorphism between Polish groups is automatically an open map, $\delta(H)$ is open in $\R$ and hence $\delta(H) = \R$. Since $K_m$ is compact, $\delta(K_m) = \{e\}$ for each $m$. Thus we obtain (still surjective continuous) group homomorphisms $\delta_m : H/K_m \rightarrow \R$. Since every open subgroup of a locally compact group is closed, $H<G$ is closed and hence once again by \cite[Theorem 3.4.2]{Becker_Kechris}, the orbit equivalence relation of $H \actson \mathfrak{S}(\delta)$ is smooth and every orbit of $H \actson \mathfrak{S}(\delta)$ is a $G_\delta$-subset of $\mathfrak{S}(\delta)$. 
    
    If $H \actson \mathfrak{S}(\delta)$, then of course so does $G \actson \mathfrak{S}(\delta)$. Suppose that $H \actson \mathfrak{S}(\delta)$ has at least two orbits. By the previous paragraph, both orbits are $G_\delta$-subsets. Since orbits are disjoint, they cannot be both dense by Baire category theorem. `Let $s \in \mathfrak{S}(\delta)$ be such that $H \cdot s$ is not dense and let $s' \in \mathfrak{S}(\delta)$ be such that $s' \notin \overline{H \cdot s}$. Since $H<G$ is open and since the only open subgroup of $\R$ is $\R$, we have that $s(\R) = s'(\R) \subset H$. Thus it follows that $s$ and $s'$ are both in $\mathfrak{S}(\delta_m)$ for all $m$. We claim now that there is a positive integer $m_0$ such that $s'_{m_0} \notin (H/K_{m_0}) \cdot s_{m_0}$. Suppose not, then for each $m \in \N$ we find $h_m \in H$ such that: 
    \begin{align*}
        h_m s(t) h_m^{-1} = s'(t) \; \md(K_m) \text{ for all } t\in \R
    \end{align*}
    Therefore $h_m \cdot s \rightarrow s'$ in $\mathfrak{S}(\delta)$ which contradicts our assumption, thus proving the claim. Therefore $s_{m_0}' \notin (H/K_{m_0}) \cdot s_{m_0}$ and by Proposition \ref{Main theorem: Lie group case}, we get that $|\mathfrak{S}(\delta_{m_0})/ (H/K_{m_0}) | = 2^{\aleph_0}$. Notice that the map $\mathfrak{S}(\delta) \rightarrow \mathfrak{S}(\delta_{m_0})$ is $H$-equivariant, and since these are Polish spaces the map is surjective (again by \cite[Lemma 1.3]{HofmannWuYang}). Thus we have $|\mathfrak{S}(\delta)/H| = 2^{\aleph_0}$. Since $H<G$ is open, by second countability we have that $G/H$ is countable and hence $|\mathfrak{S}(\delta)/G| = 2^{\aleph_0}$.
\end{proof}

\begin{proof}[Proof of Theorem \ref{Thm: main thm dichotomy}]
       Let $G = \Out(M)$ be a locally compact second countable group and let $\delta = - \log (\md): G \rightarrow \R$. Since $\cF(pMp) = \R^*_+$, $\delta$ is surjective.  By Proposition \ref{Prop: flows and splits}, it is enough to prove that $|\mathfrak{S}(\delta)/G| = 1$ or $2^{\aleph_0}$. This now follows from Theorem \ref{Main theorem: general lc group case}. 
\end{proof}

\subsection{Applications}

We give two applications of the results of this section that illustrate the two conclusions of the dichotomy. Recall from \cite{Chakraborty25} the construction of a full III$_1$ factor $M_1 = L(\cR_1)$ with $\Out(M_1) = \R^*_+$. Let $M_C = L(\cR_C)$ be the continuous core (and Maharam extension) respectively. Following the proofs in \cite{Chakraborty25}, one can easily see that $\Out(M_C)$ is also $\R^*_+$. We give a short proof here. Since the construction is rather tedious, we refer the reader to \cite[Constructions 3.1 and 3.2]{Chakraborty25} and do not recall the details here.
\medskip
\begin{proposition}
\label{Prop: Chakraborty construction}
    Let $M_C = L(\cR_C)$ denote the full II$_\infty$ factor constructed in \cite{Chakraborty25}. Then $M_C$ has a unique continuous trace-scaling flow up to cocycle conjugacy. 
\end{proposition}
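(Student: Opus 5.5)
The plan is to reduce everything to Proposition \ref{Prop: flows and splits} and the abelian base case of Proposition \ref{Main theorem: Lie group case}. The strategy is to show first that $\Out(M_C)\cong\R$ and that, under this identification, $\delta=-\log(\md)$ is a nonzero linear map, hence an isomorphism $\R\to\R$. Once this holds, $\mathfrak{S}(\delta)$ is a single point: a continuous splitting must be $t\mapsto\delta^{-1}(t)$. So $|\mathfrak{S}(\delta)/\Out(M_C)|=1$. Existence of at least one flow is automatic, since $M_C$ is the continuous core of the III$_1$ factor $M_1$ and carries the dual flow. In particular $\cF(pM_Cp)=\R^*_+$, and Proposition \ref{Prop: flows and splits} then gives $|\Flow(M_C)/\sim_{\cc}|=1$.

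The substance is computing $\Out(M_C)$. I would argue through the Maharam extension. Since $M_C=L(\cR_C)$ with $\cR_C$ the Maharam extension of $\cR_1$, the dual flow $\theta$ gives a continuous homomorphism $s_\theta:\R\to\Out(M_C)$ with $\delta\circ s_\theta=\id$. It therefore suffices to show that $\ker(\delta)=\Out(pM_Cp)$ is trivial, i.e.\ every trace-preserving automorphism of $M_C$ is inner.

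For this I would follow the rigidity argument of \cite{Chakraborty25}. That argument shows $\Out(M_1)=\R^*_+$ by first proving that every automorphism of $L(\cR_1)$ preserves the Cartan subalgebra up to unitary conjugacy, and then computing the outer automorphism group of the equivalence relation. I would run the same steps for the semifinite relation $\cR_C$. First, a trace-preserving automorphism of $M_C$ should be unitarily conjugated to normalize the Cartan subalgebra $L^\infty$ of $\cR_C$; the deformation/rigidity input used in \cite{Chakraborty25} is expected to apply verbatim on finite corners. Second, it then comes from an automorphism of $\cR_C$ twisted by a $1$-cocycle. Third, the measure-preserving automorphisms of $\cR_C$ are inner up to the $\R$ coming from the flow, and the relevant cocycle cohomology vanishes.

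The main obstacle is this last step: showing that the trace-preserving outer automorphisms of $\cR_C$ are trivial, with no extra symmetry beyond the dual flow. Equivalently, one must show $\Out(M_C)$ is exactly the image of $s_\theta$. I would try to deduce this from $\Out(M_1)=\R^*_+$ itself. Any automorphism of $M_C$ commuting with $\theta$ up to cocycle descends to $M_1$, where the only outer ones come from $\R^*_+$, which act on the core through the dual flow itself (for the modular part). It then remains to show that every automorphism of $M_C$ commutes with $\theta$ up to inner, using that $\Out(M_C)$ is abelian. That abelianness is what I would extract from the explicit construction in \cite[Constructions 3.1 and 3.2]{Chakraborty25}.
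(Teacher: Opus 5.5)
Your reduction is sound and is exactly how the paper finishes. If $\delta$ is injective on $\Out(M_C)$, every continuous splitting equals $\delta^{-1}$, so $\mathfrak{S}(\delta)$ is a point and Proposition \ref{Prop: flows and splits} gives uniqueness. Existence via the dual flow is also fine. The gap is that the entire content of the statement is the claim $\ker(\delta)=\{e\}$, and you do not prove it. Your three steps (unique Cartan, Feldman--Moore, measure-preserving automorphisms of $\cR_C$) are the paper's skeleton. However, the decisive third step is left as ``the main obstacle'', and the vanishing of the $1$-cohomology is asserted without reason.

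The paper supplies all three ingredients. $M_C$ has a unique Cartan subalgebra by \cite[Theorem 8.1]{Vaes14}. $H^1(\cR,\T)$ is trivial because $\cR$ is $\cU_{\mathrm{fin}}$-cocycle superrigid \cite[Lemma 4.4]{Chakraborty25} and $\Gamma$ is perfect, so $\Out(M_C)=\Out(\cR_C)$. Finally, let $\theta\in\Aut(\cR_C)$ preserve the measure:
\begin{enumerate}
\item \cite[Proposition 5.4]{Chakraborty25} gives $\phi\in[\cR]$ with $\phi\circ\theta$ commuting with the $\Gamma$-action;
\item \cite[Proposition 5.5]{Chakraborty25} identifies such maps as dilations $D_s(x,y)=(x,sy)$;
\item $\mathrm{mod}(D_s)=s^{-6}$ forces $s=\pm1$, and $D_{-1}\in[\cR_C]$.
\end{enumerate}
Hence $\ker(\mathrm{mod})=[\cR_C]$. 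The identification of the $\Gamma$-equivariant automorphisms as dilations is the missing idea. Incidentally, ``measure-preserving automorphisms are inner up to the $\R$ coming from the flow'' is not the right target, since no $\theta_t$ with $t\neq 0$ preserves the measure. What must be shown is $\ker(\mathrm{mod})=[\cR_C]$.

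Your fallback through $\Out(M_1)=\R^*_+$ cannot close the gap. Grant your descent step. Automorphisms of $M_1$ only see those automorphisms of $M_C$ that commute with $\theta$ modulo inner ones. So what you can extract is only that the centralizer of $s_\theta(\R)$ inside $\ker(\delta)$ is trivial. That does not force $\ker(\delta)$ to be trivial. For example, take the $ax+b$ group $G=\R\rtimes\R$, where $t$ acts on $\R$ by multiplication by $e^{t}$, and let $\delta$ be the projection onto the second factor. The splitting $t\mapsto(0,t)$ has trivial centralizer in $\ker(\delta)\cong\R$.

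The extra input you propose, abelianness of $\Out(M_C)$, is (given the descent step) equivalent to $\ker(\delta)=\{e\}$. Abelianness would place all of $\ker(\delta)$ inside that trivial centralizer. So invoking it is circular, and ``extracting it from the explicit construction'' is precisely the computation via \cite[Propositions 5.4 and 5.5]{Chakraborty25} that is missing from your proposal.
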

\begin{proof}
    Let $\cR$ be the orbit equivalence relation of the infinite measure preserving action $\Gamma \actson (X \times Y, \eta)$ from \cite[Construction 3.1]{Chakraborty25}. Exactly in a way similar to the computations of the outer automorphism group of the type III$_1$ factor whose core is $M_C$ in \cite{Chakraborty25}, one can compute $\Out(M_C)$. Indeed, by an application of \cite[Theorem 8.1]{Vaes14} (c.f. \cite[Theorem 5.6]{Chakraborty25}), we get that $M_C$ has a unique Cartan subalgebra. Moreover by \cite[Lemma 4.4]{Chakraborty25}, the equivalence relation $\cR$ is $\cU$-fin cocycle superrigid and since the acting group $\Gamma$ is perfect, the cohomology group $H^1(\cR,\T) = \{e\}$. Therefore by the usual Feldman-Moore correspondence of automorphism groups, we get $\Out(M_C) = \Out(\cR_C)$. Let $\theta \in \Aut(\cR_C)$, then by ergodicity of $\cR_C$, we get a constant $\md(\theta) \in \R^*_+$ such that $\theta_*\eta = \md(\theta) \cdot \eta$. Thus we get a homomorphism $\md: \Aut(\cR_C) \rightarrow \R^*_+$. If $\theta \in [\cR_C]$ then it preserves the measure $\eta$ and hence $[\cR_C] \subseteq \ker(\md)$. 

    Now suppose $\md(\theta) = 1$ for some $\theta \in \Aut(\cR_C)$. Then by \cite[Proposition 5.4]{Chakraborty25}, there is an element $\phi \in [\cR]$ such that $\phi \circ \theta$ commutes with the $\Gamma$-action. Since $\phi$ and $\theta$ are both measure preserving, by \cite[Proposition 5.5]{Chakraborty25}, $\phi \circ \theta$ is a dilation $D_s$ given by $D_s(x,y) = (x,sy)$ for some $s \neq 0$. Since $D_s$ needs to be measure preserving, we have that $s = \pm 1$. Exactly as in the proof of \cite[Proposition 5.9]{Chakraborty25}, $D_{-1} \in [\cR_C]$. Thus $\phi \circ \theta \in [\cR_C]$ and hence $\theta \in [\cR_C]$, so $[\cR_C] = \ker(\md)$. Moreover one checks that $\md(D_s) = s^{-6}$ and $s$ ranges over $\R^*$, we get that $\md: \Aut(\cR_C) \rightarrow \cR^*_+$ is onto. Thus by the first isomorphism theorem we have $ \Out(\cR_C) = \Aut(\cR_C)/ [\cR_C] \cong \R^*_+$ as required. Now $\delta: \Out(M_C) \rightarrow \R$ in this case is an isomorphism of topological groups and this immediately gives that a splitting of $\delta$ must be given by $\delta^{-1}$, so $\mathfrak{S}(\delta)$ has a unique element. So by Proposition \ref{Prop: flows and splits} we get that up to cocycle conjugacy there is a unique trace-scaling continuous flow.  
\end{proof}

Now we shall see a much richer application of Theorem \ref{Thm: main thm dichotomy}. The following construction appears in \cite{Deprez12}. Let $n \geq 1$ and let $m = 4n + 1$. Let $\Lambda = \SL(m,\Z)$, $Y = M_{m,n}(\R)$ and $\nu$ be the Lebesgue measure on $M_{m,n}(\R)$ from its bijection with $\R^{mn}$. Consider the left multiplication action $\Lambda \actson (Y,\nu)$. One checks that this action is essentially free, ergodic and preserves the infinite measure $\nu$. In what follows we shall denote by $\Aut_{\MP}(\Lambda \actson Y)$ and $\Aut_{\ns}(\Lambda \actson Y)$ the Polish groups of measure-preserving and non singular automorphisms commuting with the action respectively. 
\medskip
\begin{proposition}
\label{Prop: Deprez construction}
    For each $n \geq 1$, there is a full II$_\infty$ factor $M_n$ with $\Out(M_n) = \GL(n,\R)$. For $n \geq 2$, $|\Flow(M_n)/\sim_{\cc} | = 2^{\aleph_0}$ and for $n = 1$, we have $|\Flow(M_n)/\sim_{\cc}| = 1$.     
\end{proposition}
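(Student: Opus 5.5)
The plan is to identify $\Out(M_n)$ with $\GL(n,\R)$ together with its module map, and then read off both conclusions from the Lie algebra results of Section \ref{Sec: Dichotomy for Lie groups}. Let $M_n = (L^\infty(Y)\rtimes \Lambda)$, which is a II$_\infty$ factor because $\nu$ is infinite; it is also an amplification of the II$_1$ factors of \cite[Theorem 8.4]{Deprez12}, obtained by cutting down by a finite measure subset of $Y$.

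\textbf{Step 1: computing $\Out(M_n)$.} I would follow the argument used for $M_C$ in Proposition \ref{Prop: Chakraborty construction}.
\begin{enumerate}
\item By \cite{Deprez12}, $L^\infty(Y)$ is the unique Cartan subalgebra of $M_n$ up to unitary conjugacy.
\item The orbit equivalence relation $\cR$ of $\Lambda \actson Y$ is cocycle superrigid. Moreover $H^1(\cR,\T)$ is trivial, since $\Lambda=\SL(m,\Z)$ has trivial abelianization. Via Feldman--Moore this gives $\Out(M_n)=\Aut(\cR)/[\cR]$.
\item The superrigidity further shows that every automorphism of $\cR$ is, modulo $[\cR]$, an element of $\Aut_{\ns}(\Lambda\actson Y)$. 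This group is $\GL(n,\R)$ acting by right multiplication $y\mapsto yA$, using that the centralizer of $\SL(m,\Z)$ in $\GL(m,\R)$ is only scalars, which are absorbed.
\item Only the trivial element of $\GL(n,\R)$ lies in $[\cR]$, so $\Out(M_n)\cong \GL(n,\R)$. This is a homeomorphism because a continuous bijective homomorphism between Polish groups is open.
\item Since $y\mapsto yA$ on $M_{m,n}(\R)\cong(\R^n)^m$ scales $\nu$ by $|\det A|^{m}$, we get $\md(A)=|\det A|^{\pm m}$.
\end{enumerate}
This step, and in particular steps 2 and 3, is where I expect the main difficulty. The realistic plan is to cite Deprez's computation of $\Out$ of the II$_1$ corner and pass to the II$_\infty$ factor through the exact sequence $1\to\Out(pMp)\to\Out(M)\to\cF(pMp)\to 1$, or alternatively to rerun his proof verbatim for the infinite measure space.

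\textbf{Step 2: applying the dichotomy machinery.} Since $\md$ is onto $\R^*_+$, we have $\cF(pM_np)=\R^*_+$, and $\delta=\mp m\log|\det|$ is surjective. Proposition \ref{Prop: flows and splits} then gives $|\Flow(M_n)/\sim_\cc| = |\mathfrak{S}(\delta)/\GL(n,\R)|$.
\begin{enumerate}
\item For $n\ge 2$, $d\delta(Z)=\mp m\Tr(Z)$, so $\mk=\ker(d\delta)=\mathfrak{sl}(n,\R)$. This is simple and hence not solvable.
\item Passing to the identity component by Lemma \ref{Lemma: connected component enough}, Proposition \ref{Prop: number of orbits of a} together with Proposition \ref{Prop: solvable radical nontrivial} yields $2^{\aleph_0}$ orbits. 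Concretely, elements $\frac{1}{nm}I + cH$ with $H$ diagonal traceless are pairwise non-conjugate because their Killing form values differ.
\end{enumerate}

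\textbf{Step 3: the case $n=1$.} Here $G=\R^*$ and $\delta(a)=\mp\log|a|$. Any continuous splitting lands in $G^\circ=\R^*_+$, on which $\delta$ is an isomorphism onto $\R$. Hence $\mathfrak{S}(\delta)$ is the single element $(\delta|_{G^\circ})^{-1}$, and $|\Flow(M_1)/\sim_\cc|=1$, exactly as in Proposition \ref{Prop: Chakraborty construction}.
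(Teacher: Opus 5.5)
This is essentially the paper's proof. Your Step 1 fallback is exactly what the paper does: Deprez's Theorem C, applied with $R_{\lambda I}$ having module $\lambda^{mn}$, gives $\cF(B_n)=\R^*_+$; his Theorem 8.4 gives $\Out(B_n)=\SL^{\pm}(n,\R)$, realized by the $R_A$; and the central splitting $t\mapsto[R_{t^{1/mn}I}]$ of $1\to\SL^{\pm}(n,\R)\to\Out(M_n)\to\R^*_+\to 1$ yields $\Out(M_n)\cong\GL(n,\R)$. Your Steps 2--3 also match the paper's endgame, and your explicit computation $\mk=\mathfrak{sl}(n,\R)$ makes the appeal to Proposition \ref{Prop: solvable radical nontrivial} more transparent than the paper's citation of Theorem \ref{Thm: main thm dichotomy}.

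You should drop the direct route in items 1--4, though. The paper never identifies $M_n$ with $L^\infty(Y)\rtimes\Lambda$; it uses Deprez's factors as a black box, and Cartan uniqueness and cocycle superrigidity for that crossed product are not what is being cited. Moreover, your Schur-type argument only shows that \emph{linear} maps commuting with $\Lambda$ are right multiplications, not arbitrary non-singular ones.
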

\begin{proof}
    Let $\Lambda \actson Y$ as above and let $M_n$ be the II$_\infty$ factors constructed in \cite[Theorem 8.4]{Deprez12} for $n \geq 1$. Let $p$ be a finite projection in $M_n$ and let $B_n = pM_n p$ be a finite corner. As observed in \cite[Theorem 8.4]{Deprez12}, by cocycle superrigidity of the action and the fact that $\SL(n,\Z)$ is a perfect group, $H^1(\Lambda \actson Y, \T) = \{e\}$. For each $A \in \GL(n,\R)$, let $R_A$ denote the right multiplication $R_A(y) = yA$. Since $R_A$ is invertible (with inverse $R_{A^{-1}}$), we get that $R_A$ is non-singular. Since $\Lambda \actson Y$ is by left multiplication, each $R_A$ commutes with $\Lambda \actson Y$ and so $R_A \in \Aut_{\ns}(\Lambda \actson Y)$. Notice that the Jacobian is given by $|\det(R_A)| = |\det(A)|^m$ and hence for any Borel subset $E \subset Y$, we get $\nu(R_A(E)) = |\det(A)|^m \nu(E)$. Thus $R_A$ is in the centralizer of $\Lambda$ in $\Aut_{\ns}(\Lambda \actson Y)$ for all $A \in \GL(n,\R)$. Now for $\lambda> 0$, taking $A_{\lambda} = \lambda I$, we get that $\md(R_{A_\lambda}) = \lambda^{mn}$. Thus we have $\md(\{R_{A_\lambda} \; | \; \lambda > 0\}) = \R^*_+$. Hence by \cite[Theorem C]{Deprez12}, we get $\cF(B_n) = \R^*_+$. Moreover as observed in \cite[Theorem 8.4]{Deprez12}, $\Out(B_n) = \SL^{\pm}(n,\R) = \{R_A \; | \; A \in \SL^{\pm}(n,\R)\}$. Now consider the standard exact sequence: 
    \begin{align*}
        1 \rightarrow \Out(B_n) \rightarrow \Out(M_n) \rightarrow \cF(B_n) \rightarrow 1
    \end{align*}
    which in our case becomes: 
    \begin{align}
    \label{eq: short exact seq}
        1 \rightarrow \SL^{\pm}(n,\R) \rightarrow \Out(M_n) \rightarrow \R^*_+ \rightarrow 1
    \end{align}
    Letting $s: \R^*_+ \rightarrow \Out(M_N)$ denote the map $s(t) = [R_{A_{t^{1/mn}}}]$, notice that $\md(s(t)) = (t^{1/mn})^{mn} = t$ and $s$ is clearly continuous, so Equation \ref{eq: short exact seq} splits. In particular letting $\delta = - \log (\md)$, we get that $\mathfrak{S}(\delta)$ is non-empty. Moreover notice that $s(t)R_As(t)^{-1} = R_A$ for all $A \in \SL^{\pm}(n,\R)$ and hence we get a homeomorphism $\Out(M_n) \cong \SL^{\pm}(n,\R) \times \R^*_+$. Since $\SL^{\pm}(n,\R) \times \R^*_+$ is homeomorphic to $\GL(n,\R)$, we have that $\Out(M_n) \cong \GL(n,\R)$ is a Lie group. 
    
    Since the fundamental group of $B_n$ is $\R^*_+$ and $\mathfrak{S}(\delta)$ is non-empty, by Proposition \ref{Prop: flows and splits}, we get that $\Flow(M_n)$ is non-empty, i.e., $M_n$ admits at least one continuous trace-scaling flow. For $n \geq 2$, notice that $K = \ker(\delta) = \SL^{\pm}(n,\R)$ and the connected component of the identity is $K^\circ = \SL(n,\R)$. Since $\SL(n,\R)$ is semisimple, its Lie algebra has trivial solvable radical. Thus by Theorem \ref{Thm: main thm dichotomy}, we have that $|\Flow(M_n)/ \sim_{\cc}| = 2^{\aleph_0}$. When $n = 1$, $\Out(M_1) = \{\pm 1\} \times \R^*_+ \cong \R^*$ and $K = K^\circ = \{e \}$. One checks that $\delta$ is given by $\delta(\epsilon, t) = - \log(t)$ where $\epsilon \in \{\pm 1\}$ and $t \in \R^*_+$. Notice that the image of a continuous homomorphism $s: \R^*_+ \rightarrow \{\pm 1 \} \times \R^*_+$ has its image in $\R^*_+$ by connectedness. Thus there is exactly one split $s \in \mathfrak{S}(\delta)$ given by $s(t) = (1,e^{-t})$ and by Proposition $\ref{Prop: flows and splits}$, $|\Flow(M_1) / \sim_{\cc}| = 1$, as required.    
\end{proof}    
    
\section{Other examples of pairwise distinct trace-scaling flows}
\label{Sec: other examples}

In this section we give two different constructions of a continuum of pairwise non-cocycle conjugate continuous trace-scaling flows on full II$_\infty$ factors.

\subsection{Flows on certain tensor products}
\label{Subsec: tensor products}
    Recall that for a strongly continuous flow $\R \actson M$ on a semifinite factor, there is a unique continuous homomorphism $\rho: \R \rightarrow \R^*_+$ such that $\tau_M \circ \alpha_t = \rho(t)\tau_M$. In this case $\rho(t) = e^{\md(\alpha)t}$ for a unique real number $\md(\alpha)$ which is called the \textit{Connes-Takesaki module of the flow}. We make the following adaptation of the definition. 
    \medskip
    \begin{definition}
        Let $\alpha: \R \actson M$ is a strongly continuous flow on a semifinite factor. Let $M = A_1 \otimes A_2$ for semifinite factors $A_1$ and $A_2$ such that both $A_1$ and $A_2$ are $\alpha$-invariant. If $\tau_{A_1}$ is a faithful normal semifinite trace on $A_1$, then there is a unique continuous homomorphism $\rho: \R \rightarrow \R^*_+$ such that $\tau_{A_1} \circ \beta_t = \rho(t) \tau_{A_1}$. Thus $\rho$ is given by $\rho(t) = e^{-\md_\alpha(A_1)t}$ for a unique real number $\md_\alpha(A_1)$. We call this the \textit{partial module of $\alpha$ with respect to $A_1$}. 
    \end{definition}
    \medskip
    \begin{lemma}
    \label{Lemma:unitaries preserve the module}
        Let $\alpha: \R \actson M$ is a strongly continuous flow on a semifinite factor. Suppose $M = A_1 \otimes A_2 = B_1 \otimes B_2$ be two decompositions into semifinite factors where each factor is $\alpha$-invariant. Suppose that $uA_1u^* = B_1$ for a unitary $u \in M$, then $\md_\alpha(A_1) = \md_\alpha(B_1)$. 
    \end{lemma}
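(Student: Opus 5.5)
The approach is to transport the restricted trace on $A_1$ to $B_1$ via $\Ad(u)$ and compare how each scales under the flow. Fix a faithful normal semifinite trace $\tau_{A_1}$ on $A_1$. Since $uA_1u^* = B_1$, the map $\Ad(u)|_{A_1}: A_1 \rightarrow B_1$ is an isomorphism. Hence $\tau_{B_1} := \tau_{A_1} \circ \Ad(u^*)|_{B_1}$ is a faithful normal semifinite trace on $B_1$. Since $B_1$ is a factor, any other such trace is a positive multiple of this one, and the partial module does not depend on the choice. So it suffices to compute $\md_\alpha(B_1)$ with $\tau_{B_1}$.

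For $x \in (B_1)_+$ we want to compare $\tau_{B_1}(\alpha_t(x)) = \tau_{A_1}(u^*\alpha_t(x)u)$ with $\tau_{A_1}(\alpha_t(u^*xu))$. The two arguments differ by conjugation by the unitary $w_t := u^*\alpha_t(u) \in M$:
\begin{align*}
u^*\alpha_t(x)u = w_t\, \alpha_t(u^*xu)\, w_t^*.
\end{align*}
Here $\alpha_t(u^*xu) \in A_1$ by $\alpha$-invariance, and $u^*\alpha_t(x)u \in u^*B_1u = A_1$. The key problem is therefore the following: $\Ad(w_t)$ maps $A_1$ onto $A_1$, since it is the composite $A_1 \to \alpha_t(B_1)=B_1 \to A_1$. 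We need to show that $\Ad(w_t)|_{A_1}$ preserves $\tau_{A_1}$.

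This is the main obstacle, because $w_t$ need not lie in $A_1$. My approach is to use the decomposition $M = A_1 \otimes A_2$. The automorphism $\Ad(w_t)$ of $M$ leaves $A_1$ invariant, hence it also leaves $A_1' \cap M = A_2$ invariant. It therefore splits as $\Ad(w_t) = \gamma_1 \otimes \gamma_2$ with $\gamma_i \in \Aut(A_i)$. Since $\Ad(w_t)$ is inner, it preserves the trace $\tau_{A_1}\otimes\tau_{A_2}$ on $M$, so $\md(\gamma_1)\md(\gamma_2) = 1$.

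A priori this only gives the product of the two modules. To get $\md(\gamma_1) = 1$, I would use the path $t \mapsto w_t$, which is continuous with $w_0 = 1$. This makes $t\mapsto \md(\gamma_1^{(t)})$ a continuous homomorphism-like quantity. A cleaner route, however, is to avoid $\gamma_1$ altogether, as follows:
\begin{enumerate}
\item By the defining property, $\tau_{A_1}\circ\alpha_t = e^{-\md_\alpha(A_1)t}\tau_{A_1}$ on $A_1$ and $\tau_{A_2}\circ \alpha_t = e^{-\md_\alpha(A_2)t}\tau_{A_2}$ on $A_2$. The same holds for $B_1, B_2$ with the transported traces, where $\tau_{B_2}$ is defined analogously from $\tau_{A_2}$ using $uA_2u^*$.
\item Observe that $u A_2 u^* = u(A_1'\cap M)u^* = B_1'\cap M = B_2$. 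So $\Ad(u)$ carries the pair $(A_1,A_2)$ onto $(B_1,B_2)$, and $\tau_M\circ \Ad(u)=\tau_M$.
\item The flow $\beta_t := \Ad(u^*)\circ\alpha_t\circ\Ad(u) = \Ad(w_t)\circ\alpha_t$ preserves $A_1$ and $A_2$. Its partial modules are exactly $\md_\alpha(B_1)$ and $\md_\alpha(B_2)$.
\end{enumerate}
So the claim reduces to showing that perturbing $\alpha$ by the continuous unitary path $w_t$ (an $\alpha$-cocycle) does not change the partial module on $A_1$. Here $\Ad(w_t)$ restricts to $\gamma^{(t)}_1\otimes\gamma^{(t)}_2$, and $t\mapsto \md(\gamma_1^{(t)})$ is a continuous homomorphism $\R\to\R^*_+$, because $w$ is a cocycle and the partial modules of $\alpha$ and $\beta$ are both homomorphisms.

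I would then argue that such a homomorphism is trivial: $\log \md(\gamma_1^{(t)}) = (\md_\alpha(A_1)-\md_\alpha(B_1))t$. For a fixed $t$, the automorphism $\gamma_1^{(t)}\otimes\gamma_2^{(t)}$ is inner. I would try to show that an inner automorphism of $A_1\otimes A_2$ preserving both factors restricts to trace-preserving maps on each factor. The idea is to cut down by finite projections $e\in A_1$, $f\in A_2$ and compare traces of $e\otimes f$ and $\gamma_1(e)\otimes f'$ for suitable $f'$, using that $w_t (e\otimes 1) w_t^*=\gamma_1(e)\otimes 1$ forces $e\otimes 1\sim \gamma_1(e)\otimes 1$ in $M$. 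This comparison of infinite projections is exactly the delicate step: both are infinite when $A_2$ is II$_\infty$. So I would instead conjugate $e\otimes f$ for a finite $f$ and use the structure of $\Ad(w_t)$ to get $\tau_{A_1}(\gamma_1(e))\tau_{A_2}(\gamma_2(f)) = \tau_{A_1}(e)\tau_{A_2}(f)$. If that only yields the product relation, I would fall back on continuity in $t$ together with a separate argument that $\md(\gamma_1^{(t)})$ takes countably many values, which would force it to be constant equal to $1$.
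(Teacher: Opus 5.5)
Your reduction is the same as the paper's. With $w_t=u^*\alpha_t(u)$ you correctly show that $u^*\alpha_t(x)u=w_t\,\alpha_t(u^*xu)\,w_t^*$. You also show that $\Ad(w_t)$ leaves $A_1$ and $A_2=A_1'\cap M$ invariant and splits as $\gamma_1^{(t)}\otimes\gamma_2^{(t)}$. The whole lemma therefore reduces to proving $\tau_{A_1}\circ\gamma_1^{(t)}=\tau_{A_1}$, and this is exactly the step you leave unproved.

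Invariance of $\tau_{A_1}\otimes\tau_{A_2}$ under the inner automorphism $\Ad(w_t)$ only gives $\md(\gamma_1^{(t)})\md(\gamma_2^{(t)})=1$. This is useless precisely when $A_1$ and $A_2$ are both of type II$_\infty$; if either factor is finite or of type I, one of the two modules is automatically $1$. The II$_\infty$ case is the one needed in the application to $N_0\otimes N_0$ with $N_0=L(G)\otimes\cB(\cH)$.

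Your fallbacks do not close the gap:
\begin{itemize}
\item Comparing $e\otimes 1$ with $\gamma_1^{(t)}(e)\otimes 1$ compares infinite projections. Using $e\otimes f$ instead just returns the same product identity.
\item That $t\mapsto\md(\gamma_1^{(t)})$ is a continuous homomorphism is only a reformulation. It equals $e^{(\md_\alpha(A_1)-\md_\alpha(B_1))t}$, so its triviality is the statement itself.
\item There is no reason for it to take countably many values. Its values lie in the fundamental group of a finite corner of $A_1$, which in the intended application is all of $\R^*_+$.
\end{itemize}

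The missing ingredient, which the paper cites from Kallman, is this: if $\gamma_1\otimes\gamma_2$ is an inner automorphism of $A_1\otimes A_2$ with $A_1,A_2$ factors, then each $\gamma_i$ is inner. It has a short direct proof.
\begin{itemize}
\item Since $\Ad(w_t)(x\otimes 1)=\gamma_1^{(t)}(x)\otimes 1$, we have $w_t(x\otimes 1)=(\gamma_1^{(t)}(x)\otimes 1)w_t$ for $x\in A_1$.
\item Choose $\varphi\in (A_2)_*$ with $a=(\id\otimes\varphi)(w_t)\neq 0$. The $A_1$-bimodularity of the slice map gives $ax=\gamma_1^{(t)}(x)a$ for all $x\in A_1$.
\item Hence $a^*a\in A_1'\cap A_1=\C$. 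Since $\gamma_1^{(t)}$ is onto $A_1$, also $aa^*\in\C$.
\item So $a/\|a\|$ is a unitary of $A_1$ implementing $\gamma_1^{(t)}$.
\end{itemize}
Thus $\gamma_1^{(t)}$ preserves $\tau_{A_1}$ for each fixed $t$. Your computation then finishes the proof: $\tau_{B_1}(\alpha_t(x))=\tau_{A_1}(\alpha_t(u^*xu))=e^{-\md_\alpha(A_1)t}\tau_{B_1}(x)$. No continuity argument in $t$ is needed.
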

    \begin{proof}
        Let $w_t = u^* \alpha_t(u)$ and notice that since the factors are $\alpha$-invariant and since $\Ad(u)(A_1) = B_1$, we get that $A_1$ is $\Ad(w_t)$ invariant. Since $A_1$ and $A_2$ are factors, the automorphism $\Ad(w_t) \in \Aut(M)$ preserves the relative commutant of $A_1$ which is $A_2$. Thus $\Ad(w_t) = \delta_t \otimes \eta_t$ where $\delta_t \in \Aut(A_1)$ and $\eta_t \in \Aut(A_2)$. Since $\Ad(w_t)$ is inner in $M$, this implies that $\eta_t$ and $\delta_t$ are both inner in $A_1$ and $A_2$ respectively by \cite[Corollary 1.14]{Kallman}. 
        
        Thus $\Ad(w_t)$ preserves all faithful normal semifinite traces on $A_1$. Therefore we get $\md(\delta) = \md_{\alpha}(A_1)$. On the other hand $\Ad(u): A_1 \rightarrow B_1$ satisfies $\tau_{B_1} \circ \Ad(u) = c\tau(A_1)$ for some $c > 0$. Then we calculate:
        \begin{align*}
            c\tau_{A_1}(\delta_t(x)) &= \tau_{B_1} (\Ad(u) \circ \delta_t(x)) = \tau_{B_1}(\beta_t \circ \Ad(u)(x)) \\ &= e^{-\md_{\alpha}(B_1)t} \tau_{B_1}(\Ad(u)x) = c e^{-\md_{\alpha}(B_1)t} \tau_{A_1}(x) 
        \end{align*}
        Thus we also get $\md_\alpha(B_1) = \md(\delta)$, completing the proof. 
    \end{proof}
    \begin{proof}[Proof of Theorem \ref{Thm: main thm tensor case}]
        Let $G$ be in class $\mathcal{C}$ and let $\tau$ be the semifinite trace on $N_0 = L(G) \otimes \cB(\cH)$. By hypotheses, let $\beta: \R \actson N$ such that $\tau(\beta_t(x)) = e^{-t}\tau(x)$ for all $x \in N$ and $t \in \R$. Let $N = N_0 \otimes N_0 \cong L(G \times G) \otimes \cB(\cH)$. We know that amplifications of full factors are full, and tensor products of full factors are full by \cite{HoudayerMarrakchiVerraedt}. Since $G$ is in class $\mathcal{C}$, $L(G)$ is full and hence $N$ is full. Now for $s \in \R$, define the following flows: 
    \begin{align*}
        \alpha^s: \R \actson N_0 \otimes N_0, \;\; \alpha^s_t = \beta_{st} \otimes \beta_{(1-s)t}.  
    \end{align*}
    For each $s$, the flow $\alpha^s$ clearly remains continuous. Notice that they are trace-scaling as well because:
    \begin{align*}
       (\tau \otimes \tau)( \alpha^s_t(x\otimes y)) &= \tau(\beta_{st}(x)) \otimes \tau(\beta_{(1-s)t}(y))\\ &= e^{-st}\tau(x) \otimes e^{-(1-s)t}\tau(y) \\ &= e^{-t}(\tau \otimes \tau) (x \otimes y)
    \end{align*}
    For each $s \in \R$, let $M_s$ be the fixed point subalgebra $N^{\alpha^s}$ of the flow $\alpha^s$. By the continuous decomposition theorem of Takesaki, the continuous core of $M_s$ is isomorphic to $N$ for each $s$. Since $N$ is a type II$_\infty$ factor, $M_s$ is a type III$_1$ factor for all $s \in \R$. By \cite[Corollary C]{Marrakchi}, $M_s$ is a full factor for all $s \in \R$. 

    We claim that $M_s \cong M_r$ if and only if $s = r$ or $s = 1-r$. For the \textit{only if} direction, suppose that $M_s \cong M_r$. By the usual Takesaki duality, the canonical extension of this isomorphism gives an isomorphism $\Phi \in \Aut(N)$ that intertwines the canonical dual flows. Thus $\Phi$ satisfies, $\Phi \circ \alpha^s = \alpha^r \circ \Phi$. By abusing notation, let $N_1 \coloneqq N_0$ and $N_2 \coloneqq N_0$ be the canonical tensor factors in $N = N_1 \otimes N_2$. For the flow $\alpha^s$ we know that $N_1$ and $N_2$ are invariant. Notice that $\md_{\alpha^s}(N_1) = s$ and $\md_{\alpha^s}(N_1) = 1-s$. Now let $D_1 = \Phi(N_1)$ and $D_2 = \Phi(N_2)$ so that we have $N = D_1 \otimes D_2$. Since $\Phi$ is $\R$-equivariant, this gives $\md_{\alpha^r}(D_1) = s$ and $\md_{\alpha^r}(D_2) = 1-s$. 

    Now since $N = N_1 \otimes N_2 = D_1 \otimes D_2$, by Proposition \ref{Prop: unique prime factors for semifinite factors}, there is a unitary $u \in \cU(N)$ such that $uD_1 u^* = N_1$ or $uD_1 u^* = N_2$. By Lemma \ref{Lemma:unitaries preserve the module}, $\md_{\alpha^r}(D_1) = \md_{\alpha^r}(N_1) = r$ or $\md_{\alpha^r}(D_1) = \md_{\alpha^r}(N_2) = 1-r$. Thus either $s = r$ or $s = 1-r$, proving the \textit{only if} direction. The only thing left to show in the \textit{if} direction is $M_s \cong M_{1-s}$, and this follows easily from the equivariant isomorphism between $\alpha^s$ and $\alpha^{1-s}$ given by the flip map $N_0 \otimes N_0 \rightarrow N_0 \otimes N_0$.  
    \end{proof}

\section{Flows on a prime semifinite factor}
\label{Subsec: prime factors}
\begin{definition}
\label{Def: free Araki woods construction}
    Let $\nu$ be a finite symmetric Borel measure on $\R$ which is equivalent to the Lebesgue measure. For $a>0$, set $\mu[a] = \nu + \delta_a + \delta_{-a}$. Let $(M_a,\phi_a)$ be the free Araki-Woods factor $\Gamma(\mu[a],1)''$, where 1 is the constant multiplicity function, together with its usual free quasi-free state. By \cite[Theorem 2.11]{Shlyakhtenko97}, we get that $(M_a, \phi_a) \cong (T_\lambda, \psi_\lambda) \ast (R_{\reg}, \phi_{\reg})$ (following the notation of Example \ref{Example: Free araki woods examples}). 
\end{definition}

Every Borel probability measure $\eta$ on $\R$ can be written uniquely as $\eta_{\at} + \eta_{\ct}$ where $\eta_{\ct}$ is the continuous part and $\eta_{\at}$ is the atomic part. Recall that a symmetric Borel probability measure $\eta$ on $\R$ is said to be in the class $\cS(\R)$ introduced by Houdayer, Shlyakhtenko and Vaes in \cite{HouShlyaVaes} if $\eta_{\ct} \ast \eta_{\ct} < \eta_{\ct}$ and $\eta_{\at} \neq 0$ and $\supp(\eta_{\at}) \neq \{0\}$.   
\medskip
\begin{lemma}
\label{Lemma: non isomorphic factors}
    The factors $(M_a)_{a>0}$ from Definition \ref{Def: free Araki woods construction} are all type III$_1$ full factors and are pairwise non-isomorphic. 
\end{lemma}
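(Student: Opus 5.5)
Type III$_1$: by \cite{Shlyakhtenko97} (Theorem 6.10 and the surrounding results), a free Araki--Woods factor $\Gamma(\mu,m)''$ is of type III$_1$ as soon as the subgroup of $\R^*_+$ generated by the atoms of $\mu$ is dense, or $\mu$ has a non-zero continuous part. Since $\mu[a]$ contains $\nu$, which is equivalent to Lebesgue measure, each $M_a$ is of type III$_1$. Alternatively, $M_a \cong T_\lambda \ast R_{\reg}$ with $R_{\reg}$ of type III$_1$, and free products of this kind remain III$_1$.

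Fullness: I would cite the result of Houdayer that free Araki--Woods factors $\Gamma(\mu,m)''$ are full whenever the representation is not trivial on a subspace of dimension one. One could also use the free product $T_\lambda \ast R_{\reg}$ together with Ueda's fullness criteria for free products with respect to faithful normal states.

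Pairwise non-isomorphism: I would use the HSV classification \cite{HouShlyaVaes}. For $\Gamma(\mu,m)''$ with $\mu \in \cS(\R)$, they introduce an invariant which records the atomic part of $\mu$, essentially the subgroup or set of eigenvalues of the almost periodic part, or equivalently the $\tau$-invariant or Sd-type invariant of the factor. They show that for $\mu_1,\mu_2 \in \cS(\R)$ the factors are isomorphic if and only if the measure classes agree modulo convolution-type equivalence, and in particular the atomic parts must agree. I would proceed in the following order.
\begin{enumerate}
\item Normalize $\mu[a]$ to a probability measure and check that it lies in $\cS(\R)$. Here $\eta_{\ct} \sim \nu \sim$ Lebesgue, so $\eta_{\ct} \ast \eta_{\ct} \sim$ Lebesgue $\prec \eta_{\ct}$. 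Also $\eta_{\at} = \delta_a + \delta_{-a} \neq 0$ and $\supp(\eta_{\at}) = \{\pm a\} \neq \{0\}$.
\item Apply the HSV theorem: $M_a \cong M_b$ forces the atomic parts of $\mu[a]$ and $\mu[b]$ to generate the same data, namely the same group $\langle a \rangle = a\Z$ versus $b\Z$, or the same set $\{\pm a\}$. This gives $a = b$, since both are positive.
\end{enumerate}

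The main obstacle is step 2: recalling precisely which invariant HSV prove is complete on $\cS(\R)$. I expect it to be that the factors are isomorphic iff $\mu_1 \sim \mu_2$ up to the equivalence generated by convolution with the continuous part, which preserves the atomic group. I would first try to extract from their main theorem that the group generated by the atoms is an isomorphism invariant. That already suffices, since $a\Z = b\Z$ with $a,b>0$ gives $a=b$.
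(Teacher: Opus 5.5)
Your proposal is correct. The non-isomorphism part follows the paper's argument: verify $\mu[a]\in\cS(\R)$ and apply \cite[Corollary B]{HouShlyaVaes}. In that result the group $\Lambda(\mu[a]_{\at})=a\Z$ generated by the atoms is part of the invariant, so $M_a\cong M_b$ forces $a\Z=b\Z$, hence $a=b$.

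On the step you flagged as the obstacle, the invariant is the group $a\Z$, not the atom set $\{\pm a\}$. The cleanest way to see this is through HSV's invariance theorem, which requires only that the atomic part be nonzero and not concentrated at $0$. It says the measure class of $\bigvee_{k\geq 1}\mu^{*k}$ is an isomorphism invariant. Convolving with a continuous measure gives a continuous measure, so the atoms of that class are exactly $a\Z$. Used this way, the $\cS(\R)$ check is not even needed, since you only use the ``isomorphism implies equal invariants'' direction. Your remark that $\nu*\nu$ is actually equivalent to Lebesgue measure (its density $f*f$ is everywhere positive) is right, and stronger than the $\nu*\nu\ll\nu$ the paper checks.

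Where you genuinely differ is in showing that $M_a$ is a full III$_1$ factor. You stay inside free Araki--Woods theory:
\begin{itemize}
\item Type III$_1$ comes from Shlyakhtenko's type classification in \cite{Shlyakhtenko97}: the multiplicative group generated by the spectrum of $A$ is all of $\R^*_+$, because $\nu$ has full support.
\item Fullness comes from Houdayer's theorem that $\Gamma(\cH_\R,U)''$ is full whenever $\dim\cH_\R\geq 2$. That is exactly what your ``not trivial on a one-dimensional subspace'' condition amounts to, since a one-dimensional orthogonal representation of $\R$ is automatically trivial.
\end{itemize}
The paper instead passes through the decomposition $M_a\cong T_\lambda * R_{\reg}$ and Ueda's free product results \cite{Ueda11}. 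Diffuseness of both free factors gives fullness and rules out type III$_0$, and Ueda's computation gives $T(M_a)=\{0\}$, which forces III$_1$.

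Your route is more direct: it uses only that $M_a$ is a free Araki--Woods factor with infinite-dimensional $\cH_\R$ and fully supported spectral measure. The paper's route reuses the free product picture it needs anyway in the proof of Theorem \ref{Thm: main thm prime case}, and gets both properties from a single source.
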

\begin{proof}
    Recall that $M_a \cong T_\lambda \ast R_{\reg}$ for $\lambda = e^{-a}$ and by the results in \cite{Shlyakhtenko97}, we know that $T_\lambda$ is a type III$_\lambda$ full factor and $R_{\reg}$ is a type III$_1$ full factor. Since $T_\lambda$ and $R_{\reg}$ are both diffuse, $M_a$ is a full factor which is not of type III$_0$ by \cite[Theorems 3.4 and 4.1]{Ueda11}. One can easily compute the $T$-invariant by \cite[Theorem 3.4]{Ueda11} and see that $T(M_a) = \{0\}$. Since $M_a$ is not of type III$_0$, this forces $M_a$ to be of type III$_1$. 

    We first show that $\mu[a] \in \cS(\R)$. Indeed, the continuous part of $\mu[a]$ is $\nu$ which is equivalent to the Lebesgue measure $\lambda \coloneqq \lambda_\R$. Let us write $d\nu = f d\lambda$ for a function $f \in L^1(\R)_+$. Then $d (\nu \ast \nu) = (f \ast f) \; d \lambda$. Thus if $\lambda(E) = 0$ then $\int_{E} (f \ast f)(x) \; d\lambda(x) = 0$ and hence $\nu \ast \nu \preccurlyeq \lambda \sim \nu$. Therefore $\mu[a] \in \cS(\R)$. The countable subgroup $\Lambda((\mu[a])_{\at}) < \R$ generated by the atoms is $\Lambda((\mu[a])_{\at}) = a\Z$. Let $\delta_{a\Z} = \sum_{n \in \Z} c_n \delta(na)$ with $c_n > 0$ and $\sum_{n} c_n < \infty$, then one checks that $\nu_* \delta_{a\Z} \sim \lambda$. Thus by the classification result \cite[Corollary B]{HouShlyaVaes}, we get that:
    \begin{align*}
        M_a \cong M_b \iff a\Z = b\Z \iff a = b
    \end{align*}
    Thus the family $(M_a)_{a > 0}$ is a pairwise non-isomorphic family of full III$_1$ factors as required.
\end{proof}

We need the following technical lemma to complete the proof of Theorem \ref{Thm: main thm prime case}.
\medskip

\begin{lemma}
\label{Lemma: lemma for connes embeddibility}
    Let $c(T_\lambda)$ be the continuous core of $T_\lambda$. Then there is a trace one projection $p$ of full central support such that $p c(T_\lambda) p \cong L(\F_\infty) \otimes L^\infty(\T)$.  
\end{lemma}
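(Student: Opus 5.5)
Since $\psi_\lambda$ is periodic with period $T = 2\pi/a$, where $a = |\log\lambda|$, I would compute $c(T_\lambda)$ through the discrete core. The modular flow factors through $\T_T = \R/T\Z$, so the centralizer $T_\lambda^{\psi_\lambda}$ is the fixed-point algebra of the periodic modular action. By \cite[Corollary 6.8]{Shlyakhtenko97} (recalled in Example \ref{Example: Free araki woods examples}), the discrete core $T_\lambda \rtimes_{\sigma^{\psi_\lambda}} \T_T \cong L(\F_\infty)\otimes \cB(\cH)$. Moreover, the centralizer $T_\lambda^{\psi_\lambda}$ is itself a II$_1$ factor isomorphic to $L(\F_\infty)$; this is Shlyakhtenko's computation, since the centralizer is a corner of the discrete core.

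I would then use the standard decomposition of the continuous core of a III$_\lambda$ factor with periodic state. Since $\sigma^{\psi_\lambda}_T = \id$, the crossed product $T_\lambda \rtimes_{\sigma} \R$ contains the unitary $\lambda(T)$, and this unitary is central: it commutes with $\pi(T_\lambda)$ because $\sigma_T = \id$, and with $\lambda(\R)$ trivially. Disintegrating over the spectrum of $\lambda(T)$, which is $\T$, gives
\begin{align*}
c(T_\lambda) \cong (T_\lambda \rtimes_\sigma \T_T) \otimes L^\infty(\T) \cong L(\F_\infty)\otimes \cB(\cH) \otimes L^\infty(\T).
\end{align*}
This is the usual identity $c(M) \cong d(M)\bar\otimes L^\infty(\T)$ for a III$_\lambda$ factor with periodic weight, up to twisting the trace by the module along the fibres. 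Concretely, I would conjugate by the unitary $t \mapsto e^{i t \log h}$-type cocycle, or equivalently use Fourier analysis in $\R \to \R/T\Z$ together with the extension $0\to T\Z\to\R\to\T_T\to 0$. The resulting isomorphism identifies $c(T_\lambda)$ with a field over $\T$ whose fibres are all isomorphic to $d(T_\lambda)$. The field is trivial because the connecting automorphism is an automorphism that scales the trace by $\lambda$, and $\cF(L(\F_\infty)) = \R^*_+$ lets one trivialize it measurably.

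Next I would choose the projection. The canonical trace on $c(T_\lambda)$ disintegrates as $\int_\T \tau_z\,dz$, with $\tau_z$ a multiple of the trace on the fibre $L(\F_\infty)\otimes\cB(\cH)$, depending measurably on $z$. I would pick a measurable field of projections $p_z$ in the fibres with $\tau_z(p_z) = 1$, normalized against Haar probability measure on $\T$. Each $p_z$ is nonzero in a factor, so $p = \int p_z$ has full central support and trace one. Then $p\,c(T_\lambda)\,p = \int_\T^\oplus p_z (L(\F_\infty)\otimes \cB(\cH)) p_z$, and each fibre is an amplification $L(\F_\infty)^{t}\cong L(\F_\infty)$ because $\cF(L(\F_\infty))=\R^*_+$. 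A measurable choice of these isomorphisms then gives $L(\F_\infty)\otimes L^\infty(\T)$.

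The main obstacle is the measurable-field step. The trivialization of the twisted field, which is a measurable selection of isomorphisms $L(\F_\infty)^{t}\cong L(\F_\infty)$, requires a Borel selection in the Polish space of isomorphisms. I would handle this with a measurable section theorem (Jankov--von Neumann) applied to the Borel set of pairs $(t,\theta)$ with $\theta$ an isomorphism. Alternatively, and more cleanly, I would choose $p_z$ so that the trace is constant in $z$. Then $p\,c(T_\lambda)\,p \cong p_0\, d(T_\lambda)\, p_0 \otimes L^\infty(\T)$ directly, with $p_0$ a trace-one projection in $d(T_\lambda)$, and the fibre is $L(\F_\infty)$ by Shlyakhtenko.
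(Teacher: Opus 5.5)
Your outline is correct and has the same skeleton as the paper's proof. In both, $\lambda(T)$ is central, $c(T_\lambda)\cong d(T_\lambda)\otimes L^\infty(\T)$, the trace disintegrates as $\int_\T f(z)\,\tau_d\,dm(z)$, one picks a field of projections of total trace one, and $\cF(L(\F_\infty))=\R^*_+$ identifies the fibres.

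\textbf{The splitting.} The paper's version of your ``$e^{it\log h}$-type cocycle'' is as follows. Take a self-adjoint $h\in W^*(\lambda(T))$, with values in $[0,a)$ where $a=|\log\lambda|$, such that $e^{iTh}=\lambda(T)$. Then $v_t=e^{-ith}\lambda(t)$ is a $T$-periodic unitary group implementing $\sigma^{\psi_\lambda}$. So $T_\lambda$ and the $v_t$ generate a copy of $d(T_\lambda)$, and multiplication gives $d(T_\lambda)\otimes W^*(\lambda(T))\cong c(T_\lambda)$. Nothing about $L(\F_\infty)$ enters here. Your sentence attributing the triviality of the field to the trace-scaling connecting automorphism together with $\cF(L(\F_\infty))=\R^*_+$ therefore misplaces the role of the fundamental group. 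That automorphism only affects the dual flow and the density $f$, never the algebra structure.

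\textbf{The corners.} This is where your route genuinely differs. The paper avoids any selection theorem. It fixes a trace-scaling flow $\rho$ on $d(T_\lambda)\cong L(\F_\infty)\otimes\cB(\cH)$ and a trace-one projection $e$, and sets $p(z)=\rho_{\log f(z)}(e)$. The restrictions of $\rho_{\log f(z)}$ are then isomorphisms $e\,d(T_\lambda)\,e\to p(z)\,d(T_\lambda)\,p(z)$ depending measurably on $z$, which gives $p\,c(T_\lambda)\,p\cong e\,d(T_\lambda)\,e\otimes L^\infty(\T)$ immediately.

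Your Jankov--von Neumann route also works, but it is heavier. You need a standard Borel parametrization of isomorphisms between varying corners, and you only get a universally measurable section, which suffices after modification on a null set.

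Your ``cleaner'' alternative $p=p_0\otimes 1$ is actually the simplest of the three, but as written it is incomplete. Since $\Tr(p_0\otimes 1)=\tau_d(p_0)\int_\T f\,dm$, you must show $f\in L^1(\T)$. This does hold:
\begin{itemize}
\item For $0\le s<a$, $\hat\sigma_s$ fixes $T_\lambda$.
\item It multiplies $v_t$ by a central unitary that equals $1$ on the spectral set $\{h\ge s\}$.
\item It sends $h$ to $h-s$ there.
\end{itemize}
So $\Tr\circ\hat\sigma_s=e^{-s}\Tr$ forces $f$ to be a constant multiple of an exponential in the variable $h\in[0,a)$. Hence $f$ is bounded above and below by positive constants. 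With this observation you need neither the flow $\rho$ nor a selection theorem; the single identification $p_0\,d(T_\lambda)\,p_0\cong L(\F_\infty)$ suffices.
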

\begin{proof}
    Since the modular automorphism group $\sigma^{\psi_\lambda}$ is periodic with period $S = - \frac{2\pi}{\log \lambda}$, the action factors through $K = \R/S\Z$. Then $D_\lambda = T_\lambda \rtimes_{\sigma^{\psi_\lambda}} K$ is the discrete core and by \cite[Corollary 6.8]{Shlyakhtenko97}, $D_\lambda \cong L(\F_\infty) \otimes \cB(\cH)$. Let $\{u_t\}_{t \in \R}$ be the canonical unitaries in $c(T_\lambda)$. Since $\Ad(u_S)$ is trivial, $u_S \in Z(c(T_\lambda))$. We claim that $c(T_\lambda)\cong D_\lambda \otimes L^\infty(\T)$. 

    Now let $h$ be a bounded self adjoint operator in $c(T_\lambda)$ such that $e^{i S h} = u_S$ (one can prove the existence of such an element $h$ for example by using Borel functional calculus). Let $v_t = e^{-ith} u_t$ for each $t \in \R$ and notice that $v$ is a continuous one-parameter group of unitaries. Notice that:
    \begin{align*}
        v_S = e^{-iSh}\lambda_S = u_S^*u_S = 1
    \end{align*}
    Hence $v_t$ factors through $K = \R/S\Z$. We denote the unitaries still by $v_k$ for $k \in K$ by abusing notation. Let $D \subset c(T_\lambda)$ be the von Neumann algebra generated by $T_\lambda$ and the unitaries $\{v_{k} \; | \; k \in K\}$. It is easy to see that $\Ad(v_t)(x) = \sigma_t^{\psi_\lambda}(x)$ and therefore $D \cong T_\lambda \rtimes_{\Ad (v_t)} K \cong D_\lambda$. 

    Now let $A = W^*(\{u_S\}) \cong L^\infty(\T)$ and consider the multiplication map $D \otimes A \rightarrow c(T_\lambda)$. This is a normal *-homomorphism and it is surjective because  
    $e^{ith} \in W^*(\{u_S\})$. The kernel of this map lies in $Z(D \times A) = Z(A) = A$. However, the map is injective on $A$ and hence the kernel is trivial. Thus we get an isomorphism and the claim is proven. 

    Now let $m$ denote the normalizer Haar measure on $\T$.and let $\tau_{D_\lambda}$ and $\tau(c(T_\lambda))$ denote the canonical semifinite trace on $D_\lambda$. Let $\Tr$ be the canonical semifinite trace on the continuous core $c(T_\lambda)$ (that is scaled by the dual flow). For convenience we write an element $x \in c(T_\lambda)$ as a function from $c: \T \rightarrow D_\lambda$ using the central decomposition. By the isomorphism we get the disintegration: 
    \begin{align*}
        \Tr(x) = \int_{\T}f(z)\tau_{D_\lambda}(x(z)) \; dm(z) \text{ for a measurable } f \geq 0 \text{ a.e.}
    \end{align*}
    Fix $e \in D_\lambda$ with $\tau_{D_\lambda}(e) = 1$ and let $\rho$ be a trace scaling flow on $D_\lambda$. Let $p(z) = \rho_{\log(f(z))}(e)$ and notice that $p$ defines an element of $c(T_\lambda)$. We calculate: 
    \begin{align*}
        \Tr(p) &= \int_{\T}f(z)\tau_{D_\lambda}(p(z)) \; dm(z) = \int_{\T}f(z)\tau_{D_\lambda}(\rho_{\log(f(z))}(e)) \; dm(z) \\ & = \int_{\T}f(z)e^{-\log(f(z))}\tau_{D_\lambda}(e) \; dm(z) =  \int_{\T} f(z)f(z)^{-1} \; dm(z) =  1
    \end{align*}
    Thus $p$ has full central support and we get $pc(T_\lambda)p = eD_\lambda e \otimes L^\infty(\T)$. Since $e D_\lambda e$ is an amplification of $L(\F_\infty)$ and the fundamental group of $L(\F_\infty)$ is $\R^*_+$, we get that $e D_\lambda e \cong L(\F_\infty)$, thus concluding the proof. 
\end{proof}

\begin{proof}[Proof of Theorem \ref{Thm: main thm prime case}]
    Let $c(M_a)$ be the continuous core of $M_a$. Since $\sigma^{\phi_a}$ restricts to $\sigma^{\psi_\lambda}$ on $T_\lambda$ and $\sigma^{\phi_{\reg}}$ on $R_{\reg}$, one gets as in the proof of \cite[Theorem 5.3]{Shlyakhtenko04} that 
    \begin{align*}
        c(M_a) \cong c(T_\lambda) \ast_{L(\R)} c(R_{\reg})
    \end{align*}
    where the amalgam $L(\R)$ is the canonical copy of $\R$ inside the crossed product. As an application of Shlyakhtenko's $A$-valued semicircular systems for $A = L(\R)$ we obtain by \cite[Theorem 5.1]{Shlyakhtenko04}: 
    \begin{align*}
        c(R_{\reg}) \cong \Phi(L(\R),\Tr) \cong L(\F_\infty) \otimes \cB(\cH) 
    \end{align*}
    In fact if $p$ is a projection in $c(T_\lambda)$ of trace 1, we get exactly as in the proof of \cite[Theorem 5.3]{Shlyakhtenko04}:
    \begin{align*}
        c(T_\lambda) \ast_{L(\R)} \Phi(L(\R),\Tr) \cong \Phi(c(T_\lambda), \Tr) \cong (pc(T_\lambda)p \ast L(\F_\infty) \otimes \cB(\cH))
    \end{align*}
    Now notice that by Lemma \ref{Lemma: lemma for connes embeddibility}, we get a projection $p_\lambda$ of trace 1 such that $p_\lambda c(T_\lambda) p_\lambda \cong L(\F_\infty \ast L^\infty([0,1]))$ and therefore:
    \begin{align*}
        c(M_a) \cong ((L(\F_\infty) \otimes L^\infty([0,1])) \ast L(\F_\infty)) \otimes \cB(\cH) = P 
    \end{align*}
    as required. Thus $c(M_a) \cong P$ for all $a > 0$ and by Lemma \ref{Lemma: non isomorphic factors}, the $M_a$'s are pairwise non-isomorphic. We know already by \cite{Ueda11} that $P$ is full. To show primeness of $P$, it is enough to show primeness of $P_0$ as a II$_1$ factor and this follows from \cite[Corollary 1.3]{ChifanHoudayer}.
\end{proof}

\printbibliography
\end{document}